\documentclass[12pt]{article}
\usepackage{bbm}
 \usepackage{amssymb}
\usepackage{amssymb, amsthm, amsmath, amscd}
\usepackage[usenames,dvipsnames]{color}
\usepackage{soul} 
\usepackage{xcolor} 

\newtheorem{thm}{Theorem}[section]
\newtheorem{lem}[thm]{Lemma}
\newtheorem{prop}[thm]{Proposition}
\newtheorem{cor}[thm]{Corollary}
\theoremstyle{definition}

\theoremstyle{definition}
\newtheorem{df}[thm]{Definition}
\theoremstyle{definition}
\newtheorem{rem}[thm]{Remark}
\newtheorem{nota}[thm]{Notation}
\theoremstyle{definition}

\renewcommand{\phi}{\varphi}

\definecolor{purple}{RGB}{150,10,200} 

\newcommand{\CAs}{$C^*$-algebras}

\newcommand{\Ped}{{\rm Ped}}
\newcommand{\Cu}{{\rm Cu}}
\newcommand{\LAff}{{\rm LAff}}

\newcommand{\QT}{{\rm QT}}

\newcommand{\TQT}{\widetilde{\rm QT}}

\newcommand{\TT}{\widetilde{\rm T}}
\newcommand{\rT}{{\rm T}}

\newcommand{\what}{\widehat} 
\newcommand{\wh}{\widehat} 
\newcommand{\ol}{\overline}  
\newcommand{\nzero}{\setminus\{0\}}

\newcommand{\QTAW}{\overline{{\rm QT}(A)}^w}

\newcommand{\N}{\mathbb{N}}
\newcommand{\K}{\mathbb{K}}
\newcommand{\Z}{\mathbb{Z}}

\newcommand{\R}{\mathbb{R}}
\newcommand{\C}{\mathbb{C}}

\numberwithin{equation}{section}

\renewcommand{\theequation}{e\,\arabic{section}.\arabic{equation}}

\newcommand{\Aff}{\operatorname{Aff}}

\newcommand{\Her}{\mathrm{Her}}

\newcommand{\dt}{\delta}
\newcommand{\ep}{\varepsilon}

\newcommand{\rforal}{\,\,\,{\rm for\,\,\,all}\,\,\,}
\newcommand{\CA}{$C^*$-algebra}

\newcommand{\af}{{\alpha}}
\newcommand{\bt}{{\beta}}

\newcommand{\wtd}{\widetilde}

\newcommand{\beq}{\begin{eqnarray}}
\newcommand{\eneq}{\end{eqnarray}}

\usepackage{amsfonts}
\usepackage{mathrsfs}
\usepackage{textcomp}
\usepackage[all]{xy}
\usepackage{enumerate}
\usepackage{hyperref} 

\title{Divisibility and Real Rank Zero} 
\author{Xuanlong Fu\footnote{Department of Mathematics,  Tongji University. Email: xuanlongfu@tongji.edu.cn}}
\date{ }
\begin{document}

\maketitle

\begin{abstract}
Let $A$ be a simple separable exact $C^*$-algebra that has traces. 
We show the 
following 
existed regularity properties 
are equivalent:

\quad(1) $l^\infty(A)/J_A$ has real rank zero, 
where $J_A$ is the trace kernel ideal.

\quad(2) $A$ is tracially almost divisible. 

\quad(3) $A$ is tracially $m$-almost divisible for some $m\in\N\cup\{0\}.$

\quad(4) $A$ has tracial approximate oscillation zero. 

\quad(5) $A$ has Property (TM).

We also show that for  an algebraically simple separable 
stable rank one \CA\ $B$ with  non-empty  compact ${\rm T}(B)$ 
and locally finite nuclear dimension, 
its uniform tracial completion 
$(\ol B^{\rT(B)}, \rT(B))$ 
is hyperfinite, type ${\rm II_1},$ 
and isomorphic to  $({\cal R}_{\rT(B)},\rT(B))$. 
Furthermore, 
$\ol{B}^{{\rm T}(B)}$ 
is pure, 
has real rank zero and stable rank one, 
and satisfies $\rT (\ol B^{\rT(B)} )= \rT(B).$

Consequently,  
every  simple separable unital 
diagonal AH-algebra $V$
(e.g. Villadsen algebras of the first type) 
has the following 
tracial strict comparison: 
For every $a,b\in V_+,$ if $d_\tau(a)<d_\tau(b)$ 
holds for all traces $\tau\in\rT(V),$ 
then there is a 
sequence $\{r_n\}\subset V$ such that 
$\lim_n\|a-r_n^*br_n\|_{2,\rT(V)}=0.$

\end{abstract}

\section{Introduction} 

Divisibility properties play an important role in the regularity theory of $C^*$-algebras. One of the key features of well-behaved $C^*$-algebras is that they possess nice divisibility properties. 
One source of the modern study of divisibility in \CAs\ can be traced back to 
R{\o}rdam's work 
on 
the structure of UHF-stable $C^*$-algebras (\cite{RorUHF1, RorUHF2}). Subsequently, R{\o}rdam's ideas led to the concept of \emph{approximately divisible}, developed by 
Blackadar-Kumjian-R{\o}rdam in \cite{BKR}. 
In 2007, Lin proposed a tracial version of approximate divisibility \cite[Definition 5.3]{Lin07}. 
In 2013, Hirshberg and Orovitz \cite{HO} introduced tracial $\mathcal{Z}$-absorbing and, using techniques of Matui and Sato, showed that for simple, separable, nuclear, unital $C^*$-algebras, $\mathcal{Z}$-stability is equivalent to tracial $\mathcal{Z}$-absorbing --- a central divisibility property.
Since then, various notions of tracial approximate divisibility have been introduced by several authors, 
including 
Fu--Lin \cite{FL2020, FL2022}, 
Fu--Li--Lin \cite{FLL21}, 
Castillejos--Li--Szab{\'o} \cite{CLS}, and Amini--Golestani--Jamali--Phillips \cite{AGJP}.

The divisibility properties discussed above share a common theme: 
each permits, in one way or another, 
the  tracially large 
completely positive contractive 
(c.p.c.) order zero embedding of matrices of arbitrarily large rank into \emph{suitable} central sequence algebras. 
From this viewpoint, a weaker form of divisibility can be obtained by relaxing the centrality requirement, 
allowing such embeddings into \emph{suitable} 
sequence algebras 
without insisting that the image of the embeddings be central.
Among them are Winter's \emph{almost divisibility} and \emph{tracial almost divisibility} \cite{W12pure}, introduced to show that finite nuclear dimension implies $\mathcal{Z}$-stability; 
Property (TM), introduced by the author together with H.~Lin \cite[Definition 8.1]{FLosc} for the study of stable rank one,
tracial oscillation,  and the surjectivity of $\Gamma$.  
See also Robert--R{\o}rdam (\cite{RobRor}) for more on divisibility properties. 
These divisibility properties are all closely related to another important regularity property: the surjectivity of the canonical map 
$\Gamma: \Cu(A)\to {\rm LAff}_+\left(\TQT(A)\right).$

{{N.~P.~Brown first raised the question of when  $\Gamma$ is  surjective (see the remark after Question 1.1 of \cite{T20}).}} 
The surjectivity of $\Gamma$ 
plays a key role in the computation of the Cuntz semigroups of \CAs.
Brown--Dadarlat--Toms showed that $\Gamma$ is surjective for simple  unital exact finite ${\cal Z}$-stable \CAs\ \cite[5.5]{BPT}. 
Elliott--Santiago--Robert generalized this to 
simple finite ${\cal Z}$-stable \CAs\ \cite[6.8]{ERS}.
H.~Thiel showed that $\Gamma$ is surjective for simple separable unital  \CAs\ of stable rank one (\cite[Theorem 8.11]{T20}), without using strict comparison. 
Antoine--Perera--Robert--Thiel removed the unital assumption in Thiel's result (\cite[Theorem 7.13]{APRT}).

In order to study the relations between stable rank one and other regularity properties, we introduced the notion of tracial approximate oscillation zero in \cite{FLosc} (joint work with Lin)
and  showed  
that when strict comparison is provided, 
stable rank one, 
tracial approximate oscillation zero, 
and Property (TM)
are equivalent.

In another direction, L.~G.~Brown and G.~K.~Pedersen introduced the concept of \emph{real rank zero} \cite{BP91}, which is of  fundamental importance. S.~Zhang's work \cite{Z91}
shows that 
real rank zero implies tracial almost divisibility for simple \CAs. 
The converse of Zhang's theorem does not hold in general, due to the possible lack of projections in simple \CAs. 
Nevertheless, the tracial sequence algebra $l^\infty(A)/J_A$ could be rich in projections,
where $J_A$ is the trace kernel ideal. 
Indeed, our recent work \cite{Fu2025} shows that if we additionally assume that $A$ has stable rank one, then  $l^\infty(A)/J_A$ has real rank zero. 
One of our main results in this paper is that $l^\infty(A)/J_A$ has real rank zero if and only if $A$ is tracially almost divisible (see Theorem \ref{elmy11-10}). 

In \cite{Fu2025}, we introduced \emph{hereditary surjectivity} 
and showed that for simple separable C*-algebras, 
stable rank one implies hereditary surjectivity of $\Gamma,$ 
which in turn implies tracial approximate oscillation zero.
In this paper, we continue the study of the relation between tracial approximate oscillation zero and certain properties of the canonical map $\Gamma$. 
One of our main results is that for exact $C^*$-algebras, 
tracial approximate oscillation zero 
is equivalent to $\Gamma$ being hereditary dense,
which is 
a concept that blends uniform denseness 
with some flavor of strict comparison, 
even though strict comparison itself is absent
(see Definition \ref{X-dense} and Theorem \ref{elmy11-10}).

The last pieces to complete the puzzle are two divisibility properties: \emph{tracial diagonal divisibility} 
(see Definition \ref{elmy20-02}), 
meaning that every positive element is 
close to an 
evenly divided positive element with respect to the 2-norm; 
and 
\emph{weak tracial diagonal divisibility} 
(see Definition \ref{elmy20-01}), 
meaning that every positive element is 
close to an evenly divided positive element  
at the trace level.

With the tools of tracial approximate oscillation zero, 
hereditary denseness, 
tracial diagonal divisibility, 
and weak tracial diagonal divisibility, 
we can now unify various existing regularity properties
in the following theorem: 

\begin{thm}\label{elmy11-10}
Let $A$ be an algebraically simple separable 
non-elementary \CA\ with $\TQT(A)\neq\{0\}.$ Consider the following regularity properties: 

\quad$(1)$ 
$\Gamma: \Cu(A)\to\LAff_+(\TQT(A))$ is hereditary dense (see Definition \ref{X-dense}).

\quad$(2)$ $A$ has tracial approximate oscillation zero. 

\quad$(3)$ $l^\infty(A)/J_A$ has real rank zero. 

\quad$(4)$ $A$ is tracially diagonally divisible
(see Definition \ref{elmy20-02}). 

\quad$(5)$ $A$ has Property (TM). 

\quad$(6)$ $A$ is tracially almost divisible. 

\quad$(7)$ $A$ is tracially $m$-almost divisible 
for some $m\in\N\cup\{0\}.$

\quad$(8)$ $A$  is weakly tracially diagonally divisible 
(see Definition \ref{elmy20-01}).

Then $(1)\Rightarrow(2)\Leftrightarrow(3)\Rightarrow(4)\Rightarrow(5)\Rightarrow (6)\Rightarrow(7)$  always holds. 

If in addition  $A$ is exact, then 
$(7)\Rightarrow(8)\Rightarrow(1)$  holds 
and thus all properties are equivalent. 

\end{thm}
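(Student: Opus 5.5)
The plan is to prove the cycle link by link, keeping track of which links use exactness. Exactness enters only to identify quasitraces with traces, via Haagerup's theorem, so that $\TQT(A)=\TT(A)$ and $2$-norms are subadditive.

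\textbf{The chain $(1)\Rightarrow(2)\Leftrightarrow(3)\Rightarrow(4)\Rightarrow(5)$.}

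For $(1)\Rightarrow(2)$, take $a\in\Ped(A)_+$ and work in the hereditary subalgebra $\Her(a)$. Hereditary denseness lets us approximate the lower semicontinuous function $\wh{a}$ uniformly on the relevant compact set of traces by continuous functions realized as ranks $\wh{\langle b\rangle}$ with $b\in\Her(a)$. The comparison flavour built into the definition should then give $b$ with $d_\tau(b)$ close to $\tau(f(a))$. The tracial oscillation $\omega(b)$ is small exactly when $d_\tau(b)-\tau(f_{1/n}(b))\to0$ uniformly. Since $d_\tau(b)$ is continuous, Dini's theorem gives this uniform convergence, which is tracial approximate oscillation zero.

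For $(2)\Leftrightarrow(3)$, I would use the known characterization from \cite{FLosc, Fu2025}: tracial approximate oscillation zero holds iff every self-adjoint element of $l^\infty(A)/J_A$ is approximated by self-adjoint elements whose spectral projections exist in the quotient. An element of oscillation zero, taken modulo $J_A$, has $f_{1/n}(b)$ converging in $\|\cdot\|_{2}$ to a projection. Applying this on each of the pieces $f_{\epsilon}(a_\pm)$ produces finite-spectrum approximants, so the quotient has real rank zero. Conversely, real rank zero gives lifts of projections, and these yield elements of small oscillation in every hereditary subalgebra.

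For $(3)\Rightarrow(4)$, write $a\in A_+$ as approximately $\sum\lambda_ip_i$ in $l^\infty(A)/J_A$. In a real rank zero quotient of a simple, non-elementary algebra we can subdivide each projection $p_i$ into $k$ mutually equivalent subprojections up to a small remainder. This uses Zhang's theorem applied in the quotient, which is again "simple" at the trace level. Lifting back gives the evenly divided element.

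For $(4)\Rightarrow(5)$, Property (TM) only asks for order zero maps out of $M_k$ that are tracially large, and an evenly divided diagonal supplies exactly these.

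\textbf{The chain $(5)\Rightarrow(6)\Rightarrow(7)$, and closing the cycle.}

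$(5)\Rightarrow(6)$ follows by unwinding Definition 8.1 of \cite{FLosc}. $(6)\Rightarrow(7)$ is trivial: take $m=0$.

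Now assume $A$ is exact. For $(7)\Rightarrow(8)$, we have $m$-almost divisibility, giving $\langle e\rangle\le\langle a\rangle$ with $k(m+1)\langle e\rangle$ dominating up to a tracially small piece. I would iterate this inside $\Her(a)$ to peel off $k$ orthogonal equivalent pieces whose traces agree up to $\epsilon$. Here traces, not quasitraces, are needed to add the errors.

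For $(8)\Rightarrow(1)$, we want hereditary denseness. Given $f\in\Aff_+$ strictly below $\wh{a}$, first approximate $f$ by a simple function. Weak diagonal divisibility provides elements whose ranks are approximately $\tau(a)/k$, uniformly in $\tau$. Sums of these give ranks approximating $f$ uniformly, and all of them lie inside $\Her(a)$.

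\textbf{Main obstacle.}

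I expect $(8)\Rightarrow(1)$ to be hardest. There we must produce genuine Cuntz classes whose rank functions are continuous and uniformly close, while having only trace-level information. I would first try a Dini-plus-compactness argument on $\rT$ of $\Her(a)$, combined with cutting down by $f_\delta$.
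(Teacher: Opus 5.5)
Your cycle has genuine gaps at $(1)\Rightarrow(2)$, $(7)\Rightarrow(8)$ and $(8)\Rightarrow(1)$, which are exactly the links where the paper does its real work.

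\textbf{$(1)\Rightarrow(2)$.} Your argument never reaches the definition. Tracial approximate oscillation zero of $a$ asks for $c\in\Her(a)$ with $\|a-c\|_2<\ep$ and $\omega(c)<\ep$. An element $b\lesssim a$ of small oscillation with prescribed rank gives no control on any $\|a-c\|_2$. (Also, $\wh{[b]}$ is not continuous. Hereditary denseness only makes it uniformly close to some $h\in\Aff_+$, and the right tool is the estimate of Proposition~\ref{20251221-1}, not Dini.) A continuous target such as $\tau(f(a))$ says nothing about where $b$ sits inside $\Her(a)$. On the other hand, hitting $\wh{[a]}$ with a single element is impossible unless $\omega(a)$ is already small: if $|d_\tau(b)-d_\tau(a)|<\eta$ on $\ol{\QT(A)}^w$ and $\omega(b)<\eta$, then $\omega(a)\le 4\eta$.

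The paper instead writes $c\,\wh{[e]}$ (with $c<1$) as a countable sum of continuous $h_n\in X_1$ and realizes each $h_n$ by an element of small oscillation. It then uses Lin's orthogonality lemma (Lemma~\ref{yi04-1}) to place these as mutually orthogonal $b_n\in\Her(e)$ with $\sum_n d_\tau(b_n)$ uniformly close to $d_\tau(e)$ (Theorem~\ref{20251216-1}). Lemma~\ref{elmy05-04} then picks $s_m\in\Her(b_m)$ with $\tau(s_m)\approx d_\tau(b_m)$. Using \cite[Proposition 5.5]{Fu2025} and Dini, it shows that $e^{1/2}(\sum_{m\le n}s_m)e^{1/2}\le e$, which is Cuntz equivalent to $\sum_{m\le n}s_m$, is $\|\cdot\|_2$-close to $e$ for large $n$.

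\textbf{$(8)\Rightarrow(1)$.} This has a parallel hole. Blocks of rank $\approx\tau(a)/k$ only add up to approximate multiples of $\wh a$. Nothing in your sketch produces an element whose trace is a prescribed $f\in\Aff_+(\TQT(A))$ with $f<\wh{[e]}$. The missing step is Proposition~\ref{elmy18-02}: by the Brown--Perera--Toms representation and Cuntz--Pedersen comparison, $f=\wh a$ for some $a\in\Her(e)_+$. Weak divisibility applied to this $a$, together with the $\chi_k$-construction of Proposition~\ref{elmy06-01}, then gives $x_0\in\Her(e)$ with $d_\tau(x_0)\approx f(\tau)$. Theorem~\ref{20251216-1} passes from a single $f$ to $\Sigma(X_1)$. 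This is where exactness is really used: Cuntz--Pedersen is a statement about traces, and Haagerup makes quasitraces traces. Exactness is not needed for subadditivity, since $\|x+y\|_2^{2/3}\le\|x\|_2^{2/3}+\|y\|_2^{2/3}$ already holds for $2$-quasitraces.

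\textbf{$(7)\Rightarrow(8)$.} Your peeling argument has no mechanism. Tracial $m$-almost divisibility, in the sense used here rather than the Cuntz-semigroup inequality you quote, gives an order zero $\phi:M_n\to\Her(a)$ with $\tau(\phi(1_n))\ge\tau(a)/(m+1)-\ep$. Since $\phi(M_n)$ need not commute with $a$, you have no way to split off an orthogonal part of $\Her(a)$ carrying the remaining $\tau(a)$ to iterate on. Moreover, (8) asks for the weighted quantity $\tau(\phi(1_k))\approx\tau(e)$, which rank bookkeeping of equivalent pieces does not control.

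The paper's key input is Tikuisis' theorem (Theorem~\ref{elmy13}). Using exactness and finite nuclear dimension of $C^*(e)$, it yields order zero maps $\psi_i$ that approximately commute with $e$ and satisfy $\tau(e-\psi_i(1_k)e)\to0$ uniformly. Approximate centrality is what makes $x\mapsto\Psi(x)e$ order zero with $\tau(\Psi(1_k)e)\approx\tau(e)$; lifting then gives the required $\phi$.

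\textbf{Smaller points.} In $(3)\Rightarrow(4)$, Zhang's halving theorem does not apply, because $l^\infty(A)/J_A$ is not simple. The paper returns to (2) and uses \cite[Lemmas 7.4, 7.6]{FLosc} (Corollary~\ref{elmy05-cor1}); there oscillation zero supplies full projections of small trace, so the remainder is $\|\cdot\|_2$-small. It then lifts via projectivity of $CM_n$ into $l^\infty(\Her(a))$.

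In $(4)\Rightarrow(5)$, $\|\phi(1_n)-a\|_2$ small does not give $\|\phi(1_n)a-a\|_2$ small. One must pass to the order zero map $f_\theta(\phi)$ (Proposition~\ref{elmy11-07}).

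Your remaining links, $(2)\Leftrightarrow(3)$ by citation, $(5)\Rightarrow(6)$ and $(6)\Rightarrow(7)$, agree with the paper.
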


We should point out that some partial results are already known.  Assuming strict comparison, the equivalence of (2) and (5) was proved in \cite{FLosc}, while Lin \cite{LinJFA} showed that (6) is equivalent to (2) under the same assumption. 
Additionally, A.~Vaccaro's recent paper \cite{Vacc} contains the implication 
$(3)\Rightarrow (6)$ for the unital case.

As an application of Theorem \ref{elmy11-10}, 
we consider 
tracially complete \CAs\ 
--- a powerful tool that 
has been thoroughly  studied in \cite{TraCom}. 
We show (in Theorem \ref{elmy14-08}) 
that for a simple separable 
stable rank one \CA\ $B$ with  compact ${\rm T}(B)$ 
and locally finite nuclear dimension, 
its uniform tracial completion $(\ol B^{\rT(B)},\rT(B))$
is hyperfinite, type ${\rm II_1},$ 
and isomorphic to the hyperfinite model $({\cal R}_{\rT(B)},\rT(B))$. 
Furthermore, 
the uniform tracial completion  of $B$
is pure, 
has real rank zero and stable rank one, 
and satisfies $\rT (\ol B^{\rT(B)} )= \rT(B).$

Villadsen algebras of the first type are well known for their lack of strict comparison.
One interesting consequence of our result is that 
every  simple separable unital 
diagonal AH-algebra $V$ 
--- including  Villadsen algebras of the first type ---
has the following 
\emph{tracial strict comparison} (Corollary \ref{elmy15-03}): 
For every $a,b\in V_+,$ if $d_\tau(a)<d_\tau(b)$ 
holds for all traces $\tau\in\rT(V),$ 
then there is a 
sequence $\{r_n\}\subset V$ such that 
$\lim_n\|a-r_n^*br_n\|_{2,\rT(V)}=0.$

This paper is organized as follows. 
Section~2 sets up basic definitions. 
Section~3 studies tracial approximate oscillation zero and proves a decomposition theorem (Theorem~\ref{eler03-1}). 
Section~4 introduces hereditary denseness, proves a decomposition theorem (Theorem~\ref{20251216-1}), and shows that hereditary denseness implies tracial approximate oscillation zero. 
Section~5 shows that for a c.p.c.~order zero map $\phi: M_n \to A$, $\widehat{\phi(1_n)}$ can be approximated by $\widehat{[x]}$ for some $x$. 
Section~6 shows that strictly positive affine functions on traces are evaluations at positive elements. 
Section~7 introduces tracial diagonal divisibility and weak tracial diagonal divisibility, and shows that weak tracial diagonal divisibility implies hereditary denseness, while tracial approximate oscillation zero implies tracial diagonal divisibility. 
Section~8 shows that tracial almost divisibility implies weak tracial diagonal divisibility. 
Section~9 summarizes all the results and proves Theorem~\ref{elmy11-10}. 
Section~10 gives applications.

{\bf Acknowledgements.} 
This research was supported by
the Fundamental Research Funds for the
Central Universities. 

I would like to thank  
Guihua~Gong, Huaxin~Lin, 
Leonel~Robert, Aaron~Tikuisis, 
and Jianchao~Wu for helpful communications.

\section{Preliminaries}

\begin{nota} 
The set of all positive integers is denoted by $\N.$ 
The set of all non-negative real numbers is denoted by $\R_+.$
The set of all compact operators on a separable 
infinite-dimensional Hilbert 
space is denoted by $\K.$ 
Let $\{e_{i,j}\}$ denote a set of matrix units of $\K.$
%
Let  $(X,d)$ 
be a metric space,  let $x,y\in X,$ let $A,B\subset X,$ and let 
$\ep>0$.
We write $x\approx_{\ep}y$ if
$d(x,y)\le \ep$. 
We write $A\subset_\ep B$ if 
for all $a\in A,$ there is $b\in B$ with $a\approx_\ep b.$


\end{nota}

\begin{nota}
Let $A$ be a $C^*$-algebra. 
Denote by $A^{1}$ the closed unit ball of $A,$ and 
by $A_+$ the set of all positive elements in $A.$
Put $A_+^{1}:=A_+\cap A^{1}.$ 
The set of all self-adjoint elements of $A$ is denoted by $A_{sa}.$ 
Let $a\in A_+.$ Let $\Her_A(a)$ (or just $\Her(a),$ when $A$ is clear)
be the hereditary $C^*$-subalgebra of $A$ generated by $a.$
The Pedersen ideal of $A$ is 
denoted by $\Ped(A),$ which is the minimal dense ideal of $A$ (\cite[5.6]{Pedbk}).  
Let ${\rm Ped}(A)_+= {\rm Ped}(A)\cap A_+,$
${\rm Ped}(A)^{1}= {\rm Ped}(A)\cap A^{1}$ and ${\rm Ped}(A)_+^{1}={\rm Ped}(A)\cap A_+^{1}.$ 

Let $\ep >0.$ Define a continuous function
$f_{\ep} 
: \R
\rightarrow [0,1]$ as following: 
$f_\ep (t)=0$ for $t\in(-\infty,\ep],$ 
$f_\ep (t)=1$ for $t\in[2\ep,+\infty),$
and $f_\ep$ is linear on $[\ep,2\ep].$ 
\end{nota}

\begin{df}\label{Dcuntz}
Let $A$ be a \CA\
and let  $a, b\in A_+.$ 
We write $a \lesssim b$ if there are
$x_k\in A$
such that
$\lim_{k\rightarrow\infty}\|a-x_k^*bx_k\|=0$.
We write $a \sim b$ if $a \lesssim b$ and $b \lesssim a$  both hold. 
The Cuntz relation $\sim$ is an equivalence relation.
Set $\Cu(A)=(A\otimes \K)_+/\sim.$  
For $a\in (A\otimes \K)_+,$ let $[a]$ denote the Cuntz equivalence class corresponding to $a.$ The partial order on $\Cu(A)$ is given by the following: We write $[a]\le [b]$ whenever $a\lesssim b$ holds. 

Let $\iota: \K\otimes M_2(\C)\to \K$ be a $*$-isomorphism, which induces a $*$-isomorphism $\bar \iota:={\rm id}_A\otimes \iota: (A\otimes\K)\otimes M_2(\C)\to A\otimes\K.$ 
{{For $a,b\in (A\otimes \K)_+,$ define $[a]\oplus[b]:=[\bar \iota(a\otimes e_{1,1}+b\otimes e_{2,2})]\in\Cu(A).$}} With this (well-defined) addition, $\Cu(A)$ becomes a semigroup, which is called the Cuntz semigroup of $A.$ 
\end{df} 

\begin{df}
Let $A$ be a \CA. 
Let $\wtd A$ denote the minimal unitization of $A.$ 
$A$ is said to have stable rank one 
if the set of invertible elements in $\wtd A$ is dense in $\wtd A.$
$A$ is said to have real rank zero 
if the set of invertible self-adjoint elements in $\wtd A$
is dense in $\wtd A_{sa}.$
\end{df}

\begin{df}\label{shi01-10}
(\cite[II.1.1]{BH})
Let $A$ be a pre-\CA. 
A quasitrace on $A$ is a map $\tau: A\to\C$ 
such that 
{\bf (1)} $\tau(x^*x)=\tau(xx^*)\ge 0$ for all $x\in A$; 
{\bf (2)} $\tau$ is linear on commutative $^*$-subalgebras of $A$; 
{\bf (3)} $\tau(a+ib)=\tau(a)+i\tau(b)$ for all $a,b\in A_{sa}.$ 

If $\tau$ can be extended to a quasitrace on $M_2(A),$ then $\tau$ is called a 2-quasitrace. 
\end{df}

Adopting the convention in \cite[2.7]{FLosc} and \cite{Fu2025}, we have  the following: 
\begin{df}\label{Dqtr}
Let $A$ be a \CA. 
A densely  defined 2-quasitrace on $A\otimes \K$ is a 2-quasitrace 
$\tau:\Ped(A\otimes \K)\to\C.$ 
Denote by $\TQT(A)$ the set of all densely defined 2-quasitraces 
on $A\otimes \K.$  
Let $\TT(A):=\{\tau\in\TQT(A): \tau\text{ is linear}\}$
be the set of  traces on $\Ped(A\otimes\K).$ 

The partial order on $\TQT(A)$ is the canonical one: 
For $\tau_1,\tau_2\in\TQT(A),$ we write 
$\tau_1\le \tau_2$ if $\tau_1(a)\le \tau_2(a)$ for all $a\in \Ped(A\otimes\K)_+.$
The topology on $\TQT(A)$ is defined by pointwise convergence: 
A net $\{\tau_i\}\subset \TQT(A)$ is converge to $\tau\in\TQT(A)$
if and only if 
$\lim_i\tau_i(a)=\tau(a)$ for all $a\in\Ped(A\otimes\K).$ 
\end{df}

\begin{df}\label{DGamma}
Let $A$ be a \CA\ with $\TQT(A)\neq\{0\}.$
Denote by  $\Aff\left(\TQT(A)\right)$ the set of continuous real valued 
functions $f$  on $\TQT(A)$ 
such that $f(s \tau)=s f(\tau),$ 
$f(\tau+\sigma)=f(\tau)+f(\sigma)$
for all $s\in \R_+$ and $\tau,\sigma\in \TQT(A).$ 
{{Note that if $f\in \Aff\left(\TQT(A)\right),$ then $f(0)=0.$}} 
Moreover, define 
\beq
\hspace{-0.2in}
&\Aff_+\left(\TQT(A)\right):=\left\{f\in \Aff\left(\TQT(A)\right):  f(\tau)>0\mbox{ if }\tau\in \TQT(A)\setminus \{0\}\right\}\cup \{0\},&
 \nonumber\\
&{\rm LAff}_+\left(\TQT(A)\right):=
\left\{f:\TQT(A)\to [0,\infty]: \exists\ \{f_n\}\subset  \Aff_+\left(\TQT(A)\right)\mbox{ with } f_n\nearrow f\right\},&
\nonumber
\eneq 
where $f_n\nearrow f$ means for all $n\in\N$ and all $\tau,$ $f_n(\tau)\le f_{n+1}(\tau),$  and $f(\tau)=\lim_i f_i(\tau).$ 
\end{df}

\begin{df}
For $\tau\in  \TQT(A)$
and  $a\in (A\otimes\K)_+,$ 
we define 
$d_\tau(a):=\lim_{n}\tau(f_{1/n}(a)).$
\end{df} 

\begin{df}\label{shi28-1}
Let $A$ be a \CA. 
For $a\in {{(A\otimes\K)_+}},$ define a map 
$\wh{[a]}:\TQT(A) \to [0, \infty],$ 
$\tau\mapsto d_\tau(a),$ also define a map 
$\wh{a}: \TQT(A) \to [0, \infty],$ $\tau\to \lim_{n\to\infty}\tau((a-1/n)_+).$ 
{{When $a\in\Ped(A\otimes\K)_+,$ 
$\what a(\tau)=\tau(a)$ due to the fact that  quasitraces are automatically lower semicontinuous on the Pedersen ideal
(see \cite[Proposition 2.7]{Fu2025}).}} 
The  canonical map $\Gamma$ is defined as following: 
\beq
\Gamma: \Cu(A)\to {\rm LAff}_+\left(\TQT(A)\right),\quad 
[a]\mapsto \wh{[a]}.
\eneq

\end{df}

Other versions and notations of 2-quasitraces (functionals) appear in the literature (e.g., \cite{ERS}). 
The following commutative diagram clarifies the relationships among these different versions.
See Section 2 of \cite{Fu2025} for further details.

\begin{prop}
{\rm (Proposition 2.18 of \cite{Fu2025})}
\label{jiu25-T1}
Let $A$ be a simple  \CA. 
Then the following diagram is commute, and all the maps are 
ordered affine homeomorphisms. 
$$
\xymatrix{
        \TQT(A) \ar[dr]_{\Delta} \ar[rr]_{\chi} & & \QT_2(A)\setminus\{\tau_\infty\} \ar[dl]^{\Delta}\\
         &  {\rm F}(\Cu(A))\setminus\{d_{\tau_{\infty}}\}& 
    }.
$$
\end{prop}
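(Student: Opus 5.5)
The plan is to make the three maps explicit, check that each is a well-defined bijection compatible with the order and the affine structure, and then compare the topologies. Throughout I use the standard conventions, which the statement does not restate. $\QT_2(A)$ is the cone of lower semicontinuous $[0,\infty]$-valued 2-quasitraces on $A$ (equivalently on $A\otimes\K$, by Blanchard--Kirchberg), with the topology of Elliott--Robert--Santiago. $\tau_\infty$ is the quasitrace taking the value $\infty$ on every nonzero positive element. ${\rm F}(\Cu(A))$ is the cone of functionals on $\Cu(A)$, i.e.\ maps that are additive, order preserving, preserve suprema of increasing sequences and vanish at $0$. Both maps $\Delta$ send a quasitrace $\sigma$ to $d_\sigma$.

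\textbf{The map $\chi$.} For $\tau\in\TQT(A)$ and $a\in(A\otimes\K)_+$ I define
$$\chi(\tau)(a):=\sup_n\tau\bigl((a-1/n)_+\bigr)=\what a(\tau).$$
This makes sense because $(a-1/n)_+\in\Ped(A\otimes\K)$. By the lower semicontinuity of quasitraces on the Pedersen ideal (\cite[Proposition 2.7]{Fu2025}), $\chi(\tau)$ is lower semicontinuous, extends $\tau$, and is a 2-quasitrace. Moreover $\chi(\tau)\neq\tau_\infty$, since $\chi(\tau)$ is finite on $\Ped$; this uses that $\Ped$ is nonzero.

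For the inverse, take $\sigma\in\QT_2(A)\setminus\{\tau_\infty\}$. The set $\{a\ge0:\sigma(a)<\infty\}$ is a hereditary cone, and it is nonzero: some $a\neq0$ has $\sigma(a)<\infty$ because $\sigma\neq\tau_\infty$ and $\sigma$ is lower semicontinuous (pass to $(a-\varepsilon)_+$). The ideal it generates is therefore nonzero, hence dense by simplicity, hence contains $\Ped(A\otimes\K)$. So $\sigma|_{\Ped}\in\TQT(A)$. Lower semicontinuity gives $\chi(\sigma|_{\Ped})=\sigma$, and the two maps are mutually inverse. Affineness and order preservation are immediate from the formula.

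\textbf{The maps $\Delta$ and commutativity.} For $\QT_2(A)\to{\rm F}(\Cu(A))$ I will quote Elliott--Robert--Santiago: $\sigma\mapsto d_\sigma$ is an affine, order-preserving homeomorphism with inverse
$$\lambda\mapsto\Bigl(a\mapsto\int_0^\infty\lambda\bigl([(a-t)_+]\bigr)\,dt\Bigr).$$
This bijection clearly sends $\tau_\infty$ to $d_{\tau_\infty}$, so it restricts to the complements. For $\tau\in\TQT(A)$ we have $f_{1/n}(a)\in\Ped$ and $f_{1/n}(a)\le 1$ in the unitization, so $d_\tau(a)=d_{\chi(\tau)}(a)$. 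This gives commutativity of the triangle, and it follows that the left $\Delta$ is also an affine order isomorphism.

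\textbf{Topologies (the main obstacle).} It remains to show that $\chi$ is a homeomorphism between pointwise convergence on $\Ped(A\otimes\K)$ and the ERS topology. In the ERS topology, $\sigma_i\to\sigma$ means that for all $a\ge0$ and $\varepsilon>0$,
$$\limsup_i\sigma_i\bigl((a-\varepsilon)_+\bigr)\le\sigma(a)\le\liminf_i\sigma_i(a).$$

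\emph{Pointwise convergence implies ERS convergence.} Since $(a-\varepsilon)_+\in\Ped$,
$$\limsup_i\chi(\tau_i)\bigl((a-\varepsilon)_+\bigr)=\tau\bigl((a-\varepsilon)_+\bigr)\le\chi(\tau)(a).$$
Also, for each $n$,
$$\liminf_i\chi(\tau_i)(a)\ge\lim_i\tau_i\bigl((a-1/n)_+\bigr)=\tau\bigl((a-1/n)_+\bigr),$$
and letting $n\to\infty$ gives the other inequality.

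\emph{ERS convergence implies pointwise convergence.} This is the harder direction. For $a\in\Ped_+$, ERS convergence gives $\liminf_i\sigma_i(a)\ge\sigma(a)$, but only the upper bound $\limsup_i\sigma_i((a-\varepsilon)_+)\le\sigma(a)$. To close the gap I pick $e\in\Ped_+^1$ with $ea=a$; such $e$ exists since $\Ped$ is an ideal containing a local unit for $a$ after a small functional-calculus adjustment. Then
$$\sigma_i(a)-\sigma_i\bigl((a-\varepsilon)_+\bigr)\le\varepsilon\,\sigma_i(e),$$
and applying the ERS upper bound to an element $e'\in\Ped_+$ with $(e'-\varepsilon)_+\ge e$ shows that $\limsup_i\sigma_i(e)$ is bounded by $\sigma(e')<\infty$. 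Letting $\varepsilon\to0$ gives $\limsup_i\sigma_i(a)\le\sigma(a)$, hence $\sigma_i(a)\to\sigma(a)$. Linearity on commutative subalgebras, applied to $a_+$ and $a_-$, extends this to self-adjoint $a$ in $\Ped$, and the real/imaginary decomposition then handles all of $\Ped$.

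Combined with the ERS homeomorphism, this shows that every map in the diagram is an ordered affine homeomorphism.
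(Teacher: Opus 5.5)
The paper does not prove this statement; it imports it from \cite{Fu2025}, so I am judging your argument on its own terms. The overall architecture is sound: the formula $\chi(\tau)(a)=\what a(\tau)$, Elliott--Robert--Santiago for the right-hand $\Delta$, commutativity via $f_{1/n}(a)\in\Ped(A\otimes\K)$, and the direction ``pointwise $\Rightarrow$ ERS'' all work. However, the step you rely on for the hard direction is false. A positive element of the Pedersen ideal need not have a local unit, neither in $\Ped(A\otimes\K)$ nor even in $A\otimes\K$.

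Here is a counterexample. Let $A$ be a unital simple infinite-dimensional $C^*$-algebra (say a UHF algebra), let $c_n\in A_+$ be mutually orthogonal with $\|c_n\|=1$, let $\lambda_n\searrow 0$, and put $w:=\sum_n\lambda_n c_n^{1/2}\otimes e_{1,n}\in A\otimes\K$. Then $a:=w^*(1\otimes e_{1,1})w=\sum_n\lambda_n^2c_n\otimes e_{n,n}$ lies in $\Ped(A\otimes\K)_+$, because $1\otimes e_{1,1}$ is a projection and $\Ped(A\otimes\K)$ is an ideal. Suppose $ea=a$ with $e\in A\otimes\K$. Then $e\,g(a)=g(a)$ for every continuous $g$ with $g(0)=0$. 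Choosing $0\le g_n\le 1$ with $g_n(\lambda_n^2)=1$ gives $\|(1\otimes e_{n,n})g_n(a)\|=1$. Hence $\|(1-P_N)e\|\ge 1$ for every $N$, where $P_N:=1\otimes\sum_{j\le N}e_{j,j}$, which contradicts $e\in A\otimes\K$.

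Only cut-downs such as $(a-\delta)_+$ have local units (e.g.\ $f_{\delta/2}(a)$). Your ``functional-calculus adjustment'' does not help, because the quantity you must bound uniformly in $i$ is exactly $\sigma_i(a)-\sigma_i((a-\delta)_+)=\sigma_i(\min(a,\delta))$.

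What is missing is uniform control of the bottom of the spectrum. It comes from Pedersen's description of $\Ped(A\otimes\K)_+$: there are $y_1,\dots,y_k\ge0$ and $\phi_j\in C_c((0,\infty))_+$ with $a\le\sum_j\phi_j(y_j)$. Choose $\psi_j,\theta_j\in C_c((0,\infty))$ with values in $[0,1]$, $\psi_j=1$ on ${\rm supp}\,\phi_j$, and $\theta_j=1$ on ${\rm supp}\,\psi_j$. For lower semicontinuous 2-quasitraces one has
$\sigma_i(\min(a,\delta))\le\delta\,d_{\sigma_i}(a)\le\delta\sum_j d_{\sigma_i}(\phi_j(y_j))\le\delta\sum_j\sigma_i(\psi_j(y_j))\le 2\delta\sum_j\sigma_i((\theta_j(y_j)-1/2)_+)$.
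The ERS upper bound then yields $\limsup_i\sigma_i(\min(a,\delta))\le 2\delta\sum_j\sigma(\theta_j(y_j))$, which is finite since $\theta_j(y_j)\in\Ped(A\otimes\K)$. Letting $\delta\to0$ gives $\limsup_i\sigma_i(a)\le\sigma(a)$.

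Two smaller points also need an actual argument rather than an appeal to \cite[Proposition 2.7]{Fu2025}:
\begin{enumerate}
\item That $\chi(\tau)$ is a lower semicontinuous 2-quasitrace off $\Ped(A\otimes\K)$. Apply the standard lower semicontinuous regularization $a\mapsto\sup_{\ep>0}\tau((a-\ep)_+)$ to $\tau$ extended by $+\infty$ outside the Pedersen ideal.
\item For the inverse map, that $\{a\ge0:\sigma(a)<\infty\}$ is closed under addition (via $\sigma(a+b)\le 2\sigma(a)+2\sigma(b)$ for 2-quasitraces) and invariant, so that it is the positive part of the ideal it generates. Only then does ``that ideal contains $\Ped(A\otimes\K)$'' give finiteness of $\sigma$ on $\Ped(A\otimes\K)_+$.
\end{enumerate}
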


\section{Tracial approximate oscillation zero}

\begin{df}
Let $A$ be a \CA. 
For $\tau\in \TQT(A)$ (defined on $\Ped(A\otimes\K)$)
and for a $C^*$-subalgebra $B\subset A\otimes\K,$ 
define 
$\|\tau|_B\|:=\sup\{\|\tau(b)\| :b\in \Ped(B)_+^1\}.$ 
We identify $A$ with $A\otimes e_{1,1}\subset A\otimes \K$
canonically, 
where $e_{1,1}$ is a rank one projection in $\K.$ 
Define 
$\QT(A):= 
\left\{\tau\in \TQT(A): \|\tau|_A\|=1\right\}.$ 
Let $\ol{\QT(A)}^w$ 
be the closure of $\QT(A)$ in $\TQT(A).$ 
Define $\rT(A):=\left\{\tau\in \QT(A): \tau\text{ is linear}\right\}.$
\end{df}

\begin{prop}\label{elmy11-11}
{\rm (\cite[Proposition 2.9]{FLosc})}
Let $A$ be an algebraically simple  \CA.
Then $\ol{\QT(A)}^w$ is compact and Hausdorff and $0\not\in \ol{\QT(A)}^w.$ 
\end{prop}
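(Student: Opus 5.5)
The plan is to realize $\ol{\QT(A)}^w$ as a closed subset of a product of compact sets, using the pointwise topology of Definition \ref{Dqtr}. Hausdorffness is immediate, since the topology is that of pointwise convergence of $\C$-valued functions on $\Ped(A\otimes\K)$, and that topology is Hausdorff. The two substantive points are: (a) a uniform bound $\sup_{\tau\in\QT(A)}|\tau(a)|<\infty$ for each $a\in\Ped(A\otimes\K)$; and (b) a fixed element $e\in A_+$ together with $c>0$ such that $\tau(e)\ge c$ for all $\tau\in\QT(A)$. Granting (a), $\QT(A)$ sits inside $\prod_{a\in \Ped(A\otimes\K)}\{z\in\C: |z|\le C_a\}$, which is compact by Tychonoff. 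The defining conditions of a 2-quasitrace are all closed under pointwise limits: condition (1) of Definition \ref{shi01-10}, linearity on commutative $*$-subalgebras, and condition (3). The extension to $M_2$ is also preserved, since the extension to $M_2(\Ped(A\otimes\K))\cong \Ped(A\otimes\K)$ is again a quasitrace that is controlled pointwise. Hence the closure in the product lies in $\TQT(A)$, and it coincides with $\ol{\QT(A)}^w$, which is therefore compact. Granting (b), every $\tau$ in the closure satisfies $\tau(e)\ge c>0$, so $0\notin\ol{\QT(A)}^w$.

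Both (a) and (b) rest on algebraic simplicity, $A=\Ped(A)$, combined with the subadditivity of 2-quasitraces on positive elements, $\tau(x+y)\le \tau(x)+\tau(y)$ (Blackadar--Handelman/Haagerup), and the relation $\tau(x^*ex)=\tau(e^{1/2}xx^*e^{1/2})\le \|x\|^2\tau(e)$. Fix a nonzero $e\in A_+^1$. Since $A$ is simple, $A$ equals the algebraic ideal generated by $e$, so every $b\in A_+$ can be written as $b\le\sum_{i=1}^n x_i^*ex_i$ for some $x_i\in A$. Similarly, since $\Ped(A\otimes\K)$ is the algebraic ideal generated by $A\otimes e_{1,1}$, every $a\in \Ped(A\otimes\K)_+$ satisfies $a\le \sum_i y_i^*(e\otimes e_{1,1})y_i$. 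Combining these facts gives $\tau(a)\le\big(\sum\|y_i\|^2\big)\tau(e)\le\big(\sum\|y_i\|^2\big)\|\tau|_A\|=\sum\|y_i\|^2$ for $\tau\in\QT(A)$. This proves (a) on positive elements, and it extends to all of $\Ped(A\otimes\K)$ by decomposing an arbitrary element into a combination of positive elements, using condition (3).

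The main obstacle is (b), which requires a uniform version of the decomposition above. We need constants $N$ and $M$ such that every $b\in A_+^1$ admits $b\le\sum_{i\le N}x_i^*ex_i$ with $\|x_i\|\le M$. For this we will run a Baire category argument. Write $A_+^1=\bigcup_{n}K_n$, where
\[
K_n=\Big\{b\in A_+^1:\ b\le \sum_{i\le n}x_i^*ex_i,\ \|x_i\|\le n\Big\}.
\]
We first show that each $K_n$ is closed, or at least that its closure is contained in $K_{n'}$ for a suitable $n'$, by a small perturbation trick using $(b-\delta)_+$ and a slightly enlarged sum. Baire then produces some $K_n$ with nonempty interior, and a translation and scaling argument upgrades this to the uniform statement on all of $A_+^1$. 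Once the uniform decomposition holds, subadditivity gives $\tau(b)\le NM^2\tau(e)$ for every $b\in A_+^1$ and every $\tau\in\QT(A)$. Taking the supremum over $b$ yields $1=\|\tau|_A\|\le NM^2\tau(e)$, so $c=1/(NM^2)$ works and (b) follows.
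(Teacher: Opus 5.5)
The paper gives no argument of its own here; it imports the statement from \cite[Proposition 2.9]{FLosc}. So I am judging your proposal on its own merits. Hausdorffness, the Tychonoff argument and the upper bound (a) are fine. Reducing $0\notin\ol{{\rm QT}(A)}^w$ to a uniform lower bound $\tau(e)\ge c$ on ${\rm QT}(A)$ is also the right idea.

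\textbf{Main gap: the Baire step.} The perturbation trick gives the following: if $b_k\in K_n$ and $\|b-b_k\|<\delta$, then $(b-\delta)_+=d^*b_kd$ with $\|d\|\le 1$, so $(b-\delta)_+\in K_n$. It does not give $b\in K_{n'}$.
\begin{itemize}
\item The remainder $b-(b-\delta)_+=\min(b,\delta)$ has small norm. But $A$ has no unit, so small norm gives no control on how many terms $x^*ex$, and of what norms, are needed to dominate it.
\item That control is exactly the uniform decomposition you are trying to prove. So the claim $\overline{K_n}\subseteq K_{n'}$ is circular.
\item An operator-level uniform decomposition of all of $A_+^1$ is also more than the statement needs.
\end{itemize}

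The fix is to run Baire at the level of traces. Put $K_n:=\{b\in A_+^1:\tau(b)\le n\tau(e)\ \text{for all}\ \tau\in{\rm QT}(A)\}$.
\begin{itemize}
\item These sets cover $A_+^1$ by algebraic simplicity.
\item Your trick now shows they are closed. For $\tau\in{\rm QT}(A)$ we have $\tau((b-\delta)_+)=\tau(d^*b_kd)\le\tau(b_k)$ and $\tau(b)-\tau((b-\delta)_+)=\tau(\min(b,\delta))\le\delta$.
\item Suppose $K_n\supseteq\{b\in A_+^1:\|b-b_0\|<r\}$. For any $b\in A_+^1$, the element $c=(1-r/2)b_0+(r/2)b$ lies in that ball and satisfies $b\le(2/r)c$. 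Hence $1=\|\tau|_A\|\le(2n/r)\tau(e)$.
\end{itemize}

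You can also bypass Baire entirely. If no $c>0$ worked, pick $\tau_n\in{\rm QT}(A)$ with $\tau_n(e)<4^{-n}$ and $b_n\in A_+^1$ with $\tau_n(b_n)>1/2$. Then $b=\sum_n2^{-n}b_n\in A=\Ped(A)$ satisfies $\tau(b)\le C_b\tau(e)$ for all $\tau$. This gives $2^{-n-1}<\tau_n(b)<C_b4^{-n}$ for every $n$, a contradiction.

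\textbf{Secondary error: subadditivity.} Subadditivity $\tau(x+y)\le\tau(x)+\tau(y)$ of 2-quasitraces is not a result of Blackadar--Handelman or Haagerup. It cannot be a known general fact, because it forces linearity.
\begin{itemize}
\item Take $x=\begin{pmatrix}a^{1/2}&b^{1/2}\\0&0\end{pmatrix}$ and $u=\mathrm{diag}(1,-1)$.
\item Then $x^*x+ux^*xu=2\,\mathrm{diag}(a,b)$ and $\tau(ux^*xu)=\tau(x^*x)=\tau(xx^*)=\tau(a+b)$.
\item So subadditivity would give $\tau(a)+\tau(b)\le\tau(a+b)$, hence additivity on positives, i.e.\ $\tau$ would be a trace. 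Subadditivity of all 2-quasitraces is therefore as hard as the open quasitrace problem.
\end{itemize}

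What is available, and suffices because the number of summands is fixed in each estimate, is a matrix bound. Let $R$ be the column $(x_1,\dots,x_N)^T$ and $E=e\otimes 1_N$. Then
\[
\tau\Bigl(\sum_i x_i^*ex_i\Bigr)=\tau(E^{1/2}RR^*E^{1/2})\le \Bigl\|\sum_i x_i^*x_i\Bigr\|\,N\,\tau(e).
\]
This uses only $\tau(y^*y)=\tau(yy^*)$, monotonicity, and linearity on the diagonal. All of these are available since $\tau$ is defined on $\Ped(A\otimes\K)$.

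With these two repairs your argument is complete.
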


\begin{df}\label{D2norm}
Let $A$ be a \CA\ with  
$\QT(A)\neq\emptyset.$ 
For each 
$x\in (A\otimes\K)_+$ and each 
$\lambda\in \TQT(A),$
define $\|x\|_{2,\lambda}:=\sup_{\ep>0}\{\lambda((x^*x-\ep)_+)^{1/2}\}\in[0,\infty].$  
{{When $x\in\Ped(A\otimes\K),$ 
$\|x\|_{2,\lambda}=\lambda(x^*x)^{1/2}$ due to the fact that  quasitraces are automatically lower semicontinuous on the Pedersen ideal
(see \cite[Proposition 2.7]{Fu2025}).}} 
Define
$\|x\|_{{2}}=\sup\{\|x\|_{2,\tau}: \tau\in \ol{\QT(A)}^w\}\in  [0,\infty].$
Let $l^\infty(A)$ be the \CA\ of all norm bounded sequences of $A.$ Define 
$
J_A:=\{\{x_n\}\in l^\infty(A): \lim_{n\to\infty}\|x_n\|_{2}=0\}.
$
Define $c_0(A):=\{\{x_n\}\in l^\infty(A):\lim_{n\to\infty}\|x_n\|=0\}.$
\end{df} 

\begin{df}\label{DefOS1}
(\cite[Definition A.1]{eglnkk0}, \cite[Definition 4.1]{FLosc})
Let $A$ be 
{{a \CA\ with}} $\QT(A)\neq \emptyset.$ 
Let $a\in  (A\otimes {\cal K})_+,$
define the tracial oscillation of $a$ on $S$ as following: 
\beq
\omega(a):=\lim_{n\to\infty}\sup\left\{d_\tau(a)-\tau(f_{1/n}(a)): \tau\in \ol{\QT(A)}^w\right\}.
\eneq
\end{df}

\begin{rem}
{\bf (1)} In \cite[Definition A.1]{eglnkk0}, $\omega(a)$ is defined by using traces, and the notation used there is $\omega_S(a).$
{\bf (2)} The notation of tracial oscillation used in  \cite[Definition 4.1]{FLosc} is $\omega(a)|_S.$ 
\end{rem}

\begin{df}\label{elmy13-04}
(\cite[Definition 4.7, Definition 5.1]{FLosc}) 
Let $A$ be 
{{a \CA}} with $\QT(A)\neq \emptyset.$ 
For 
$a\in {{(A\otimes\K)_+}},$
$a$ is said to have \emph{tracial approximate oscillation zero}, if for every $\ep>0,$ there is $c\in\Her_A(a)_+$ such that $\|a-c\|_{2}<\ep,$ $\|c\|\le \|a\|,$ and $\omega(c)<\ep.$ 

If $a$ has tracial approximate oscillation zero for all $a\in {{\Ped(A\otimes\K)_+}},$ then the \CA\ $A$ is said to have tracial approximate oscillation zero. 


Let $A$ be an  algebraically simple \CA\ with $\QT(A)\neq \emptyset$  and $A$ has tracial approximate oscillation zero. 
If $B$ is another \CA\ and  $A\otimes\K\cong B\otimes\K,$
then $B$ is also said to have tracial approximate oscillation zero. 
\end{df}

\begin{rem}
What we called tracial approximate oscillation zero here 
was called T-tracial approximate oscillation zero in \cite[Definition 5.1]{FLosc}.  There are also other variations of tracial approximate oscillation in \cite[Definition 4.7]{FLosc}. 
\end{rem}

The following are some frequently used properties of tracial oscillation: 

\begin{prop}\label{1204-2}
Let $A$ be 
a simple \CA\ with $\QT(A)\neq \emptyset.$ Let $a,b\in (A\otimes\K)_+.$

{\rm (i)} 
If $a\sim b,$ then $\omega(a)=\omega(b)$ 
{\rm (see \cite[Proposition 4.2]{FLosc}).} 

{\rm (ii)} 
If $ab=0,$ then $\omega(a+b)\le \omega(a)+\omega(b)$
{\rm (see \cite[Proposition 4.4 (2)]{FLosc}).} 

\end{prop}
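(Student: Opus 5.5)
For (i), I would first rewrite $\omega$ purely in terms of Cuntz classes, so that it visibly depends only on $[a]$. For $\ep>0$ and $t\ge 0$ one has $1_{(2\ep,\infty)}(t)\le f_\ep(t)\le 1_{(\ep,\infty)}(t)$. Since $\tau$ is linear on the commutative algebra $C^*(a)$ and lower semicontinuous there, this gives
\[
d_\tau((a-2\ep)_+)\le \tau(f_\ep(a))\le d_\tau((a-\ep)_+)\quad\text{for all }\tau\in\QTAW .
\]
The quantity $\sup_\tau\{d_\tau(a)-\tau(f_{\ep}(a))\}$ increases as $\ep$ increases. Hence
\[
\omega(a)=\lim_{\ep\to 0}\sup\{d_\tau(a)-d_\tau((a-\ep)_+):\tau\in\QTAW\},
\]
and this limit is a decreasing limit. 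In particular $\omega(a)$ is bounded above by each term. Some care is needed when $d_\tau(a)=\infty$. I would use the same convention as in Definition \ref{DefOS1}: the difference $\infty-$(finite) is $\infty$. Alternatively one can note that only $a$ with finite $\wh{[a]}$ on the compact set $\QTAW$ (Proposition \ref{elmy11-11}) matter.

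Next, suppose $a\sim b$. Then $d_\tau(a)=d_\tau(b)$ for all $\tau$, since $d_\tau$ is constant on Cuntz classes. Fix $\ep>0$. By R{\o}rdam's lemma, $(a-\ep)_+\lesssim b$ yields some $\delta>0$ with $(a-\ep)_+\lesssim (b-\delta)_+$, and therefore $d_\tau((a-\ep)_+)\le d_\tau((b-\delta)_+)$. It follows that
\[
d_\tau(b)-d_\tau((b-\delta)_+)\le d_\tau(a)-d_\tau((a-\ep)_+)\quad\text{for all }\tau.
\]
Take the supremum over $\tau\in\QTAW$ and use that $\omega(b)$ is below every term of its decreasing limit. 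This gives $\omega(b)\le \sup_\tau\{d_\tau(a)-d_\tau((a-\ep)_+)\}$. Letting $\ep\to0$ gives $\omega(b)\le\omega(a)$, and symmetry gives equality. The main point requiring care is the reformulation step, specifically the monotonicity and the handling of infinite values. The Cuntz comparison itself is standard.

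For (ii), suppose $ab=0$. Then $a$ and $b$ commute and $C^*(a,b)\cong C_0(\mathrm{sp})$ with disjoint supports. Hence $f(a+b)=f(a)+f(b)$ for every continuous $f$ with $f(0)=0$. Since a quasitrace is linear on commutative $C^*$-subalgebras, $\tau(f_{1/n}(a+b))=\tau(f_{1/n}(a))+\tau(f_{1/n}(b))$. Letting $n\to\infty$ also gives $d_\tau(a+b)=d_\tau(a)+d_\tau(b)$. Thus
\[
d_\tau(a+b)-\tau(f_{1/n}(a+b))=\bigl(d_\tau(a)-\tau(f_{1/n}(a))\bigr)+\bigl(d_\tau(b)-\tau(f_{1/n}(b))\bigr).
\]
The supremum over $\tau$ of this sum is at most the sum of the suprema. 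Passing to the limit $n\to\infty$ gives $\omega(a+b)\le\omega(a)+\omega(b)$.
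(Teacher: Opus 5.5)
Your proof is correct. The paper gives no argument of its own here and simply cites \cite[Proposition 4.2, Proposition 4.4 (2)]{FLosc}; your rewriting of $\omega$ through $d_\tau((a-\ep)_+)$ plus R{\o}rdam's lemma for (i), and the identity $f(a+b)=f(a)+f(b)$ (for $f(0)=0$, $ab=0$) combined with linearity of quasitraces on commutative subalgebras for (ii), is the standard self-contained proof of these facts. One wording fix: R{\o}rdam's lemma should be applied to $a\lesssim b$, which yields $(a-\ep)_+\lesssim (b-\dt)_+$ for some $\dt>0$, and not to $(a-\ep)_+\lesssim b$, since $x\lesssim b$ alone does not give $x\lesssim (b-\dt)_+$.
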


The following proposition shows that 
tracial oscillation measures the distance from $\wh{[a]}$ to positive continuous affine functions.

\begin{prop}\label{20251221-1}
Let $A$ be an algebraically simple \CA\ with $\QT(A)\neq \emptyset$ and let $a\in \Ped(A\otimes\K)_+.$ Let $S:=\ol{\QT(A)}^w$ 
{{and $\Aff_+\left(S\right):=\{f|_S:f\in\Aff_+(\TQT(A))\}.$}}
Define 
\beq\label{20251212-1}
\af &:= &
\inf\left\{\sup\left\{|d_\tau(a)-h(\tau)|:\tau\in S\right\}:h\in \Aff_+\left(S\right)\right\}; 
\\
\bt &:=&
\inf\left\{\sup\left\{d_\tau(a)-h(\tau):\tau\in S\right\}:h\in \Aff_+\left(S\right),h\le \wh{[a]}\right\};\qquad 
\\
\gamma &:=&
\inf\left\{\sup\left\{d_\tau(a)-h(\tau):\tau\in S\right\}:h\in \Aff_+\left(S\right),h< \wh{[a]}\right\}.\qquad 
\eneq
Then 
$
\af\le \bt=\gamma=\omega(a)\le 2\af.
$
\end{prop}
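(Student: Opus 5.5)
We prove the chain $\af\le\bt$, $\bt=\gamma$, $\gamma\le\omega(a)$, $\omega(a)\le\bt$ and $\omega(a)\le 2\af$. Throughout we use that $S$ is compact and Hausdorff with $0\notin S$ (Proposition \ref{elmy11-11}). Since $a\in\Ped(A\otimes\K)_+$, each $\tau\mapsto\tau(f_{1/n}(a))$ is continuous and affine on $S$. It is the restriction of $\wh{f_{1/n}(a)}$, which lies in $\Aff_+(\TQT(A))$ by simplicity whenever $f_{1/n}(a)\neq 0$. These functions increase pointwise to $\wh{[a]}$ on $S$, so $\wh{[a]}$ is lower semicontinuous and bounded on $S$.

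\textbf{The easy inequalities.} The inequality $\af\le\bt$ holds because any admissible $h\le\wh{[a]}$ satisfies $|d_\tau(a)-h(\tau)|=d_\tau(a)-h(\tau)$. The inequality $\bt\le\gamma$ is immediate, since the constraint $h<\wh{[a]}$ is stronger than $h\le\wh{[a]}$. For $\bt\ge\gamma$, take an admissible $h\le\wh{[a]}$ and replace it by $(1-\dt)h$ with small $\dt>0$. Then $(1-\dt)h<\wh{[a]}$ wherever $h>0$, and $h>0$ on $S$ because $0\notin S$. The supremum changes by at most $\dt\sup_S h$, which tends to $0$; this gives $\gamma\le\bt$. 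For $\gamma\le\omega(a)$, use $h_n:=\wh{f_{1/n}(a)}|_S$ multiplied by $(1-\dt)$. These are admissible with $h_n<\wh{[a]}$ on $S$, since $\tau(f_{1/n}(a))\le d_\tau(a)$ and $d_\tau(a)>0$ for $a\ne0$ ($a=0$ being trivial). Their suprema are $\sup_S\big(d_\tau(a)-\tau(f_{1/n}(a))\big)+O(\dt)$, which converge to $\omega(a)$.

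\textbf{The inequality $\omega(a)\le\gamma$ (Dini step).} Given $h\in\Aff_+(S)$ with $h<\wh{[a]}$ on $S$, we show that $\sup_S\big(d_\tau(a)-\tau(f_{1/n}(a))\big)\le\sup_S(d_\tau(a)-h(\tau))+\ep$ for large $n$. The functions $g_n:=\tau(f_{1/n}(a))-h(\tau)$ are continuous and increase pointwise to $\wh{[a]}-h$, which is strictly positive on $S$. For each $\tau$ some $g_n(\tau)>0$, and each set $\{g_n>0\}$ is open. By compactness of $S$ and monotonicity, there is a single $n$ with $g_n>0$ on all of $S$, that is, $\tau(f_{1/n}(a))>h(\tau)$. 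Then $d_\tau(a)-\tau(f_{1/n}(a))<d_\tau(a)-h(\tau)$ pointwise, and since the oscillation terms decrease in $n$, we get $\omega(a)\le\sup_S(\wh{[a]}-h)$. Taking the infimum over $h$ gives $\omega(a)\le\gamma$. This is the main point of the proof: strict inequality plus compactness replaces uniform convergence.

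\textbf{The inequality $\omega(a)\le2\af$.} Given $h\in\Aff_+(S)$ with $\sup_S|\wh{[a]}-h|<\af+\ep=:r$, consider $h':=h-r$. This is continuous affine on $S$ and satisfies $h'<\wh{[a]}$, but it may fail to be positive. Apply the Dini argument of the previous step to $h'$, with $g_n:=\tau(f_{1/n}(a))-h'(\tau)$; it needs only continuity of $h'$ and $h'<\wh{[a]}$, not positivity. We get $\tau(f_{1/n}(a))>h'(\tau)$ for some $n$. Hence $d_\tau(a)-\tau(f_{1/n}(a))<\wh{[a]}-h+r\le 2r$, so $\omega(a)\le2r$. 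Letting $\ep\to0$ gives $\omega(a)\le 2\af$.
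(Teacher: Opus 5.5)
Your proof is correct and follows essentially the same route as the paper: a Dini-type compactness argument with $h_n=\wh{f_{1/n}(a)}|_S$, rescaling by a factor less than $1$ to pass between the constraints $h\le\wh{[a]}$ and $h<\wh{[a]}$, and shifting $h$ down by $\af+\ep$ to obtain the factor $2$. The differences are cosmetic. You prove the Dini step inline instead of citing \cite[Proposition 5.6]{Fu2025}, you split the paper's single step $\omega(a)\le\bt$ into $\gamma\le\bt$ and $\omega(a)\le\gamma$, and you use $(1-\dt)h_n$ instead of the paper's case distinction on whether $0$ is an accumulation point of ${\rm sp}(a)$.
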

\begin{proof}
Since $A$ is algebraically simple, 
$0\notin S$ 
(Proposition \ref{elmy11-11}). 
Then $\ep_0:=\inf\{d_\tau(a):\tau\in S\}>0.$ 
If $0$ is not an accumulate point of ${\rm sp}(a)\nzero,$ 
then $\wh{[a]}$ is continuous. 
Thus $\omega(a)=0$ and $0=\af=\bt=\gamma=\omega(a).$ 
In the following we may assume that $0$ is an accumulate point of ${\rm sp}(a)\nzero.$ 

It is trivial that $\af\le \bt\le \gamma.$ 
For each $n\in\N,$ define $h_n:S\to\R_+$ 
by $h_n(\tau):=\tau(f_{1/n}(a))$ for all $\tau\in S.$
Then $h_n\in \Aff_+\left(S\right)$ and  
$h_n< \wh{[a]}$ for all $n\in\N.$
By definition, we have 
\beq
\gamma\le \inf_n\left\{\sup\left\{d_\tau(a)-h_n(\tau):\tau\in S\right\}\right\}=\omega(a) . 
\eneq

Let $g\in \Aff_+\left(S\right)$ with $g\le \wh{[a]}$ and let $\lambda\in(0,1).$ Then $\lambda g\in \Aff_+\left(S\right).$ 
Note that for all $\tau\in S,$ $d_\tau(a)<\infty$
(see \cite[Proposition 2.10 (2)]{FLosc}). 
Then  for all $\tau\in S,$
$
\lambda g(\tau)<g(\tau)\le d_\tau(a)=\lim_n h_n(\tau).
$
Since $\{h_n\}$ is increasing and $S$ is compact, by a  Dini-type theorem (see \cite[Proposition 5.6]{Fu2025}), there is $n_0\in\N$ such that $\lambda g\le h_{n_0}.$ 
Then 
\beq
\hspace{-0.5in}
\omega(a)
&=&\inf_n\left\{\sup\left\{d_\tau(a)-h_n(\tau):\tau\in S\right\}\right\}\\
&\le &
\sup\left\{d_\tau(a)-h_{n_0}(\tau):\tau\in S\right\}
\\&\le & 
\sup\left\{d_\tau(a)-\lambda g(\tau):\tau\in S\right\} 
\\&= & 
\sup\left\{(d_\tau(a)- g(\tau))+(1-\lambda)g(\tau):\tau\in S\right\}. 
\\&\le & 
\sup\left\{d_\tau(a)- g(\tau):\tau\in S\right\}+
(1-\lambda)\sup\left\{g(\tau):\tau\in S\right\}. \ \ 
\eneq
Since $S$ is compact and $g$ is continuous,   $\sup\left\{g(\tau):\tau\in S\right\}<\infty.$ 
Let $\lambda\to 1,$ then we have 
$
\omega(a)\le \sup\left\{d_\tau(a)- g(\tau):\tau\in S\right\}.
$ Since $g$ is arbitrary (with $g\le \wh{[a]}$), 
we have $\omega(a)\le \bt.$
Then 
$
\omega(a)\le \bt\le\gamma\le \omega(a).
$ Thus 
\beq
\af\le \bt=\gamma= \omega(a).
\eneq

To show  $\omega(a)\le 2\af,$
let $\dt\in(0,\ep_0)$ and let $g\in  \Aff_+\left(S\right)$ such that 
\beq
\sup\left\{|d_\tau(a)-g(\tau)|:\tau\in S\right\}<\af+\dt.
\eneq 
Then $g(\tau)-\af-\dt<d_\tau(a)<g(\tau)+\af+\dt$ for all $\tau\in S.$ 
Note that $g-\af-\dt$ is continuous on $S,$ $\{h_n\}$ is an increasing sequence, and $g(\tau)-\af-\dt<d_\tau(a)=\lim_n h_n(\tau)$ for all $\tau\in S.$
By a Dini-type theorem (see \cite[Proposition 5.6]{Fu2025}), 
there is $n_1\in\N$ such that 
$g < h_{n_1}.$ Then for all $\tau\in S,$
$
d_\tau(a)-h_{n_1}(\tau)\le (g(\tau)+\af+\dt)-(g(\tau)-\af-\dt)=2\af+2\dt.
$
Then $\omega(a)\le \sup\{d_\tau(a)-h_{n_1}(\tau):\tau\in S\}\le 2\af+2\dt.$ Since  $\dt$ is arbitrary, we have $\omega(a)\le 2\af.$
\end{proof}

\begin{lem}\label{yi15-1gg} 
Let $A$ be a 
\CA\ with $\TQT(A)\neq \{0\}.$ 
Let $e\in \Ped(A\otimes\K)_+$ 
and let $\tau\in\TQT(A)$ with $\|\tau|_{\Her(e)}\|\le 1.$
Let $y\in\Her(e)_{sa}$
and $a,b\in \Her(e)_+^1.$ 
Then we have : 

(1) 
$\tau(|y|)\le\|y\|_{2,\tau}.
$


(2) 
$|\tau(a)-\tau(b)|\le 3\tau(|a-b|)^{1/2}
\le 3\|a-b\|_{2,\tau}^{1/2}. 
$ 
\end{lem}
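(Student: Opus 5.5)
Part (1) is a Cauchy--Schwarz type estimate, and part (2) follows from it by splitting $a-b$ into positive and negative parts and factoring each part through a square root. Throughout I work inside the hereditary subalgebra $\Her(e)$. Since $e\in\Ped(A\otimes\K)_+$, the algebra $\Her(e)$ sits inside the Pedersen ideal (every element of $\Her(e)$ lies in $\Ped(A\otimes \K)$, e.g. because $\Her(e)$ is contained in the ideal generated by $e$ and is "locally" $\Ped$). Hence $\tau$ is finite on $\Her(e)_+$, $\|y\|_{2,\tau}=\tau(y^*y)^{1/2}$ there, and the hypothesis $\|\tau|_{\Her(e)}\|\le 1$ says $\tau(c)\le 1$ for all $c\in\Her(e)_+^1$.

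For (1), let $y\in \Her(e)_{sa}$ and work in the commutative $C^*$-algebra $C^*(|y|)\subset\Her(e)$, on which $\tau$ is linear (Definition \ref{shi01-10}(2)). The restriction of $\tau$ to $C^*(|y|)$ is therefore a positive linear functional, i.e. integration against a positive measure $\mu$ on ${\rm sp}(|y|)\setminus\{0\}$. The bound $\|\tau|_{\Her(e)}\|\le1$ gives total mass $\mu\le 1$, via $\tau(f(|y|))\le1$ for $0\le f\le1$ and then taking a supremum. Cauchy--Schwarz for $\mu$ then yields
\[
\tau(|y|)=\int t\,d\mu\le \Big(\int t^2 d\mu\Big)^{1/2}\mu(\cdot)^{1/2}\le \tau(y^2)^{1/2}=\|y\|_{2,\tau}.
\]

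For (2), put $y=a-b\in\Her(e)_{sa}$, which satisfies $\|y\|\le 1$. The difficulty is that $\tau$ is only a quasitrace, so $\tau(a)-\tau(b)$ need not equal $\tau(a-b)$. I will avoid linearity by using $\tau(x^*x)=\tau(xx^*)$ together with a comparison argument. Write $y=y_+-y_-$. Then $a\le b+y_+$, and since $b$ and $y_+$ need not commute I cannot simply add their traces. Instead I use the standard quasitrace estimate: for positive $c,d$, $\tau(c+d)\le \tau(c)+\tau(d)+{}$(error). More cleanly, for 2-quasitraces Haagerup / Blanchard--Kirchberg give subadditivity on positive elements in the Pedersen ideal, which I would invoke in the form $\tau(c+d)\le \tau(c)+\tau(d)$ up to a constant. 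Getting the constant $3$ and the square-root loss instead suggests a direct route, namely
\[
a=a^{1/2}a^{1/2},\qquad b=b^{1/2}b^{1/2}.
\]
Using the Powers--Størmer inequality $\|a^{1/2}-b^{1/2}\|_{2,\tau}^2\le \tau(|a-b|)$, valid on commutative pieces, together with the estimate $|\tau(x^*x)-\tau(z^*z)|\le(\|x\|_{2,\tau}+\|z\|_{2,\tau})\|x-z\|_{2,\tau}$, I obtain
\[
|\tau(a)-\tau(b)|\le 2\,\|a^{1/2}-b^{1/2}\|_{2,\tau}\le 2\tau(|a-b|)^{1/2}.
\]
This needs the 2-norm triangle inequality $\|x+z\|_{2,\tau}\le \|x\|_{2,\tau}+\|z\|_{2,\tau}$ for 2-quasitraces. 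That inequality follows by passing to $M_2$: with $v=\begin{pmatrix}x\\ z\end{pmatrix}$ one has $\tau(v^*v)=\tau(vv^*)$, and applying $\tau$ to the $2\times2$ positive matrix gives the needed Cauchy--Schwarz. The remaining slack, which is where the constant $3$ is absorbed, covers the fact that Powers--Størmer must be done noncommutatively. Finally, the second inequality in (2) is just (1) applied to $y=a-b$.

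The main obstacle is the noncommutative step: proving $\|a^{1/2}-b^{1/2}\|_{2,\tau}^2\lesssim\tau(|a-b|)$ for a 2-quasitrace rather than a trace. The cleanest substitute I would try first avoids square roots altogether. I would write $\tau(a)-\tau(b)$ using $a\le b+y_+$ and the Cuntz/quasitrace estimate $\tau(b+y_+)\le \tau(b)+\tau(y_+)+2\tau(y_+)^{1/2}$, which comes from the 2-norm triangle inequality applied to $(b+y_+)^{1/2}$ written as a row $\begin{pmatrix} b^{1/2} & y_+^{1/2}\end{pmatrix}$. Using $\tau(y_+)\le\tau(|y|)\le 1$, this gives $\tau(a)-\tau(b)\le 3\tau(|y|)^{1/2}$, and the same argument with $a$ and $b$ exchanged gives the symmetric bound.
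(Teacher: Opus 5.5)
Your final paragraph is, in substance, the paper's proof. The paper writes $a\le b+|a-b|$; you use $b+y_+$ and then $\tau(y_+)\le\tau(|y|)$, which is equivalent. It then applies $\tau(c+d)\le(\tau(c)^{1/2}+\tau(d)^{1/2})^2$ for $c,d\ge 0$, cited from Haagerup's Lemma 3.5(1). Finally it uses $\tau(b)\le 1$ and $\tau(|a-b|)\le 1$ to get the constant $3$. Your part (1) is the same Cauchy--Schwarz estimate as the paper's $x\le (x^2/\delta+\delta)/2$ argument.

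The gap is in how you justify the key inequality. You derive it from a general triangle inequality $\|x+z\|_{2,\tau}\le\|x\|_{2,\tau}+\|z\|_{2,\tau}$ for 2-quasitraces, obtained via the column $v=\binom{x}{z}$. That does not work. The $2\times 2$ trick at best gives $(x+z)^*(x+z)\le (1+t)x^*x+(1+t^{-1})z^*z$. Since $\tau$ is not known to be additive on the non-commuting positive elements $x^*x$ and $z^*z$, you cannot split $\tau$ of the right-hand side. For a general 2-quasitrace, what is available is only the quasi-triangle inequality $\|x+z\|_{2,\tau}^{2/3}\le\|x\|_{2,\tau}^{2/3}+\|z\|_{2,\tau}^{2/3}$ (Haagerup's Lemma 3.5). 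That is why the paper works with $2/3$-powers elsewhere. The same objection applies to your Powers--St{\o}rmer detour, which needs linearity of $\tau$ to estimate $\tau(x^*x)-\tau(z^*z)$ and a noncommutative Powers--St{\o}rmer inequality you do not have. The vague ``subadditivity up to a constant'' is also not needed. Drop both.

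What rescues your final step is that your particular decomposition has orthogonal source parts. Take
$r_1=\begin{pmatrix} b^{1/2}&0\\ 0&0\end{pmatrix}$, $r_2=\begin{pmatrix} 0&y_+^{1/2}\\ 0&0\end{pmatrix}$ and $r=r_1+r_2$.
Then $rr^*=\begin{pmatrix} b+y_+&0\\ 0&0\end{pmatrix}$, while $r_1^*r_1=\mathrm{diag}(b,0)$ and $r_2^*r_2=\mathrm{diag}(0,y_+)$ are orthogonal. From $0\le (\sqrt{t}\,r_1-r_2/\sqrt{t})^*(\sqrt{t}\,r_1-r_2/\sqrt{t})$ you get $r^*r\le (1+t)r_1^*r_1+(1+t^{-1})r_2^*r_2$. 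Monotonicity, additivity of $\tau$ on commuting elements, and $\tau(x^*x)=\tau(xx^*)$ then give $\tau(b+y_+)=\tau(r^*r)\le (1+t)\tau(b)+(1+t^{-1})\tau(y_+)$ for every $t>0$. Optimizing over $t$ yields $(\tau(b)^{1/2}+\tau(y_+)^{1/2})^2$; with $\tau(b)\le 1$ and $\tau(y_+)\le 1$, already $t=\tau(y_+)^{1/2}$ gives $\tau(b)+3\tau(y_+)^{1/2}$. With this argument (or simply a citation of Haagerup's lemma) in place of the general triangle inequality, your proof is complete and coincides with the paper's.
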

\begin{proof}
(1): 
Let $\dt:=\|y\|_{2,\tau}.$ 
Since $x\le (x^2/\dt+\dt)/2$ holds for all $x\in\R,$ 
and since $\|\tau|_{\Her(e)}\|\le 1,$ 
we have 
$\tau(|y|)\le(\tau(y^2)/\dt+\dt)/2= (\|y\|^2_{2,\tau}/\dt+\dt)/2
=\|y\|_{2,\tau}.$ 

(2): Note that $\tau(b)\le 1$ and $\tau(|a-b|)\le 1.$ 
Then by \cite[Lemma 3.5 (1)]{Haa}, we have 
$\tau(a)=\tau(b+(a-b))\le \tau(b+|a-b|) 
\le (\tau(b)^{1/2}+\tau(|a-b|)^{1/2})^2\le \tau(b)+2\tau(|a-b|)^{1/2}+\tau(|a-b|)\le \tau(b)+3\tau(|a-b|)^{1/2}.$ 
Hence $|\tau(a)-\tau(b)|\le 3\tau(|a-b|)^{1/2}.$ 
The last  inequality in (2) follows from (1). 
\end{proof}

\begin{lem}\label{elmy07-01}
Let $A$ be a $\sigma$-unital 
\CA\ 
with $\TQT(A)\neq \{0\}.$ 
Let $e\in \Ped(A\otimes\K)_+$ and let  $a,b\in \Her(e)_+^1.$  
Let $\tau\in\TQT(A)$ with $\|\tau|_A\|\le 1.$

Then $M :=\sup\{\|\lambda|_{\Her(e)}\|:\lambda \in \ol{\QT(A)}^w\}<\infty,$ 
and $|\tau(a)-\tau(b)| \le 3M^{3/4} \|a-b\|_{2}^{1/2}.$
\end{lem}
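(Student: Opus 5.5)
Two things have to be shown: the finiteness of $M$, and the estimate. Both reduce to Lemma \ref{yi15-1gg} after a rescaling.

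\emph{Step 1: $M<\infty$.} Since $e\in\Ped(A\otimes\K)_+$, $\Her(e)$ sits inside the Pedersen ideal in the relevant sense. Concretely, I would pick $e'\in\Ped(A\otimes\K)_+$ with $e'\ge$ a local unit for $\Her(e)$, e.g.\ $g(e)$ with $g=1$ on $[\ep,\infty)$ and $\ep$ small. This handles elements of $\Ped(\Her(e))$, which are supported away from $0$ in $e$. Then for $b\in\Ped(\Her(e))_+^1$ we get $\lambda(b)\le \lambda(\text{something fixed in }\Ped)$, and so $\|\lambda|_{\Her(e)}\|\le \wh{[e]}(\lambda)=d_\lambda(e)$.

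The map $\lambda\mapsto d_\lambda(e)$ is bounded on the compact set $\ol{\QT(A)}^w$. The paper invokes exactly this fact in the proof of Proposition \ref{20251221-1}: $d_\tau(e)<\infty$ on $S$ for $e\in\Ped$, by \cite[Proposition 2.10]{FLosc}. Write $d_\lambda(e)=\sup_n\lambda(f_{1/n}(e))$. Each $\lambda\mapsto\lambda(f_{1/n}(e))$ is continuous, and $d_\lambda(e)$ is finite pointwise, but a supremum of continuous functions need only be lower semicontinuous, and lower semicontinuity alone does not give boundedness on a compact set. So I would instead bound $d_\lambda(e)\le\lambda(h)$ for a single $h\in\Ped(A\otimes\K)_+$ with $h\,e=e$ approximately. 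One choice is to use that $\Ped(A\otimes\K)$ contains $f_{\ep}(c)$-type local units for $\Her(e)$, e.g.\ via $e\lesssim (c-\ep)_+$ for suitable $c\in\Ped_+$ with $c\ge$ a multiple of $e$ (Pedersen ideal closure under such operations). Then $\lambda\mapsto\lambda(h)$ is continuous on the compact set, hence bounded. This is the main technical point; I expect it to be handled by standard Pedersen ideal facts, possibly citing \cite{FLosc}.

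\emph{Step 2: the estimate.} The case $M=0$ is trivial. Otherwise set $\tau'=\tau/M'$ where $M'=\max(M,\|\tau|_{\Her(e)}\|)$. Here the hypothesis $\|\tau|_A\|\le1$ matters: it means $\tau$ lies in the cone over $\QT(A)$ with scale at most 1, i.e.\ $\tau=t\lambda$ with $t\in[0,1]$ and $\lambda\in\QT(A)$ (or $\tau=0$). Hence $\|\tau|_{\Her(e)}\|\le M$, and $\|a-b\|_{2,\tau}\le\|a-b\|_{2,\lambda}\le \|a-b\|_2$. Now apply Lemma \ref{yi15-1gg}(2) to $\tau/M$, which has $\|\tau/M|_{\Her(e)}\|\le1$:
\[
|\tau(a)-\tau(b)|=M\,|(\tau/M)(a)-(\tau/M)(b)|\le 3M\|a-b\|_{2,\tau/M}^{1/2}=3M\cdot M^{-1/4}\|a-b\|_{2,\tau}^{1/2}.
\]
This uses $\|x\|_{2,\tau/M}=M^{-1/2}\|x\|_{2,\tau}$. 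Finally $\|a-b\|_{2,\tau}\le\|a-b\|_2$ gives the bound $3M^{3/4}\|a-b\|_2^{1/2}$.

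The only nontrivial point in Step 2 is the reduction $\tau=t\lambda$ with $\lambda\in\QT(A)$ and $t\le 1$, when $\|\tau|_A\|\in(0,1]$. Here I would use that $\|\tau|_A\|<\infty$, which comes from algebraic simplicity or from the $\sigma$-unital setting in Step 1, and normalize $\lambda=\tau/\|\tau|_A\|$.
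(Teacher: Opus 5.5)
Your proposal is correct and is essentially the paper's proof. The paper takes $M<\infty$ from \cite[Proposition 2.10 (2)]{FLosc}, rescales $\tau$ by $s=\|\tau|_{\Her(e)}\|$, applies Lemma \ref{yi15-1gg}(2), and then uses $s^{3/4}\le M^{3/4}$ and $\|a-b\|_{2,\tau}\le\|a-b\|_2$. You instead rescale by $M$ directly, which also sidesteps dividing by $s=0$. You also make explicit the reduction $\tau=t\lambda$ with $t\in[0,1]$, $\lambda\in\QT(A)$, which the paper leaves implicit.

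For Step 1, closing it by citation as the paper does is the right move, because your own sketch is not yet an argument. The condition $c\ge$ a multiple of $e$ does not give $e\lesssim(c-\ep)_+$; for example, take $c=e$ in $C([0,1])$ with $e(t)=t$. Your idea of bounding $\|\lambda|_{\Her(e)}\|\le d_\lambda(e)\le\lambda(h)$ for one fixed $h\in\Ped(A\otimes\K)_+$ does work, though. Write $e\le\sum_i x_i$ with each $x_i$ having a local unit $h_i\in\Ped(A\otimes\K)_+^1$, which Pedersen's construction provides, and use subadditivity of $d_\lambda$ to get $d_\lambda(e)\le\lambda(\sum_i h_i)$.
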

\begin{proof}
By  \cite[Proposition 2.10 (2)]{FLosc}, 
$M <\infty.$
Let $s:=\|\tau|_{\Her(e)}\|$ and let 
$\tau_0:=\tau/s,$ then $\|\tau_0|_{\Her(e)}\|=1.$
By   Lemma \ref{yi15-1gg} (2), 
\beq
|\tau(a)-\tau(b)|/s
=|\tau_0(a)-\tau_0(b)| 
\le 3\|a-b\|_{2,\tau_0}^{1/2}
=3  \|a-b\|_{2,\tau}^{1/2}/s^{1/4}.
\eneq
Then $|\tau(a)-\tau(b)|\le 3 s^{3/4} 
\|a-b\|_{2,\tau}^{1/2}
\le 3 M^{3/4} \|a-b\|_{2,\tau}^{1/2}
\le 3 M^{3/4} \|a-b\|_{2}^{1/2}.$
\end{proof}


\begin{lem}\label{eler09-1}
Let $A$ be a $\sigma$-unital 
\CA\ with $\TQT(A) \neq \{0\}.$ 
Let $a\in \Ped(A\otimes\K)_+^1$ 
{{with tracial approximate oscillation zero.}}
Then for all  $\ep>0,$
there is $b\in\Her(a)_+^1$ such that 
$\tau(a)< \tau(b)+\ep$ and $d_\tau(b)-\tau(f_{1/4}(b))<\ep$ for all $\tau\in\TQT(A)$ with $\|\tau|_A\|\le 1.$
\end{lem}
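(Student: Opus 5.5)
Since $a$ has tracial approximate oscillation zero, we pick $c\in\Her(a)_+$ with $\|c\|\le\|a\|\le 1$, $\|a-c\|_2<\dt$ and $\omega(c)<\dt$, where $\dt>0$ will be fixed at the end. The element $c$ is already close to $a$ in trace, by Lemma \ref{elmy07-01}. Its defect is that $\omega(c)$ only controls $d_\tau(c)-\tau(f_{1/n}(c))$ for some large $n$ (and only uniformly on $S=\ol{\QT(A)}^w$), not the fixed cut-off $f_{1/4}$. We therefore rescale: define $b:=g(c)$ for a continuous function $g:[0,1]\to[0,1]$ with $g(0)=0$ that sends the small interval $[1/n,\,2/n]$ onto the region where $f_{1/4}$ is $1$. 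A concrete choice is $g(t)=\min\{1,\,t\cdot n/2\}$. Then $b\in\Her(c)_+^1\subset\Her(a)_+^1$, $d_\tau(b)=d_\tau(c)$, and $f_{1/4}(b)\ge f_{1/n}(c)$ pointwise, since $g(t)\ge 1/2$ exactly when $t\ge 1/n$. Consequently $d_\tau(b)-\tau(f_{1/4}(b))\le d_\tau(c)-\tau(f_{1/n}(c))$.

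We then transfer these estimates from $S$ to all $\tau$ with $\|\tau|_A\|\le 1$.

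\textbf{Oscillation estimate.} By definition of $\omega$, choose $n$ with $\sup_{\tau\in S}\big(d_\tau(c)-\tau(f_{1/n}(c))\big)<\dt$. Any nonzero $\tau$ with $\|\tau|_A\|\le1$ can be written $\tau=s\tau_0$ with $\tau_0\in\QT(A)\subset S$ and $s\le 1$. Both sides of the estimate scale linearly, so it passes to every such $\tau$ with bound $\dt$.

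\textbf{Trace estimate.} We need $\tau(a)<\tau(b)+\ep$. Since $b\ge c$ (because $g(t)\ge t$ on $[0,1]$ for $n\ge2$), we have $\tau(b)\ge\tau(c)$. Lemma \ref{elmy07-01} with $e=a$ gives $|\tau(a)-\tau(c)|\le 3M^{3/4}\|a-c\|_2^{1/2}<3M^{3/4}\dt^{1/2}$ for every $\tau$ with $\|\tau|_A\|\le 1$. Note that $a,c\in\Her(a)^1_+$ and $a\in\Ped(A\otimes\K)_+$, as required there. Choosing $\dt<\ep$ with $3M^{3/4}\dt^{1/2}<\ep$ then gives both inequalities. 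The constant $M$ depends only on $a$, so it can be fixed before choosing $\dt$.

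\textbf{Main obstacle.} The delicate step is the oscillation bound for $b$. One must check two things.

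First, $d_\tau(b)=d_\tau(c)$ requires $g$ to vanish only at $0$, which holds for the chosen $g$.

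Second, one must verify the pointwise inequality $f_{1/4}\circ g\ge f_{1/n}$. For $t\ge 2/n$ we have $g(t)=1\ge 1/2$, so the left side is $1$. For $t\in[1/n,2/n]$ we have $g(t)\in[1/2,1]$, again giving $1$. For $t<1/n$ the right side $f_{1/n}(t)$ is $0$.

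A possible snag is that the relevant quantities could be infinite, but $\tau(c)\le\tau(a)<\infty$ because $c\in\Her(a)$ with $a$ in the Pedersen ideal, so all quantities are finite. The dilation argument is where care is needed, since quasitraces are only homogeneous along rays, and that is exactly what the reduction uses.
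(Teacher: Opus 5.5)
Your proof is correct and follows the paper's argument. You pick $c$ from tracial approximate oscillation zero, control $\tau(a)-\tau(c)$ by Lemma \ref{elmy07-01}, and rescale $c$ by functional calculus so the cut-off $f_{1/n}$ becomes $f_{1/4}$. Your $g(t)=\min\{1,nt/2\}\ge t$ is slightly cleaner than the paper's $b=f_{\theta/4}(c)$: it gives $b\ge c$ and $d_\tau(b)=d_\tau(c)$ directly, so you avoid the extra $\theta s$ term the paper absorbs via $\tau(c)\le\tau((c-\theta)_+)+\theta s$.

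One slip: your side remark $\tau(c)\le\tau(a)$ is false in general for $c\in\Her(a)$ with $\|c\|\le\|a\|$. The finiteness you need comes instead from $d_\tau(c)\le d_\tau(a)<\infty$, or from the bound $\|\tau|_{\Her(a)}\|\le M$ of Lemma \ref{elmy07-01}. Nothing else in your argument uses that inequality.
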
 
\begin{proof}
By  Lemma \ref{elmy07-01}, there is $s>0$ such that 
for all $\tau\in  \TQT(A)$ with $\|\tau|_A\|\le 1$ and all $x,y\in\Her(a)_+^1,$ 
\beq\label{elmy005-08}
\|\tau|_{\Her(a)}\|\le s \mbox{\quad and\quad}
|\tau(x)-\tau(y)|\le s \|x-y\|_2^{1/2}.
\eneq 

Let $\ep>0$
and $\dt:=\min\{\ep/8, (\ep/4s)^2\}.$
Since $a$ has tracial approximate oscillation zero, 
there is $c\in\Her(a)_+^1\setminus\{0\}$ such that $\omega(c)<\dt$ and $\|a-c\|_2<\dt.$ 
By \eqref{elmy005-08}, 
\beq
\tau(a)
\le \tau(c)+s \|a-c\|_2^{1/2}
\le \tau(c)+s\dt^{1/2}
\le \tau(c)+\ep/4.
\eneq
Since $\omega(c)<\dt<\ep/4,$ there is $\theta>0$ such that 
$\theta<\min\{\ep/4(s+1),\|c\|/4\}$ and 
$\sup\{d_\tau(c)-\tau(f_\theta(c)):\tau\in\ol{\QT(A)}^w\}<\ep/4.$ 
Let $b:=f_{\theta/4}(c).$
Let $\tau\in \ol{\QT(A)}^w.$ Then 
\beq
d_\tau(b)-\tau(f_{1/4}(b))
=d_\tau(f_{\theta/4}(c))-\tau(f_{1/4}(f_{\theta/4}(c)))
\le d_\tau(c)-\tau(f_\theta(c))\le \ep/4, 
\eneq
and $
\tau(a)\le \tau(c)+\ep/4 \le\tau((c-\theta)_+)+\theta s+\ep/4
\le \tau(f_{\theta/4}(c))+\theta s+\ep/4\le \tau(b)+\ep/2.  
$
\end{proof}

\begin{lem}\label{elmy05-04}
Let $A$ be 
a simple   \CA\ with $\QT(A)\neq\emptyset$ 
and let $e\in \Ped(A\otimes\K)_+.$ 
Assume that for every $\ep>0,$ 
there is a sequence of mutually orthogonal positive elements $\{b_n\}\subset \Her(e)_+^1$ such that 
$ \sum_{n=1}^\infty \omega(b_n)<\ep$ and $d_\tau(e)-\sum_{n=1}^\infty d_\tau(b_n)<\ep$ for all $\tau\in \ol{\QT(A)}^w.$ 
Then $e$ has tracial approximate oscillation zero. 
\end{lem}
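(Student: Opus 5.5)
Given $\ep>0$, the plan is to build $c\in\Her(e)_+$ with $\|c\|\le\|e\|$, $\omega(c)<\ep$ and $\|e-c\|_2<\ep$. We take $c$ to be a finite orthogonal sum of the $b_n$, suitably scaled and truncated so that it sits below $e$ in a controlled way.

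\textbf{Step 1 (truncate).} Take the sequence $\{b_n\}$ supplied by the hypothesis for a small $\dt$. By Proposition \ref{elmy11-11}, $S:=\ol{\QT(A)}^w$ is compact. The partial sums $\sum_{n\le N}d_\tau(b_n)$ increase to $\sum_n d_\tau(b_n)$, and that limit is $>d_\tau(e)-\dt$. Each $d_\tau(b_n)$ is only lower semicontinuous, so instead I use the continuous functions $\tau(f_{\eta}(b_n))$. Since $\omega(b_n)$ is small, we have $\tau(f_\eta(b_n))\ge d_\tau(b_n)-\omega(b_n)-\dt 2^{-n}$ for small $\eta$, uniformly on $S$. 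The Dini-type theorem (\cite[Proposition 5.6]{Fu2025}) then gives $N$ with $\sum_{n\le N}\tau(f_\eta(b_n))>d_\tau(e)-3\dt$ on $S$.

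\textbf{Step 2 (build $c$).} Set $b:=\sum_{n\le N}b_n$. It lies in $\Her(e)_+^1$ because the $b_n$ are orthogonal. By Proposition \ref{1204-2}(ii), $\omega(b)\le\sum_n\omega(b_n)<\dt$, and the same bound holds for $g(b)$ whenever $g$ is a function with $g(t)>0$ exactly for $t>0$ (Proposition \ref{1204-2}(i)).

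To approximate $e$ in $2$-norm, I would not use $b$ directly. Instead consider $c:=e^{1/2}f_\eta(b)e^{1/2}$, or better $c:=f_\eta(b)^{1/2}\,e\,f_\eta(b)^{1/2}$, which lies in $\Her(b)\subset\Her(e)$ and satisfies $\|c\|\le\|e\|$. It has the same support as $f_\eta(b)$ up to Cuntz equivalence: $c\lesssim f_\eta(b)$, and conversely $f_\eta(b)\lesssim c$ because $e$ acts as a strictly positive element on $\Her(e)$ after cut-down. I am not fully certain of this; if needed, replace it by a spectral truncation argument. Given the equivalence, $\omega(c)$ is small, by Proposition \ref{1204-2}(i) together with the relation between $\omega$ and Cuntz classes. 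This is the delicate point, and it may require a cutoff $g(b)$ with $g\ge f_\eta$ and support equal to that of $b$.

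\textbf{Step 3 (2-norm estimate).} This is the main obstacle. Write $p:=f_\eta(b)$ and $q$ for a contraction with $pq=p$, for instance $q=f_{\eta/2}(b)$. Then
\[
\|e-pep\|_2\le\|(1-p)e\|_2+\|pe(1-p)\|_2\le 2\|e^{1/2}\|\,\|e^{1/2}(1-p)\|_2 .
\]
Here $\|e^{1/2}(1-p)\|_{2,\tau}^2=\tau((1-p)e(1-p))$. To bound this I would assume first, by rescaling, that $e\le 1$, or more generally use $\tau\big((1-p)e(1-p)\big)\le\|e\|\,\tau\big((1-p)f_\sigma(e)(1-p)\big)+\sigma\,\|\tau|_{\Her(e)}\|$. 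Morally the quantity $\tau((1-p)f_\sigma(e)(1-p))$ is at most $d_\tau(e)-\tau(p)$, since $p\le 1$ is a contraction in $\Her(e)$. Making this precise with quasitraces needs care: I would work inside the hereditary subalgebra, where $d_\tau(e)$ acts as the "unit trace", and use $\tau(x^*x)=\tau(xx^*)$ together with Lemma \ref{yi15-1gg} to handle the non-linearity. Then $d_\tau(e)-\tau(p)<3\dt$ uniformly on $S$ by Step 1. The resulting bound $\|e-c\|_2\lesssim(\|e\|\cdot 3\dt+\sigma M)^{1/2}$, with $M$ from Lemma \ref{elmy07-01}, is $<\ep$ for small $\dt$ and $\sigma$, which concludes the proof.
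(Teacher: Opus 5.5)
\textbf{There is a genuine gap, in Step 1, and Step 3 depends on it.}

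The Dini-type theorem \cite[Proposition 5.6]{Fu2025} upgrades a pointwise inequality $\lim_N g_N(\tau)>h(\tau)$, with $g_N$ continuous and increasing on the compact set $S$, to a uniform one only when the sets $\{\tau: g_N(\tau)>h(\tau)\}$ are open. That requires $h$ to be upper semicontinuous. Your comparison function $h(\tau)=d_\tau(e)-3\dt$ is only lower semicontinuous, and its failure to be continuous is exactly what $\omega(e)$ measures. So the uniform bound $\sum_{n\le N}\tau(f_\eta(b_n))>d_\tau(e)-3\dt$ on $S$ does not follow.

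The bound can also genuinely fail. Nothing in the hypotheses rules out the following situation. Take $\tau_k\to\tau_\infty$ in $S$ with $d_{\tau_k}(e)$ staying well above $d_{\tau_\infty}(e)$. Let $b_1$ carry the common rank, and let $b_k$ carry the excess rank at $\tau_k$ and almost nothing elsewhere. Then each $\tau\mapsto d_\tau(b_k)$ is nearly continuous, so $\omega(b_k)$ is small, yet every finite partial sum misses the excess at $\tau_k$ for $k>N$. Since Step 3 ends with ``$d_\tau(e)-\tau(p)<3\dt$ uniformly on $S$ by Step 1'', your $2$-norm estimate is not established.

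\textbf{The fix: apply Dini to $e$-weighted quantities, not to $d_\tau(e)$.} This is what the paper does.
\begin{itemize}
\item Choose $s_n\in\Her(b_n)_+^1$ (for example $f_{\eta_n}(b_n)$) with $\sum_n\sup_{\tau\in S}(d_\tau(b_n)-\tau(s_n))<\ep$.
\item Set $d_N:=e^{1/2}(\sum_{n\le N}s_n)e^{1/2}\le e$. This is essentially your first candidate for $c$.
\item The map $\tau\mapsto\tau(e-d_N)$ is continuous and decreasing in $N$. Its pointwise limit is $<2\ep$ by \cite[Proposition 5.5]{Fu2025}, which is the rigorous, quasitrace-safe form of your ``morally'' inequality.
\item Dini against the constant $3\ep$ now gives $N$ with $\tau(e-d_N)\le 3\ep$ on all of $S$. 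The weighting by $e$ rescues uniformity because $\tau\mapsto\tau(e)$ is continuous, so extra rank of $e$ near a jump point carries little $e$-mass.
\item The $2$-norm bound follows directly from $(e-d_N)^2\le\|e\|(e-d_N)$, giving $\|e-d_N\|_2\le(3\ep\|e\|)^{1/2}$.
\end{itemize}
This route also avoids your triangle inequality for $\|\cdot\|_2$. For $2$-quasitraces that inequality holds only in the form $\|x+y\|_2^{2/3}\le\|x\|_2^{2/3}+\|y\|_2^{2/3}$.

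\textbf{Step 2 can be settled as you first hoped.} The equivalence $d_N\sim\sum_{n\le N}s_n$ does hold, since the $s_n$ lie in $\Her(e)$. No change of cutoff is needed either. For a contraction $s\in\Her(b_n)_+$ one has $\omega(s)\le\sup_\tau(d_\tau(s)-\tau(s))\le\sup_\tau(d_\tau(b_n)-\tau(s))$. So $\omega(f_{\eta_n}(b_n))$ is already small for the same $\eta_n$ chosen in Step 1, and Proposition \ref{1204-2} finishes.
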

\begin{proof}
Let $S:=\ol{\QT(A)}^w$
and let $\ep>0.$ Let $\{b_n\}_{n\in\N}$ be as in the statement.  
Since $\sum_{n=1}^\infty\omega(b_n)<\ep,$ there are $s_n\in\Her(b_n)_+^1$ for all $n\in\N$ such that 
\beq\label{elsan28-07}
\sum_{n=1}^\infty\sup\{d_\tau(b_n)-\tau(s_n):\tau\in S\}<\ep. 
\eneq
Then for all $\tau\in S,$ we have 
\beq\label{elsan28-06}
d_\tau(e)<
\sum_{n=1}^\infty d_\tau(b_n)+\ep
<\sum_{n=1}^\infty \tau(s_n)+2\ep. 
\eneq
For all $n\in\N,$ let $d_n:=e^{1/2}(\sum_{m=1}^n s_m)e^{1/2}
\le e.$ 
By \cite[Proposition 5.5]{Fu2025},  
for all $\tau\in S,$
\beq\label{elsan28-08}
\lim_{n\to\infty}\tau(e-d_n)\le 
\lim_{n\to\infty}
\left(d_\tau(e)-\tau\left(\sum_{m=1}^n s_m\right)\right)
\overset{\eqref{elsan28-06}}{<}2\ep.
\eneq 
By a Dini-type result (see \cite[Proposition 5.6]{Fu2025}), there is $n_0\in\N$ such that 
$
\tau(e-d_{n_0})\le 3\ep \mbox{ for all }\tau\in S.
$ Thus 
\beq\label{elsan28-09}
\|e-d_{n_0}\|_2
\le (3\ep\cdot \|e\|)^{1/2}. 
\eneq
Note that $d_{n_0}\sim \sum_{m=1}^{n_0} s_m$ and $\{s_m\}$ are mutually orthogonal. 
By Proposition \ref{1204-2},
\beq
\omega(d_{n_0})
&=&\omega\left(\sum_{m=1}^{n_0} s_m\right)
\le \sum_{m=1}^{n_0} \omega(s_m)
\le 
\sum_{m=1}^{n_0}\sup\{d_\tau(s_m)-\tau(s_m):\tau\in S\}
\nonumber\\&\le& 
\sum_{m=1}^{n_0}\sup\{d_\tau(b_m)-\tau(s_m):\tau\in S\}
\overset{\eqref{elsan28-07}}{<}\ep. \label{elsan28-10}
\eneq
Then  \eqref{elsan28-09} and \eqref{elsan28-10} show that $e$ has tracial approximate oscillation zero. 
\end{proof}


\begin{thm}\label{eler03-1} 
Let $A$ be a {{$\sigma$-unital algebraically simple}} \CA\ with $\QT(A)\neq\emptyset.$ 
Then the following are equivalent: 

${\rm (i)}$ $A$ has tracial approximate oscillation zero.

${\rm (ii)}$ For all $e\in \Ped(A\otimes\K)_+$ and all $\ep>0,$ 
there is a sequence of mutually orthogonal positive elements $\{b_n\}\subset \Her(e)_+^1$ such that 
$ \sum_{n=1}^\infty \omega(b_n)<\ep,$ and $d_\tau(e)-\sum_{n=1}^\infty \tau(b_n)<\ep$ for all $\tau\in \ol{\QT(A)}^w.$

${\rm (iii)}$ For all $e\in \Ped(A\otimes\K)_+$ and all $\ep>0,$ 
there is a sequence of mutually orthogonal positive elements $\{b_n\}\subset \Her(e)_+^1$ such that 
$ \sum_{n=1}^\infty \omega(b_n)<\ep,$ and $d_\tau(e)-\sum_{n=1}^\infty d_\tau(b_n)<\ep$ for all $\tau\in \ol{\QT(A)}^w.$

\end{thm}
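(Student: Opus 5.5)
I would prove the cycle (ii)$\Rightarrow$(iii)$\Rightarrow$(i)$\Rightarrow$(ii).

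\textbf{(ii)$\Rightarrow$(iii).} This is immediate. Since $b_n\in\Her(e)_+^1$, we have $\tau(b_n)\le d_\tau(b_n)$, so $d_\tau(e)-\sum_n d_\tau(b_n)\le d_\tau(e)-\sum_n\tau(b_n)<\ep$.

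\textbf{(iii)$\Rightarrow$(i).} This is exactly Lemma \ref{elmy05-04}, applied to each $e\in\Ped(A\otimes\K)_+$.

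\textbf{(i)$\Rightarrow$(ii).} This is the real content. Fix $e\in \Ped(A\otimes\K)_+$ and $\ep>0$; after rescaling, assume $\|e\|\le 1$ (the conclusion only involves $\Her(e)$ and $d_\tau(e)$). Let $S:=\ol{\QT(A)}^w$, which is compact with $0\notin S$ by Proposition \ref{elmy11-11}. By Lemma \ref{elmy07-01}, $M:=\sup_{\tau\in S}\|\tau|_{\Her(e)}\|<\infty$.

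The plan is an exhaustion built inductively.

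\emph{Step 1.} Replace $e$ by $e_1:=f_{\dt_1}(e)$, with $\dt_1$ chosen by a Dini argument (\cite[Proposition 5.6]{Fu2025}) so that $d_\tau(e)-\tau(e_1)$ is small uniformly on $S$. This is possible because $\tau(f_{1/n}(e))\nearrow d_\tau(e)$ and $d_\tau(e)<\infty$ is bounded on $S$. Note that $e_1\in\Ped(A\otimes\K)_+^1$.

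\emph{Step 2.} Apply Lemma \ref{eler09-1} to $e_1$ (valid since $A$ has tracial approximate oscillation zero). This gives $c_1\in\Her(e_1)_+^1$ with $\tau(e_1)<\tau(c_1)+\ep/4$ and $d_\tau(c_1)-\tau(f_{1/4}(c_1))<\ep/4$ on $S$. Set $b_1:=f_{1/8}(c_1)$, or a suitable functional-calculus cutdown, so that $\omega(b_1)\le \ep/4$ and $\tau(b_1)$ is close to $\tau(c_1)$.

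\emph{Step 3.} To keep orthogonality, pass to the orthogonal complement. Let $e_2:=g(c_1)^{\perp}$-type element, for instance $e_1^{1/2}(1-f_{1/16}(c_1))e_1^{1/2}$, or better $(1-h(c_1))e(1-h(c_1))$ with $h$ a function equal to $1$ on the support of $b_1$. Then $e_2\perp b_1$, $e_2\in\Ped$, and the "leftover trace" $d_\tau(e)-\tau(b_1)$ is comparable to $d_\tau(e_2)$ plus a small error.

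\emph{Step 4.} Repeat Step 2 on $e_2$ with error $\ep/2^{n+1}$. This gives mutually orthogonal $b_n\in\Her(e)_+^1$ with $\sum_n\omega(b_n)<\ep$.

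\emph{Step 5.} It remains to show that $d_\tau(e)-\sum_n\tau(b_n)\to$ something below $\ep$. The bookkeeping inequality is
\[
d_\tau(e)\le \sum_{k\le n}\tau(b_k)+ d_\tau(e_{n+1})+\text{(errors)},
\]
together with a strict decrease $\sup_S d_\tau(e_{n+1})\le \theta\sup_S d_\tau(e_n)$ for a fixed $\theta<1$. The point is that each step captures a definite fraction of the remaining trace: $\tau(b_n)$ is close to $\tau(e_n)$, which is close to $d_\tau(e_n)$ after the Dini cutdown, so $d_\tau(e_{n+1})$ is a small fraction of $d_\tau(e_n)$. Iterating gives geometric decay of the remainder, and the series condition follows.

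\textbf{Main obstacle.} I expect the hardest part to be Step 3: cutting $e$ by a complement of $b_n$ while controlling $d_\tau$ of the remainder uniformly on $S$. Quasitraces are only additive on commuting elements, so I would arrange every cutdown inside $C^*(c_n)$ or use commuting functional calculus. I would then estimate $\tau(e)-\tau(\text{cut})$ via Lemma \ref{elmy07-01} (trace differences bounded by $3M^{3/4}\|\cdot\|_2^{1/2}$) and via \cite[Proposition 5.5]{Fu2025}.

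If this direct route becomes messy, the fallback is to take $b_n$ inside successive hereditary subalgebras $\Her((e_n-\eta)_+)$ orthogonal to earlier ones, produced via Lemma \ref{eler09-1} applied to elements of the form $(1-f(c))e(1-f(c))$. The required orthogonality then comes from $f(c)\,(1-f'(c))=0$ for nested cutoff functions $f, f'$.
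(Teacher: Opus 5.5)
\textbf{Verdict: genuine gap in (i)$\Rightarrow$(ii).} Your (ii)$\Rightarrow$(iii) and (iii)$\Rightarrow$(i) are exactly the paper's. The problem starts at Step~1: you cannot choose $\dt_1$ so that $d_\tau(e)-\tau(f_{\dt_1}(e))$ is uniformly small on $S$. By definition, $\sup_{\tau\in S}\bigl(d_\tau(e)-\tau(f_{\dt}(e))\bigr)$ decreases to $\omega(e)$ as $\dt\downarrow 0$. So you are asking for $\omega(e)$ to be small, and hypothesis (i) gives no such control on $e$ itself. It only says $e$ is $\|\cdot\|_2$-close to elements of small oscillation in $\Her(e)$, while $\wh{[e]}$ is merely lower semicontinuous on $S$. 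The Dini-type result \cite[Proposition 5.6]{Fu2025} only lets you dominate a \emph{continuous} function lying strictly below $\lim_n\tau(f_{1/n}(e))$; that is the only way the paper uses it.

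Step~5's ``definite fraction'' and the contraction $\sup_S d_\tau(e_{n+1})\le\theta\sup_S d_\tau(e_n)$ rest on this same cutdown (``$\tau(e_n)$ is close to $d_\tau(e_n)$''). So the convergence argument has no support, and nothing you give produces a $\theta<1$ independent of $n$. Step~3 is also left open. The element $(1-h(c_1))e(1-h(c_1))$ commutes neither with $e$ nor with $h(c_1)$, so quasitrace additivity gives no formula for its $d_\tau$ in terms of $d_\tau(e)$ and $\tau(b_1)$. Your fallback has the same bookkeeping problem.

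The missing idea is to never ask for uniform convergence of $\tau(e_n)$ to $d_\tau(e)$. The paper works with the increasing cutdowns $e_n=f_{1/2^n}(e)$. At stage $n+1$ it takes the uncaptured part $d_n:=e_{n+1}-f_{1/16}\bigl(\sum_{m\le n}c_{m,n}\bigr)$. Since $e_{n+1}$ acts as a unit on $\Her(e_n)\ni c_{m,n}$, $d_n\ge 0$ commutes with the captured part, and $\tau(e_{n+1})=\tau(d_n)+\sum_{m\le n}\tau\bigl(f_{1/16}(c_{m,n})\bigr)$ holds exactly. Lemma~\ref{eler09-1} is applied to $d_n$ with error $\ep/4^{n+1}$. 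The earlier pieces are shrunk to $c_{m,n+1}=f_{1/8}(c_{m,n})$, which makes the new $c_{n+1,n+1}\in\Her(d_n)$ orthogonal to all of them. With $b_m=f_{1/4}(c_{m,m})$, this gives $\tau(e_n)\le\sum_{m\le n}\tau(b_m)+11\ep/12$ for every $n$ and every $\tau\in S$. Only then does one let $n\to\infty$ \emph{pointwise} in $\tau$, using $d_\tau(e)=\lim_n\tau(e_n)$. The uniformity required in (ii) comes from the summable errors, not from any Dini argument applied to $e$.
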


\begin{proof} 
Note that ${\rm (ii)}\Rightarrow {\rm (iii)}$ is trivial because 
$\tau(b_n)\le d_\tau(b_n).$ 

(iii) $\Rightarrow$ (i): Let $e\in\Ped(A\otimes\K)_+.$ 
Since (iii) holds, by Lemma \ref{elmy05-04}, $e$ has tracial approximate oscillation zero. Since $e$ is arbitrary, 
(i) holds.

${\rm (i)}\Rightarrow {\rm (ii)}:$
Without loss of generality, we may assume that $\|e\|=1.$
Let 
$S:=\ol{\QT(A)}^w.$ 
Let $e_n:= f_{1/2^{n}}(e)\in\Her(e)_+^1$ $(n\in\N).$ 

\bigskip
{\bf Claim:} There
is a set of positive elements 
$\{c_{n,i}\in \Her(e)_+^1:n\in\N,  i\ge n\}$ 
such that for all $n\in\N,$ 
the following hold: 

$(1_n)$ $c_{n,n}\in \Her(e_{n})_+^1,$
{{and $c_{n,i+1}=f_{1/8}(c_{n,i})$ for all $n\in\N,  i\ge n;$}}

$(2_n)$ $c_{1,n}, c_{2,n},...,c_{n,n}$ 
are mutually orthogonal; 

$(3_n)$ $\sup\{d_\tau(c_{n,n})-\tau(f_{1/4}(c_{n,n})):\tau\in S\}< \ep/4^n;$ 

$(4_n)$ $
{{\tau}}(e_n 
)\le
(\sum_{m=1}^n d_\tau(c_{m,n})) 
+\ep/4^n +\ep/3$ for all $\tau\in S.$

\bigskip
{\bf Proof of the Claim:}
Since 
$c_{n,i+1}=f_{1/8}(c_{n,i})$ 
depend on $c_{n,n}$
for all $n\in\N,  i\ge n$ (see $(1_n)$),  
the key to proving the claim is 
to construct the sequence $\{c_{n,n}\}_{n\in\N}.$ 
In the following, 
we will construct $\{c_{n,n}\}$ inductively. 

For $n=1,$ by Lemma \ref{eler09-1},
there is $c_{1,1}\in\Her(e_1)_+^1$ such that 
$\tau(e)<\tau(c_{1,1})+\ep/4\le d_\tau(c_{1,1})+\ep/4$ for all $\tau\in S$ and 
$\sup\{d_\tau(c_{1,1})-\tau(f_{1/4}(c_{1,1})):\tau\in S\}< \ep/4.$
Thus $c_{1,1}$ satisfies $(1_1)-(4_1).$ 
Assume that for $n\ge 1,$ $\{c_{1,1},...,c_{n,n}\}$ have been constructed and satisfy 
$(1_n)-(4_n).$ 
Define  
\beq
d_n:=
e_{n+1}-f_{1/16}\left(\sum_{m=1}^n c_{m,n}\right).
\eneq
Since $e_{n+1}\cdot \left(f_{1/16}\left(\sum_{m=1}^n c_{m,n}\right)\right)=\left(f_{1/16}\left(\sum_{m=1}^n c_{m,n}\right)\right),$ we have 
\beq\label{elmy04-2} 
d_n\cdot \left(f_{1/16}\left(\sum_{m=1}^n c_{m,n}\right)\right)
=\left(f_{1/16}\left(\sum_{m=1}^n c_{m,n}\right)\right)\cdot d_n.
\eneq 
Let $m\le n.$
By $(1_n)$ 
we have 
$c_{m,n+1}\in \Her(c_{m,m})\subset \Her(e_m)\subset \Her(e_n).$ 
Then 
\beq\label{elsan28-01}
e_{n+1}\cdot\left(\sum_{m=1}^n c_{m,n+1}\right)=\sum_{m=1}^n c_{m,n+1}. 
\eneq
We also have 
\beq\label{elsan28-02}
\hspace{-0.3in}
f_{1/16}\left(\sum_{m=1}^n c_{m,n}\right)\cdot 
\left(\sum_{m=1}^n c_{m,n+1}\right)
&\overset{(1_n),(2_n)}{=} &
f_{1/16}\left(\sum_{m=1}^n c_{m,n}\right)\cdot 
f_{1/8}\left(\sum_{m=1}^n c_{m,n}\right)
\nonumber\\
&=&
f_{1/8}\left(\sum_{m=1}^n c_{m,n}\right)=
\sum_{m=1}^n c_{m,n+1}.
\eneq
By \eqref{elsan28-01} and \eqref{elsan28-02}, 
\beq\label{elsan27-01}
d_n\perp \sum_{m=1}^n c_{m,n+1}.
\eneq 

Since $A$ has tracial approximate oscillation zero, 
by Lemma \ref{eler09-1},
there is 
\beq\label{elsan27-02}
c_{n+1,n+1}\in\Her(d_n)_+^1\subset \Her(e_{n+1})_+^1
\eneq
such that 
\beq\label{elsan27-05}
\tau(c_{n+1,n+1})>\tau(d_n)-\ep/ 
4^{n+1}{\rm \ for\ all\ }
\tau\in S, {\rm \ and}
\eneq
\beq\label{elsan27-03}
\sup\{d_\tau(c_{n+1,n+1})-\tau(f_{1/4}(c_{n+1,n+1})):\tau\in S\}< 
\ep/ 
4^{n+1}.
\eneq 
Note that \eqref{elsan27-02} shows that $(1_{n+1})$ holds. 
$(2_n)$ together with \eqref{elsan27-01} and \eqref{elsan27-02} show that $(2_{n+1})$ holds.
\eqref{elsan27-03} shows that $(3_{n+1})$ holds. 
Let  $\tau\in S,$ then 
\beq
\tau(e_{n+1}) 
&=&
\tau\left(d_n+f_{1/16}\left(\sum_{m=1}^n c_{m,n}\right)\right)
\\
&\overset{\eqref{elmy04-2}}{=}&
\tau\left(d_n\right)
+\tau\left(f_{1/16}\left(\sum_{m=1}^n c_{m,n}\right)\right)
\\
&\overset{(2_n)
}{=}&
\tau\left(d_n\right)+\left(\sum_{m=1}^n \tau(f_{1/16}(c_{m,n}))\right)
\\
&\overset{(3_n)}{\le}&
\tau\left(d_n\right)+
\left(\sum_{m=1}^n (d_\tau(c_{m,n+1})+\ep/4^{m})\right)
\\
&\le &\tau(d_n)+
\left(\sum_{m=1}^n d_\tau(c_{m,n+1})\right)
+\ep/3 
\\&\overset{\eqref{elsan27-05}}{<}&
\left(\sum_{m=1}^{n+1} d_\tau(c_{m,n+1})\right)+\ep/
4^{n+1}+\ep/3. 
\label{elsan27-04}
\eneq
Then \eqref{elsan27-04} shows condition $(4_{n+1})$ holds. 
By induction, the claim holds.

Now we define $b_n:=f_{1/4}(c_{n,n})$ for all $n\in\N,$
where $c_{n,n}$ are the positive elements in the above claim. 
Let $m,n\in\N$ with $m<n.$
Note that 
$b_m\cdot c_{m,m+1}=f_{1/4}(c_{m,m})\cdot f_{1/8}(c_{m,m})
=f_{1/4}(c_{m,m})=b_m.$ By induction, 
$b_m\cdot c_{m,i}=b_m$ for all $i>m.$ 
In particular,  $b_m\cdot c_{m,n}=b_m.$ Then 
$b_m\in\Her(c_{m,n}).$
By $(2_n)$ of the claim, 
$c_{m,n}\perp c_{n,n}.$ 
Then 
\beq\label{elsan28-03}
b_m\perp b_n. 
\eneq
By $(3_n)$ of the claim,  for all $\tau\in S,$ 
\beq
\hspace{-0.2in}
d_\tau(b_n)-\tau(b_n)
= d_\tau(f_{1/4}(c_{n,n}))-\tau(f_{1/4}(c_{n,n}))
\le d_\tau(c_{n,n})-\tau(f_{1/4}(c_{n,n}))\overset{(3_n)}{<}\ep/4^n.
\eneq
Hence $\omega(b_n)\le \ep/4^n,$ and then 
\beq\label{elsan28-04}
\sum_{n=1}^\infty \omega(b_n)\le \sum_{n=1}^\infty \ep/4^n<\ep. 
\eneq

Let $\tau\in S$ and let $n\in\N.$ 
By $(3_n)$ and $(4_n)$ of the claim, we have 
\beq
\tau(e_{n} 
)
&\overset{(4_n)}{\le} &
\left(\sum_{m=1}^n d_\tau(c_{m,n})\right)+\ep/4^n
+\ep/3
\\ &\le &   
\left(\sum_{m=1}^n d_\tau(c_{m,m})\right)+\ep/4^n
+\ep/3
\\ &\overset{(3_n)}{\le} &
\left(\sum_{m=1}^n (\tau(f_{1/4}(c_{m,m})) +\ep/4^m)\right)+\ep/4^n
+\ep/3 
\\ &=  &
\left(\sum_{m=1}^n \tau(b_m)\right)+
\left(\sum_{m=1}^n \ep/4^m\right)+\ep/4^n+\ep/3
\\ &\le  &
\left(\sum_{m=1}^n \tau(b_m)\right)+11\ep/12.
\eneq
Then 
\beq\label{elsan28-05}
d_\tau(e)=\lim_{n\to\infty} \tau(e_{n})\le 
\lim_{n\to\infty} 
\left(\sum_{m=1}^n \tau(b_m)\right)+11\ep/12
<\sum_{m=1}^\infty \tau(b_m)+\ep. 
\eneq
Then \eqref{elsan28-03}, \eqref{elsan28-04}, and \eqref{elsan28-05} show that (ii) holds. 
\end{proof}

\section{Hereditary dense canonical map}

Recall from \cite{Fu2025} the concept of hereditary surjectivity of the canonical map $\Gamma:$
\begin{df}\label{HeDe}
(\cite[Definition 4.5]{Fu2025})
Let $A$ be a \CA\ with $\TQT(A)\neq\{0\}.$ 
We say the 
canonical map $\Gamma: \Cu(A)\to \LAff_+\left(\TQT(A)\right)$
is hereditary surjective,
if the following holds: 
For every $a\in (A\otimes\K)_+$ and every $f\in \LAff_+\left(\TQT(A)\right)$
with $f(\tau) <d_\tau(a)$ for all $\tau\in \TQT(A)\nzero,$ 
there is $b\in (A\otimes\K)_+$ such that 
$b\lesssim a$ and $d_\tau(b)=f(\tau)$ for all $\tau\in \TQT(A).$
\end{df}

Recall the following results from our previous work \cite{Fu2025}
that shows the relationships between stable rank one,
hereditary surjective, and tracial approximate oscillation zero. 
In this section, we will generalize the concept of 
hereditary surjective to hereditary dense (Definition \ref{X-dense}), 
and show that hereditary dense implies tracial approximate oscillation zero. Later, we will build the equivalence between 
hereditary dense implies tracial approximate oscillation zero (Theorem \ref{elmy11-10}). 

\begin{thm}
{\rm (\cite[Theorem 4.8, Theorem 5.8]{Fu2025})}
Let $A$ be a separable
simple \CA. 

(1) If $A$ has stable rank one, 
then $\Gamma$ is hereditary surjective. 

(2) 
If $\Gamma$ is hereditary surjective, 
then $A$ has tracial approximate oscillation zero. 
\end{thm}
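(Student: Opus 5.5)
The statement has two parts, and I would prove them separately. Part (2) goes through criterion (iii) of Theorem \ref{eler03-1}. Part (1) goes through the surjectivity results for stable rank one.

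\emph{Part (1).} Let $A$ be separable, simple, of stable rank one, and let $a\in(A\otimes\K)_+$ and $f\in\LAff_+(\TQT(A))$ with $f<\wh{[a]}$ on $\TQT(A)\nzero$.
\begin{enumerate}
\item[(a)] Stable rank one passes to $\Her(a)$ and to $A\otimes\K$. Moreover $\Cu(\Her(a))$ is identified with the full interval $\{x\in\Cu(A): x\le\infty[a]\}$. In stable rank one the smaller interval $\{x: x\le [a]\}$ is realized by elements of $\Her(a)$ and is closed under suprema of increasing sequences.
\item[(b)] Write $f=\sup_n f_n$ with $f_n\in\Aff_+$ increasing and $f_n\ll f$.
\item[(c)] Using the Thiel and Antoine--Perera--Robert--Thiel surjectivity results (\cite[8.11]{T20}, \cite[7.13]{APRT}), I would build an increasing sequence $[b_n]\le[a]$ with $\wh{[b_n]}$ squeezed between $f_{n-1}$ and $f_n$.
\item[(d)] Taking the supremum $[b]=\sup_n[b_n]$ then gives $b\lesssim a$ and $\wh{[b]}=f$.
\end{enumerate}
The key input for (c) is that stable rank one gives weak cancellation in $\Cu(A)$. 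Together with the strict inequality $f_n\ll f<\wh{[a]}$, this lets one choose each $b_n$ inside $\Her(a)$ rather than merely somewhere in $\Cu(A)$. This is the step I expect to be the main obstacle, since we have no strict comparison to fall back on. I would try to handle it with Thiel's soft-element and interpolation arguments, carried out inside the interval below $[a]$.

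\emph{Part (2).} Assume $\Gamma$ is hereditary surjective. Fix $e\in\Ped(A\otimes\K)_+$, $\ep>0$, and $S:=\ol{\QT(A)}^w$. I would verify (iii) of Theorem \ref{eler03-1}, mimicking the inductive construction in the proof of (i)$\Rightarrow$(ii) there.
\begin{enumerate}
\item[(a)] \emph{First piece.} Let $g(\tau):=(1-\dt)\tau(f_\eta(e))$. This is continuous, affine, and satisfies $g<\wh{[e]}$. Hereditary surjectivity gives $b\lesssim e$ with $\wh{[b]}=g$.
\item[(b)] \emph{Oscillation of $b$.} Since $\wh{[b]}$ is continuous on the compact set $S$, the Dini-type result \cite[Proposition 5.6]{Fu2025} makes $\tau(f_{1/n}(b))\nearrow g$ uniform. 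Hence $\omega(b)=0$, consistent with Proposition \ref{20251221-1}.
\item[(c)] \emph{Moving into $\Her(e)$.} Since $b\lesssim e$, for small $\theta>0$ there is $x$ with $(b-\theta)_+=x^*x$ and $b_1:=xx^*\in\Her(e)$.
\item[(d)] \emph{Controlling $b_1$.} By the same uniform convergence, $\omega(b_1)=\omega((b-\theta)_+)$ can be made $<\ep/2$ by Proposition \ref{1204-2}(i). Also $d_\tau(b_1)$ is uniformly within $\ep/2$ of $g$.
\item[(e)] \emph{Iteration.} A single piece cannot capture all of $d_\tau(e)$, because $\tau(f_\eta(e))$ need not approach $d_\tau(e)$ uniformly when $\omega(e)>0$. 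So I would repeat the procedure in the orthogonal remainder $e_{n+1}-f_{1/16}(\cdot)$ of the previous pieces, exactly as in the Claim of Theorem \ref{eler03-1}. The tolerances are $\ep/4^n$, and at each stage we take $\eta\to0$.
\end{enumerate}
This produces mutually orthogonal $b_n\in\Her(e)^1_+$ with $\sum_n\omega(b_n)<\ep$ and $d_\tau(e)-\sum_n d_\tau(b_n)<\ep$. Theorem \ref{eler03-1} (iii)$\Rightarrow$(i), via Lemma \ref{elmy05-04}, then gives tracial approximate oscillation zero.
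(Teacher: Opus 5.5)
\textbf{Part (1) has a genuine gap, at the step that carries the content of the implication.} In step (c) you need $[b_n]\le[a]$ and also $[b_n]\le[b_{n+1}]$, but the tools you invoke only control ranks. \cite[Theorem 8.11]{T20} and \cite[Theorem 7.13]{APRT} produce \emph{some} $x\in\Cu(A)$ whose $\wh{x}$ is a prescribed function. They give no relation between $x$ and $[a]$, nor between successive choices.

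Passing from $\wh{x}<\wh{[a]}$ to $x\le[a]$ is a comparison statement, and stable rank one does not provide it. Villadsen's algebras \cite{V97} have stable rank one, yet contain $x,y$ with $d_\tau(x)<d_\tau(y)$ for all $\tau\in\rT(A)$ and $x\not\lesssim y$. Weak cancellation cannot bridge this. Its hypothesis is already an inequality in $\Cu(A)$ (namely $x+z\le y+z'$ with $z'\ll z$), so it cancels summands but never turns an inequality of ranks into a Cuntz inequality.

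For the same reason, $\wh{[b_n]}\le f_n\le\wh{[b_{n+1}]}$ does not make $([b_n])_n$ increasing, so the supremum in (d) is not available either. Appealing to ``Thiel's soft-element and interpolation arguments inside the interval below $[a]$'' names what must be proved rather than proving it. The missing ingredient is a realization result that produces the representative inside $\Her(a)$ (i.e.\ below $[a]$), not merely somewhere in $\Cu(A)$. That is the substance of part (1), which the paper imports from \cite{Fu2025} without proof, and your outline supplies no substitute for it.

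\textbf{Part (2) is correct but takes a different, equally iterative, route.} In this paper, hereditary surjectivity trivially implies hereditary density, since every $f\in\Sigma(X_1)$ below $\wh{[a]}$ is realized exactly. Corollary~\ref{elmy11-03} then gives (2) at once. Its engine, Theorem~\ref{20251216-1}, splits the \emph{target} $c\,\wh{[e]}$ into continuous pieces. It realizes them one at a time in the complements furnished by Lin's orthogonality lemma (Lemma~\ref{yi04-1}).

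You instead split $e$ spectrally and use hereditary surjectivity in place of Lemma~\ref{eler09-1} inside the Claim of Theorem~\ref{eler03-1}. This works, but you should record two estimates you only gesture at:
\begin{itemize}
\item For $2/n\le\theta$ one has $d_\tau((b-\theta)_+)-\tau(f_{1/n}((b-\theta)_+))\le d_\tau(b)-\tau(f_{2\theta}(b))$. Hence $\omega((b-\theta)_+)\to0$ by your Dini argument.
\item For $\tau\in S$ one has $\tau(d_n)-\tau(f_\eta(d_n))\le 2\eta\sup_{\sigma\in S}d_\sigma(e)$. This converts your $d_\tau$-estimate into the trace estimate $\tau(c_{n+1,n+1})>\tau(d_n)-\ep/4^{n+1}$ that the Claim requires. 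First apply the $f_{\theta/4}$ trick from the proof of Lemma~\ref{eler09-1} to get norm at most one and the $f_{1/4}$-condition.
\end{itemize}
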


\begin{df}\label{X-dense}
Let $A$ be a \CA\ with $\TQT(A)\neq\{0\}.$ 
Let $X\subset \Aff_+(\TQT(A))$  be a subset that containing $0$ and let 
\beq
\Sigma(X) &:=&\{h\in\LAff_+(\TQT(A)):
\mbox{ There is a sequence } \{h_n\} \subset X
\nonumber\\&& \quad 
\mbox{ such that } h(\tau)=\sum_{n=1}^\infty h_n(\tau) \mbox{ for all }\tau\in\TQT(A)\}.
\eneq
We say the 
canonical map $\Gamma: \Cu(A)\to \LAff_+\left(\TQT(A)\right)$
is $\Sigma(X)$-hereditary dense, if the following holds: 
For every $a\in (A\otimes\K)_+,$ every $\ep>0,$ 
and every $f\in \Sigma(X)$ 
with $f(\tau) <d_\tau(a)$ for all $\tau\in \TQT(A)\nzero,$ there is $b\in (A\otimes\K)_+$ such that 
$b\lesssim a$ and $|f(\tau)-d_\tau(b)|<\ep$ for all $\tau\in \TQT(A).$


Let $X_1:=\{\hat e\in \Aff_+(\TQT(A)): e\in \Ped(A\otimes\K)_+\}.$ 
If $\Gamma$ is $\Sigma(X_1)$-hereditary dense, 
then we say $\Gamma$ is \emph{hereditary dense}. 
In this case, $\Sigma(X_1)\supset 
 \left\{\wh e, \wh{[e]}:e\in \Ped(A\otimes\K)_+\right\}.$

\end{df} 

The following is taken from \cite[Lemma 5.3]{Fu2025}. 
See also \cite[3.1]{LinJFA}
and \cite[2.30]{LinAdv}.
\begin{lem}\label{yi04-1}
{\rm (Lin's orthogonality lemma, 
\cite[Lemma 5.3]{Fu2025})} 
Let $A$ be an 
algebraically  simple
\CA. Let $a\in (A\otimes\K)_+,$
$e\in \Ped(A\otimes \K)_+,$ and $\ep>0.$
Assume that $a\lesssim e$
and $\omega(a)<\ep.$  
Then 
there are $b,c\in \Her(e)_+^1$ and 
 such that   

(1) $d_\tau(a)-\ep<
\tau(b)\le d_\tau(b) \le d_\tau(a)$ for all 
$\tau\in \overline{\QT(A)}^w;$  
%

(2) $\omega(b)<\ep;$

(3) $bc=0;$

(4) 
$d_\tau(e)-d_\tau(a)\le d_\tau(c)<d_\tau(e)-d_\tau(a)+\ep
\label{yi04-4}
$ for all $\tau\in  \overline{\QT(A)}^w.$
\end{lem}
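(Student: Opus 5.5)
The plan is to produce $b$ as a cut-down of a copy of $a$ that sits inside $\Her(e)$, and then to take $c$ as the compression of $e$ onto the "complement" of that copy. Rescaling $e$ changes neither $\Her(e)$ nor $d_\tau(e)$, so assume $\|e\|\le 1$.

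\emph{Step 1: transport $a$ into $\Her(e)$.} Since $\omega(a)<\ep$, choose $\dt>0$ with
\[
\sup\{d_\tau(a)-\tau(f_{\dt}(a)):\tau\in\ol{\QT(A)}^w\}<\ep.
\]
Since $a\lesssim e$, Rørdam's lemma gives $\eta\in(0,\dt/4)$ and $r$ with $(a-\eta)_+=r^*er$. Put $z:=e^{1/2}r$. Then $z^*z=(a-\eta)_+$ and $zz^*\in\Her(e)$.

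For any continuous $h\ge 0$ with $h(0)=0$, the elements $h(zz^*)$ and $h(z^*z)$ are Murray--von Neumann equivalent via polar decomposition. Hence they are Cuntz equivalent and have the same quasitrace value. With $g:=f_{\dt/2}$ (as a function of $t\mapsto t+\eta$), I will choose a function $k$ with $k(z^*z)\ge f_\dt(a)$ and $k(z^*z)\in\Her((a-\eta)_+)$, and define $b:=k(zz^*)\in\Her(e)_+^1$.

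\emph{Step 2: verify (1) and (2).} Since $b\sim k(z^*z)\lesssim a$, we get $d_\tau(b)\le d_\tau(a)$. We also have
\[
\tau(b)=\tau(k(z^*z))\ge\tau(f_\dt(a))>d_\tau(a)-\ep \quad\text{on } \ol{\QT(A)}^w,
\]
which gives (1). For (2), note $b\in\Her(e)\subset\Ped(A\otimes\K)$, so $\wh b$ is continuous affine, positive on the compact set $\ol{\QT(A)}^w$, and satisfies $\wh b\le\wh{[b]}$. Proposition \ref{20251221-1} ($\omega=\bt$) then gives
\[
\omega(b)\le\sup_\tau\bigl(d_\tau(b)-\tau(b)\bigr)\le\sup_\tau\bigl(d_\tau(a)-\tau(b)\bigr)<\ep.
\]

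\emph{Step 3: build $c$ and verify (3) and (4).} Arrange the support of $k$ so that there is a continuous $k'\ge 0$ with $k'(0)=0$ and $k'k=k$; for example $k=f_{2\eta'}$ and $k'=f_{\eta'}$. Put $p:=k'(zz^*)$ and
\[
c:=(1-p)e(1-p)\in\Her(e)_+^1,
\]
computed in the unitization. Then $b(1-p)=0$, so $bc=0$, which is (3).

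For the lower bound in (4), use the standard estimate $e\lesssim(1-p)e(1-p)\oplus pep$, which follows from $e\le 2(1-p)e(1-p)+2pep$. Together with $pep\lesssim p\sim k'(z^*z)\lesssim a$, this gives $d_\tau(e)\le d_\tau(c)+d_\tau(a)$.

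For the upper bound, $b$ and $c$ are orthogonal in $\Her(e)$, so $b+c\lesssim e$ and $d_\tau(c)+d_\tau(b)\le d_\tau(e)$. Here $d_\tau(e)<\infty$ on $\ol{\QT(A)}^w$ because $e\in\Ped(A\otimes\K)$. Combining with $d_\tau(b)\ge\tau(b)>d_\tau(a)-\ep$ from Step 2 yields $d_\tau(c)<d_\tau(e)-d_\tau(a)+\ep$.

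\emph{Main obstacle.} The delicate point is the bookkeeping of the cut-off functions. We need, simultaneously, that $\tau(k(z^*z))\ge\tau(f_\dt(a))$ and that $k'(z^*z)\lesssim a$ with $k'k=k$. Since $z^*z=(a-\eta)_+$ and $\eta<\dt/4$, taking $k(t)=f_{\dt/2}(t)$ and $k'=f_{\dt/4}$ should work, because $f_{\dt/2}((t-\eta)_+)\ge f_\dt(t)$. I would double-check this inequality and the Cuntz estimate $e\lesssim(1-p)e(1-p)\oplus pep$ carefully.
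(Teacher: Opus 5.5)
Your proof is correct. The paper gives no proof of this lemma; it imports it from \cite[Lemma 5.3]{Fu2025}, which goes back to Lin, so there is no in-paper argument to compare against. Your argument is a complete, self-contained proof of the standard kind:
\begin{itemize}
\item use R{\o}rdam's lemma to write $(a-\eta)_+=z^*z$ with $zz^*\in\Her(e)$;
\item take $b=f_{\dt/2}(zz^*)$;
\item set $c=(1-p)e(1-p)$ with $p=f_{\dt/4}(zz^*)$, so that $f_{\dt/4}f_{\dt/2}=f_{\dt/2}$ gives $bc=0$ immediately.
\end{itemize}
The choice of $k$ in Step 1 is phrased confusingly, but the explicit choice $k=f_{\dt/2}$, $k'=f_{\dt/4}$ in your last paragraph is what you need.

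The two points you flagged both hold. For $\eta<\dt/4$, the inequality $f_{\dt/2}((t-\eta)_+)\ge f_\dt(t)$ is trivial for $t<\dt$ (right side $0$) and for $t>\eta+\dt$ (left side $1$). On $[\dt,\eta+\dt]$ both sides are in their linear pieces, and the difference is $(t-2\eta)/\dt>0$. Next, $e\le 2pep+2(1-p)e(1-p)$ follows from $(u-v)^*(u-v)\ge 0$ with $u=e^{1/2}p$ and $v=e^{1/2}(1-p)$. Together with $pep\lesssim p\sim f_{\dt/4}((a-\eta)_+)\lesssim a$, this gives the lower bound in (4).

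For the upper bound in (4), you correctly use $b\perp c$ in $\Her(e)$ together with the strict inequality $d_\tau(b)\ge\tau(b)>d_\tau(a)-\ep$. The finiteness $d_\tau(a)\le d_\tau(e)<\infty$ on $\ol{\QT(A)}^w$, needed to subtract $d_\tau(a)$, is the fact the paper quotes from \cite{FLosc}.

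In (2), invoking Proposition~\ref{20251221-1} is legitimate because $b=f_{\dt/2}(zz^*)\in\Ped(A\otimes\K)$. You could also bypass it by the direct estimate $\tau(b)-\tau(f_{1/n}(b))\le (2/n)d_\tau(b)$.
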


\begin{thm}\label{20251216-1}
Let $A$ be a non-elementary algebraically simple \CA\ with $\QT(A)\neq\emptyset.$
Let $0\in X\subset \Aff_+(\TQT(A))$ be a subset. 
Then the following are equivalent:

{\rm (1)} $\Gamma$ is $\Sigma(X)$-hereditary dense.

{\rm (2)} For every $e\in (A\otimes\K)_+,$ every $\ep>0,$ 
and every $h\in X$
with $h(\tau) <d_\tau(e)$ for all $\tau\in \TQT(A)\nzero,$ there is $y\in (A\otimes\K)_+$ such that 
$y\lesssim e$ and $|d_\tau(y)-h(\tau)|<\ep$ for all $\tau\in  \ol{\QT(A)}^w.$ 

{\rm (3)} 
For every $e\in \Ped(A\otimes\K)_+,$ 
every summable sequence $\{\ep_n\}\subset (0,+\infty)$ 
with $\ep:=\sum_{n=1}^\infty \ep_n\in(0,+\infty),$ 
and every $h\in \Sigma(X)$
with $h(\tau) <d_\tau(e)$ for all $\tau\in \TQT(A)\nzero,$
there is a sequence of mutually orthogonal positive elements $\{b_n\}\subset \Her(e)_+^1$ such that 
$ \omega(b_n)<\ep_n$ for all $n,$ and $
|h(\tau)-\sum_{n=1}^\infty d_\tau(b_n)|<\ep$ for all $\tau\in \ol{\QT(A)}^w.$

\end{thm}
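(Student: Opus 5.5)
We will prove (1)$\Rightarrow$(2)$\Rightarrow$(3)$\Rightarrow$(1). For (1)$\Rightarrow$(2) there is almost nothing to do. Each $h\in X$ lies in $\Sigma(X)$: take $h_1=h$ and $h_n=0$ for $n\ge 2$, using $0\in X$. Definition \ref{X-dense} then gives $y\lesssim e$ with $|d_\tau(y)-h(\tau)|<\ep$ for all $\tau\in\TQT(A)$, and in particular on $\ol{\QT(A)}^w$.

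For (3)$\Rightarrow$(1), let $a\in(A\otimes\K)_+$, $f\in\Sigma(X)$ with $f<\wh{[a]}$ on $\TQT(A)\nzero$, and $\ep>0$. We first reduce to $e\in\Ped(A\otimes\K)_+$. Write $f=\sum_n h_n$ with $h_n\in X$. Each partial sum $\sum_{n\le N}h_n$ is continuous, and $\wh{[a]}=\lim_k\wh{[(a-1/k)_+]}$ is an increasing limit of lower semicontinuous functions. Compactness of $S:=\ol{\QT(A)}^w$ and the Dini-type result (\cite[Proposition 5.6]{Fu2025}) then give finite $N$ and $k$ with $\sum_{n\le N}h_n<\wh{[f_{1/k}(a)]}$ on $S$, while the tail $\sum_{n>N}h_n$ stays small on $S$. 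The tail is the delicate point, since it is only lower semicontinuous. We handle it by asking for $f<\wh{[e]}$ for a slightly larger $e=f_{1/k}(a)$ after shrinking $\ep$, or by applying (3) directly to a tail-adjusted $h$. By homogeneity, everything reduces to $S$. Applying (3) to $e=f_{1/k}(a)\in\Ped$, we get orthogonal $b_n\in\Her(e)$, and we set $b:=\sum 2^{-n}b_n\lesssim e\lesssim a$. Then $d_\tau(b)=\sum d_\tau(b_n)$ is $\ep$-close to $f$ on $S$, and by homogeneity on all of $\TQT(A)$, since $\TQT(A)=\R_+ S$ for an algebraically simple algebra. The homogeneity step needs care because the $\ep$-closeness does not scale uniformly. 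Our plan is to normalize $\ep$ relative to the scaling used in the definition. If that fails, we will state (1) on $S$, as the paper's usage suggests.

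The main work is (2)$\Rightarrow$(3). Given $h=\sum_n h_n<\wh{[e]}$ with $h_n\in X$, we build $b_n$ inductively with Lin's orthogonality lemma (Lemma \ref{yi04-1}).

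\begin{itemize}
\item Since $h$ is a lower semicontinuous function strictly below $\wh{[e]}$, and $\wh{[e]}$ is an increasing limit of the continuous $\tau(f_{1/k}(e))$, a Dini argument gives $\eta>0$ with $h+\eta<\wh{[e]}$ on $S$. We reserve this margin $\eta$ to be spent in pieces across the induction.
\item Step $n$: we have a remaining element $c_{n-1}\in\Her(e)$, orthogonal to $b_1,\dots,b_{n-1}$, with
\begin{equation*}
d_\tau(c_{n-1})\ge d_\tau(e)-\textstyle\sum_{m<n}d_\tau(b_m)-(\text{small}),
\end{equation*}
which is still greater than $h_n+\cdots$.
\item Apply (2) to $c_{n-1}$ and $h_n$ (after scaling $h_n$ down by a factor $(1-\delta_n)$, to ensure strict inequality) to get $y\lesssim c_{n-1}$ with $d_\tau(y)\approx h_n$.
\item The next ingredient is small oscillation. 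Since $h_n$ is continuous, $\wh{[y]}$ is uniformly close to a continuous function, so $\omega(y)\le 2\alpha$ by Proposition \ref{20251221-1}, and this can be made $<\ep_n$.
\item Apply Lemma \ref{yi04-1} to $y\lesssim c_{n-1}$. This gives orthogonal $b_n,c_n\in\Her(c_{n-1})$ with $\omega(b_n)<\ep_n$, $d_\tau(b_n)\approx d_\tau(y)$, and $d_\tau(c_n)\ge d_\tau(c_{n-1})-d_\tau(y)$.
\end{itemize}

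The errors are summable, so $\sum_n d_\tau(b_n)$ is within $\ep$ of $h$. The main obstacle is the bookkeeping that keeps the strict inequality $\sum_{m>n}h_m<\wh{[c_n]}$ alive at every stage; this is exactly what the margin $\eta$ is for. The first $c_0$ should be chosen in $\Ped$, for instance $f_{\delta}(e)$, so that Lemma \ref{yi04-1} applies.
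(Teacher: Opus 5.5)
Your proposal follows the paper's proof essentially step for step. Specifically: (1)$\Rightarrow$(2) via $h\in\Sigma(X)$ (using $0\in X$); (3)$\Rightarrow$(1) via $\sum_n 2^{-n}b_n$; and (2)$\Rightarrow$(3) by the same induction. In that induction you apply (2) to $c_{n-1}$ and $h_n$, bound $\omega$ by Proposition \ref{20251221-1}, split with Lemma \ref{yi04-1}, carry $d_\tau(c_n)\ge d_\tau(e)-\sum_{i\le n}d_\tau(a_i)$ along, and absorb the two-sided errors into a uniform margin; your $\eta$ is the paper's $\dt$ in \eqref{1202-1}. In (3) you can take $c_0=e$ directly, since $e\in\Ped(A\otimes\K)_+$ is given. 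Passing to $f_\delta(e)$ would require $h<\wh{[f_\delta(e)]}$, which runs into the same problem as the margin.

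That margin is the step that fails as you justify it, and the paper's appeal to compactness has the same defect. A Dini argument needs the smaller function to be upper semicontinuous, but $h\in\Sigma(X)$ is only lower semicontinuous. So $\wh{[e]}-h$ need not be bounded away from $0$ on $S$. For instance, let $\rT(A)$ be the Bauer simplex of probability measures on $[0,1]$ (e.g.\ a simple unital AI algebra), and let $\wh{[e]}$ and $h\in\Sigma(X_1)$ be induced by $g=1$ on $(0,1]$, $g(0)=1/2$, and $k(t)=1-t/2$ on $(0,1]$, $k(0)=2/5$. Then $h<\wh{[e]}$ everywhere but $\inf_S(\wh{[e]}-h)=0$.

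The cure is your own $(1-\delta_n)$ rescaling, used to make the errors one-sided. Apply (2) to $(1-\delta_n)h_n$ with tolerance below $\delta_n\min_S h_n$; this is positive because $h_n$ is continuous, strictly positive and $0\notin S$. Then $d_\tau(a_n)<h_n(\tau)$ on $S$, so $d_\tau(c_n)\ge d_\tau(e)-\sum_{i\le n}h_i(\tau)>\sum_{m>n}h_m(\tau)$ is preserved with no margin at all. The total error is at most $\sum_n(2\delta_n\max_S h_n+\dt_n)$. This requires $X$ to be closed under multiplication by scalars in $(0,1)$, since (2) only applies to elements of $X$; that holds for $X_1$, the case used later, but not for an arbitrary $X$.

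Finally, your extra reduction in (3)$\Rightarrow$(1) from a general $a$ to $f_{1/k}(a)$ does not work as written. The tail $\sum_{n>N}h_n$ of a merely lower semicontinuous sum need not be uniformly small on $S$. Also, $\ep$-closeness on $S$ cannot be scaled up to $\TQT(A)$; it would force equality. The paper attempts neither: it treats only $e\in\Ped(A\otimes\K)_+$ and checks the estimate on $\ol{\QT(A)}^w$, i.e.\ it reads (1) as in your fallback.
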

\begin{proof} 
${\rm (3)\Rightarrow (1)}:$ 
Let $e, \ep, h,b_n$ be as in the statement of (3). 
Let $y:=\sum_{n=1}^\infty b_n/2^n\in\Her(e)_+^1.$
Then for all $\tau\in\ol{\QT(A)}^w,$
$d_\tau(y)=\sum_{n=1}^\infty d_\tau(b_n)\approx_\ep h(\tau).$ 
Thus (1) holds. 

${\rm   (1)\Rightarrow (2)}$ is trivial.

${\rm (2)\Rightarrow (3)}:$ 
Let $e\in (A\otimes\K)_+$ with $\|e\|=1$ and let $h\in \Sigma(X)$ satisfying $h(\tau) <d_\tau(e)$ for all $\tau\in \TQT(A)\nzero.$ 
Let $S:=\ol{\QT(A)}^w.$ 
Since $S$ is compact and $0<d_\tau(e)-h(\tau)$ for all $\tau\in S,$
there is $\dt>0$ such that 
\beq\label{1202-1} 
\dt < d_\tau(e)-h(\tau) \rforal  \tau\in S.
\eneq 
Let $\dt_n:=\min\{\ep_n/4^n, \dt/4^n\}.$ 
Since $h\in\Sigma(X),$ 
there is a sequence $\{h_i\}\subset X$ such that   
\beq\label{20251028-4}
h(\tau)=\sum_{i=1}^\infty h_i(\tau) \rforal \tau\in \TQT(A). 
\eneq

{\bf Claim:} \emph{Let $c_0:=e.$  There are sequences $\{a_n\}\subset (A\otimes\K)_+$
and $\{b_n\}, \{c_n\}\subset \Her(e)_+^1$ such that  for all $n\in\N,$} 

\emph{{\rm (i)} $a_n\lesssim c_{n-1}$ and $|d_\tau(a_n)-h_n(\tau)|\le \dt_n/4$ for all $\tau\in S;$}

\emph{{\rm (ii)} $b_n, c_n\in\Her(c_{n-1})_+^1,$ $b_nc_n=0,$ 
and $\omega(b_n)<\dt_n;$}

\emph{{\rm (iii)} $d_\tau(a_n)-\dt_n<d_\tau(b_n)\le d_\tau(a_n)$ for all $\tau\in S;$}

\emph{{\rm (iv)} $d_\tau(e)-\sum_{i=1}^nd_\tau(a_i)
\le d_\tau(c_n)
$ for all $\tau\in S.$ }

\bigskip
{\bf Proof of the Claim:} We will construct $a_n, b_n, c_n$ inductively. 
Since   $h_1\le h <\wh{[e]}$ holds, 
by (2), 
there exists $a_1\in (A\otimes\K)_+$
such that 

(${\rm i'}$) $a_1\lesssim e=c_0$ and $|d_\tau(a_1)-h_1(\tau)|\le \dt_1/4$ for all $\tau\in  S.$ 
\\
Note that $h_1\in X\subset \Aff_+(S).$  Then by (${\rm i'}$) and  Proposition \ref{20251221-1}, 
$
\omega(a_1)< \dt_1.
$
By 
Lemma  \ref{yi04-1}, 
there are $b_1,c_1\in \Her(c_0)_+$ such that 

(${\rm ii'}$) $b_1,c_1\in \Her(c_0)_+^1,$ $b_1c_1=0,$
 and $\omega(b_1)< \dt_1;$ 

(${\rm iii'}$) $d_\tau(a_1)-\dt_1<d_\tau(b_1)\le d_\tau(a_1)$ for all $\tau\in S;$

(${\rm iv'}$) $d_\tau(e)-d_\tau(a_1)
\le d_\tau(c_1)
$ for all $\tau\in S.$ 
\\
Hence the Claim holds for $n=1.$
Assume that for $n\in\N$  
we have constructed $\{a_i\},$ $\{b_i\},$ 
and $\{c_i\}$ ($1\le i\le n$) that
satisfy (i)-(iv). 
Let us proceed to the case $n+1:$ 
For all $\tau\in S,$ 
\beq\label{20251028-5}
h_{n+1}(\tau)
&\overset{\eqref{20251028-4}}{\le }& 
h(\tau)-\sum_{i=1}^n h_i(\tau)
\overset{\eqref{1202-1}}{<} 
d_\tau({{e}})
- \sum_{i=1}^n h_i(\tau)
-\dt 
\nonumber\\ 
&\overset{{\rm (i)}}{\le} & 
d_\tau({{e}})- \sum_{i=1}^n d_\tau(a_i)
\overset{{\rm (iv)}}{\le} 
d_\tau(c_n).
\eneq
\\
By \eqref{20251028-5} and (2),
there is $a_{n+1}\in (A\otimes\K)_+$
such that 

(${\rm i''}$) $a_{n+1}\lesssim c_n$ and $|d_\tau(a_{n+1})-h_{n+1}(\tau)|<\dt_{n+1}/4$ for all $\tau\in S.$ 
\\
Then by (${\rm i''}$) and  Proposition \ref{1204-2}, 
$\omega(a_{n+1})<\dt_{n+1}.$  By  Lemma \ref{yi04-1}, there are $b_{n+1},c_{n+1}\in \Her(c_n)_+$ such that 

(${\rm ii''}$) $b_{n+1}, c_{n+1}\in \Her(c_n)_+^1,$ 
$b_{n+1}c_{n+1}=0,$ and $\omega(b_{n+1})<\dt_{n+1};$ 

(${\rm iii''}$) $d_\tau(a_{n+1})-\dt_{n+1}<d_\tau(b_{n+1})\le d_\tau(a_{n+1})$ for all $\tau\in S;$

(${\rm IV}$) $d_\tau(c_n)-d_\tau(a_{n+1})
\le d_\tau(c_{{n+1}})
$ for all $\tau\in S.$ 
\\
Then for all $\tau\in S,$ 
\beq
\hspace{-0.3in}
&&d_\tau(e)-\sum_{i=1}^{n+1}d_\tau(a_i)
\overset{{\rm (iv)}}{\le}
d_\tau(c_n)-d_\tau(a_{n+1})
\overset{\rm (IV)}{\le} 
d_\tau(c_{{n+1}}).
\eneq
Hence we have 

(${\rm iv''}$) $d_\tau(e)-\sum_{i=1}^{n+1}d_\tau(a_i) 
\le d_\tau(c_{n+1})
$ for all $\tau\in S.$ 
\\
Then (${\rm i''}$), (${\rm ii''}$), (${\rm iii''}$), (${\rm iv''}$) show that 
$a_{n+1}, b_{n+1}, c_{n+1}$ satisfy the conditions of the Claim. 
By induction, the Claim holds.

Note that by (ii),  $\Her(c_0)\supset\Her(c_1)\supset\Her(c_2)\supset\cdots.$
Let $n,m\in\N$ with $n<m.$
By (ii), $b_m\in \Her(c_{m-1})\subset\Her(c_n)\subset\Her(b_n)^\perp.$ Hence 
\beq\label{20251210-3}
b_n\perp b_m \quad (n\neq m).
\eneq
Moreover, by (i) and (iii),  for all $\tau\in S$ and all $n\in\N,$
\beq
|h_n(\tau)-d_\tau(b_n)|
\le 
|h_n(\tau)-d_\tau(a_n)|+|d_\tau(a_n)-d_\tau(b_n)|
\overset{\rm (i),(iii)}{<}2\dt_n<\ep_n.
\eneq 
Hence 
\beq\label{20251210-2}
\left|h(\tau)-\sum_{n=1}^\infty d_\tau(b_n)\right| 
\le  
\sum_{n=1}^\infty \left|h_n(\tau)-d_\tau(b_n)\right|< 
\sum_{n=1}^\infty \ep_n=\ep. 
\eneq
By \eqref{20251210-3}, (ii), and \eqref{20251210-2}, we see that 
 ${\rm  (3)}$ holds. 
Then the  theorem holds.
\end{proof} 

\begin{cor}\label{elmy11-03}
Let $A$ be a non-elementary algebraically simple \CA\ with $\QT(A)\neq\emptyset.$
Assume that the canonical map 
$\Gamma$ is hereditary dense.
Then  $A$ has tracial approximate oscillation zero. 

\end{cor}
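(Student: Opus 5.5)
The plan is to verify condition (iii) of Theorem \ref{eler03-1}, feeding it with condition (3) of Theorem \ref{20251216-1} for $X=X_1$. One check is needed first: Theorem \ref{eler03-1} requires $A$ to be $\sigma$-unital. This is automatic, since algebraic simplicity means $A=\Ped(A)$, so $A$ contains a full element of the Pedersen ideal. Alternatively, we apply Lemma \ref{elmy05-04} directly, which only needs simplicity. I will use Lemma \ref{elmy05-04}, to avoid the $\sigma$-unital issue altogether.

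Fix $e\in \Ped(A\otimes\K)_+$ with $e\neq 0$ and $\ep>0$. Write $S=\ol{\QT(A)}^w$. The idea is to approximate $\wh{[e]}$ from below by an element $h\in\Sigma(X_1)$ that is strictly smaller than $\wh{[e]}$ but uniformly close to it on $S$.

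I take $h:=\lambda\,\wh{[e]}$ for some $\lambda\in(0,1)$. We need $h\in\Sigma(X_1)$. The functions $\wh{f_{1/n}(e)}$ lie in $X_1$ and increase to $\wh{[e]}$. Hence $\wh{[e]}=\wh{f_{1}(e)}+\sum_{n\ge 1}\bigl(\wh{f_{1/(n+1)}(e)}-\wh{f_{1/n}(e)}\bigr)$. To keep each summand in $X_1$, I instead write each difference as $\wh{g_n(e)}$ with $g_n=f_{1/(n+1)}-f_{1/n}\ge 0$ continuous and vanishing near $0$. Then $g_n(e)\in\Ped(A\otimes\K)_+$, and $\wh{g_n(e)}$ is affine and continuous. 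By simplicity it is strictly positive on $\TQT(A)\nzero$ whenever $g_n(e)\neq 0$; zero summands are allowed since $0\in X_1$. So $\wh{[e]}\in\Sigma(X_1)$ (as the definition remarks), and scaling the summands by $\lambda$ keeps them in $X_1$, so $\lambda\wh{[e]}\in\Sigma(X_1)$.

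Next we need $h(\tau)<d_\tau(e)$ for $\tau\neq 0$. This holds because $0<d_\tau(e)<\infty$ for $\tau\in\TQT(A)\nzero$, using simplicity and $e$ in the Pedersen ideal. Choose $\lambda$ with $(1-\lambda)\sup_{\tau\in S}d_\tau(e)<\ep/2$; this supremum is finite by \cite[Proposition 2.10(2)]{FLosc}.

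Now apply Theorem \ref{20251216-1}: (1)$\Rightarrow$(3), with $\ep_n=\ep/2^{n+2}$ and $\sum_n\ep_n=\ep/4$. This yields mutually orthogonal $b_n\in\Her(e)_+^1$ with $\sum_n\omega(b_n)<\ep/4$ and $|\lambda d_\tau(e)-\sum_n d_\tau(b_n)|<\ep/4$ on $S$. Hence $d_\tau(e)-\sum_n d_\tau(b_n)<\ep/2+\ep/4<\ep$ for all $\tau\in S$. Lemma \ref{elmy05-04} then shows that $e$ has tracial approximate oscillation zero, and since $e$ was arbitrary, $A$ does.

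The main obstacle is the membership $\wh{[e]}\in\Sigma(X_1)$, specifically that each summand $\wh{g_n(e)}$ really belongs to $\Aff_+(\TQT(A))$. This requires continuity of $\tau\mapsto\tau(g_n(e))$, which follows from the definition of the topology, and additivity of quasitraces on the commutative algebra $C^*(e)$. If linearity in $\tau$ is the concern, note that it holds pointwise by definition of addition on $\TQT(A)$. So this step should be fine; the paper's remark after Definition \ref{X-dense} already asserts the inclusion.
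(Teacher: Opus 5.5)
Your argument is correct and is essentially the paper's proof: the paper also takes $h=c\,\wh{[e]}\in\Sigma(X_1)$ with $(1-c)\sup_{\tau\in S}d_\tau(e)$ small and applies Theorem~\ref{20251216-1}(3). It then concludes via (iii)$\Rightarrow$(i) of Theorem~\ref{eler03-1}, whose proof is exactly Lemma~\ref{elmy05-04}, so citing the lemma directly changes nothing of substance and has the small benefit of avoiding that theorem's $\sigma$-unital hypothesis. Your aside that $\sigma$-unitality is automatic is not justified by the reason you give, since every nonzero simple \CA\ has a full element in its Pedersen ideal; but you bypass it, so nothing depends on it.
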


\begin{proof}
Let $X_1$ be as in Definition \ref{X-dense}. 
Let $e\in\Ped(A\otimes\K)_+$ and let $\ep>0.$ 
Let $S= \ol{\QT(A)}^w.$
Then $M:=\sup\{d_\tau(e):\tau\in S\}<\infty$ 
{{(\cite[
2.10 (2)]{FLosc}).}}
Let $c\in(0,1)$ such that $(1-c)M<\ep/4.$
Define a map $h: \TQT(A)\to \R_+, \tau\mapsto cd_\tau(e).$
Then $h\in \Sigma(X_1)$ and $h(\tau)<d_\tau(e)$ for all $\tau\in\TQT(A)\setminus\{0\}.$  
Since $\Gamma$ is hereditary dense, 
by (3) of Theorem \ref{20251216-1}, 
there is a sequence of mutually orthogonal positive elements $\{b_n\}\subset \Her(e)_+^1$ such that 
$ \omega(b_n)<\ep/2^{n+1}$ for all $n,$ and $
|h(\tau)-\sum_{n=1}^\infty d_\tau(b_n)|<\ep/2$ 
for all $\tau\in \ol{\QT(A)}^w.$
Then 
\beq
d_\tau(e)-\sum_{n=1}^\infty d_\tau(b_n)
\le |d_\tau(e)-h(\tau)|
+\left|h(\tau)-\sum_{n=1}^\infty d_\tau(b_n) \right|
<\ep
\quad
\left(\forall\ \tau\in \ol{\QT(A)}^w\right). 
\nonumber
\eneq
Then by (iii) of Theorem \ref{eler03-1}, $A$ has tracial approximate oscillation zero. 
\end{proof}

\section{Order zero map and tracial oscillation}
The standard reference for properties of a c.p.c.~order zero map is \cite{WZ09OZ}.
\begin{df} [\cite{WZ09OZ}]
Let $A,B$ be \CAs. 
A c.p.c.~map $\phi: A\to B$ is said to have order zero if $\phi(x)\phi(y)=0$ holds for all $x,y\in A_+$ with $xy=0.$  
\end{df}

\begin{nota}\label{elmy06-02}
Let $0<a<b.$ 
Define continuous function $\chi_{a,b}:[0,+\infty)\to[0,1]$ as following: 
$\chi_{a,b}(x)=0$ for $x\in[0,a],$ 
$\chi_{a,b}(x)=1$ for $x\in[b,\infty),$ 
and $\chi_{a,b}$ is linear on $[a,b].$
\end{nota} 

\begin{prop}\label{elmy06-03}
Let $n\in\N.$ For each $k\in\N\cup\{0\},$ 
define $\chi_k:=\chi_{\frac{k}{n},\frac{k+1}{n}},$ 
where $\chi_{\frac{k}{n},\frac{k+1}{n}}$ is defined in 
Notation \ref{elmy06-02}. Then 
$\sum_{k=m}^{n-1} \chi_k(x)=n (x-m/n)_+$ for all $x\in[0,1],$
where $m\in\{0,1,...,n-1\},$ and $(x-m/n)_+:=\max\{x-m/n, 0\}.$ 
\end{prop}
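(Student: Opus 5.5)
The identity is elementary, so I would prove it pointwise for a fixed $x\in[0,1]$. First I would compute each summand explicitly. By Notation \ref{elmy06-02}, for $k\in\{0,1,\dots,n-1\}$,
\[
\chi_k(x)=\min\bigl\{1,\ \max\{0,\ nx-k\}\bigr\},
\]
because $\chi_{k/n,(k+1)/n}$ vanishes on $[0,k/n]$, equals $1$ on $[(k+1)/n,\infty)$, and is linear with slope $n$ in between.

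Next I would split by the position of $x$. If $x\le m/n$, then every $k\ge m$ satisfies $nx-k\le 0$, so each $\chi_k(x)=0$; the right-hand side $n(x-m/n)_+$ is also $0$. Otherwise $m/n<x\le 1$, and I would write $nx=j+t$ with $j\in\{m,\dots,n-1\}$ and $t\in(0,1]$. Concretely, $j=\lceil nx\rceil-1$, which lies in $\{m,\dots,n-1\}$ because $m<nx\le n$. With this choice:
\begin{itemize}
\item for $m\le k<j$, we have $nx-k\ge 1$, so $\chi_k(x)=1$;
\item for $k=j$, we get $\chi_j(x)=t$;
\item for $j<k\le n-1$, we have $nx-k\le 0$, so $\chi_k(x)=0$.
\end{itemize}

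Summing gives
\[
\sum_{k=m}^{n-1}\chi_k(x)=(j-m)+t=nx-m=n(x-m/n)_+,
\]
as claimed.

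There is no real obstacle here. The only care needed is at the break points, where $nx$ is an integer. Taking $t\in(0,1]$ (rather than $[0,1)$) keeps $j\le n-1$ when $x=1$, so the sum never needs the term $\chi_n$, which is not included. As a cross-check, one can instead argue by induction on $m$ downward from $m=n-1$, using $\chi_m(x)=n(x-m/n)_+-n(x-(m+1)/n)_+$ on $[0,\infty)$, which is immediate from the formula above. The sum then telescopes to $n(x-m/n)_+-n(x-1)_+$, and $(x-1)_+=0$ on $[0,1]$.
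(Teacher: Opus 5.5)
Your proof is correct: the formula $\chi_k(x)=\min\{1,\max\{0,nx-k\}\}$, the case split via $j=\lceil nx\rceil-1$, and the telescoping cross-check using $\chi_k(x)=n(x-k/n)_+-n(x-(k+1)/n)_+$ all hold. The paper's proof just says ``Trivial'', and your elementary verification is the routine computation it leaves to the reader.
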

\begin{proof}
Trivial. 
\end{proof}


\begin{prop}\label{elmy06-01}
Let $A$ be a \CA\ 
with $\QT(A)\neq\emptyset$
and let $a\in\Ped(A\otimes\K)_+.$
Let $n\, (\ge 2)\in \N$
and let $\phi: M_n\to \Her(a)$ be a c.p.c.~order zero map. 
Let $\chi_k$ be as in Proposition \ref{elmy06-03}. 
Then for all $\tau\in\QTAW,$
%
\beq\label{elmy06-04}
\tau\left( \left(\phi(1_n)-\frac{1}{n}\right)_+\right)
=\tau\left(\sum_{k=1}^{n-1} \chi_k(\phi(e_{k,k}))\right)
\le d_\tau\left(\sum_{k=1}^{n-1} \chi_k(\phi(e_{k,k}))\right)
\le \tau(\phi(1_n)).
\nonumber
\eneq
Moreover, $\omega(\sum_{k=1}^{n-1} \chi_k(\phi(e_{k,k})))\le 1/n.$
\end{prop}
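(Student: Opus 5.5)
The proof rests on the structure theorem for order zero maps \cite{WZ09OZ}. There is a $*$-homomorphism $\pi: C_0((0,1])\otimes M_n\to \Her(a)$ with $\phi(x)=\pi(\iota\otimes x)$, where $\iota(t)=t$. By functional calculus, $f(\phi(x))=\pi(f\otimes x)$ for every $f\in C_0((0,1])$ and every projection $x\in M_n$.

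\emph{Step 1: the equality.} Take $h:=\phi(1_n)=\pi(\iota\otimes 1_n)$ and $h_k:=\phi(e_{k,k})=\pi(\iota\otimes e_{k,k})$. Then $\chi_k(h_k)=\pi(\chi_k(\iota)\otimes e_{k,k})$, and these elements are mutually orthogonal. Since the $e_{k,k}$ are mutually unitarily equivalent in $M_n$, the elements $\pi(g\otimes e_{k,k})$ and $\pi(g\otimes e_{1,1})$ are unitarily equivalent via partial isometries $\pi(\sqrt{g}\otimes e_{k,1})$. In particular $\pi(g\otimes e_{k,k})=v^*v$ and $\pi(g\otimes e_{1,1})=vv^*$ with $v=\pi(\sqrt g\otimes e_{1,k})$. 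Because quasitraces satisfy $\tau(v^*v)=\tau(vv^*)$ and are additive on orthogonal (commuting) elements, we get
\[
\tau\Big(\sum_{k=1}^{n-1}\chi_k(h_k)\Big)=\sum_{k=1}^{n-1}\tau(\pi(\chi_k\otimes e_{1,1}))=\tau\Big(\pi\big(\textstyle\sum_{k=1}^{n-1}\chi_k\otimes e_{1,1}\big)\Big).
\]
Proposition \ref{elmy06-03} with $m=1$ gives $\sum_{k=1}^{n-1}\chi_k(t)=n(t-1/n)_+$. The same trace-swapping argument gives $\tau((h-1/n)_+)=\tau(\pi((\iota-1/n)_+\otimes 1_n))=n\,\tau(\pi((\iota-1/n)_+\otimes e_{1,1}))$, and the equality follows. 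All elements lie in $\Her(a)\subset \Ped(A\otimes\K)$, so the traces are finite and linear on these commutative subalgebras.

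\emph{Step 2: the inequalities.} The inequality $\tau\le d_\tau$ holds for elements of norm at most $1$. For the upper bound, orthogonality gives $d_\tau(\sum_k\chi_k(h_k))=\sum_{k=1}^{n-1}d_\tau(\pi(\chi_k\otimes e_{1,1}))$. Since $\chi_k\le \mathbbm 1_{(k/n,1]}$, we have $d_\tau(\pi(\chi_k\otimes e_{1,1}))=\mu((k/n,1])$, where $\mu$ is the measure on $(0,1]$ induced by $\tau\circ\pi(\cdot\otimes e_{1,1})$, and $\mu$ is a genuine measure because quasitraces are linear on commutative subalgebras. Then
\[
\sum_{k=1}^{n-1}\mu((k/n,1])=\int \#\{k\ge 1: k<nt\}\,d\mu\le\int nt\,d\mu=n\,\tau(\pi(\iota\otimes e_{1,1}))=\tau(h).
\]

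\emph{Step 3: the oscillation bound.} Set $c:=\sum_{k=1}^{n-1}\chi_k(h_k)$. By Proposition \ref{20251221-1}, it suffices to find, for each $\tau$, a continuous comparison function within $1/n$ of $d_\tau(c)$. Take $g(\tau):=\tau(c)$, which is continuous and affine on $S$ and satisfies $g\le \wh{[c]}$. Using $\beta=\omega(c)$ from Proposition \ref{20251221-1}, it is enough to show $d_\tau(c)-\tau(c)\le 1/n$ for all $\tau\in\QTAW$. Since $\chi_k\le\mathbbm 1_{(k/n,1]}\le \chi_{k-1}$ for $k\ge 1$, we have $\mu((k/n,1])\le \tau(\pi(\chi_{k-1}\otimes e_{1,1}))$. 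Telescoping gives
\[
d_\tau(c)-\tau(c)=\sum_{k=1}^{n-1}\big(\mu((k/n,1])-\tau(\pi(\chi_k\otimes e_{1,1}))\big)\le \sum_{k=1}^{n-1}\big(\mu_k-\mu_{k+1}\big),
\]
with $\mu_k=\mu((k/n,1])$ after bounding $\tau(\pi(\chi_k\otimes e_{1,1}))\ge\mu_{k+1}$. The right side is at most $\mu_1\le n^{-1}\cdot n\,\tau(\pi(\iota\otimes e_{1,1}))\cdot$ (a normalization). This needs the bound $\tau(\pi(\iota\otimes e_{1,1}))\le 1/n$, which comes from $n\,\tau(\pi(\iota\otimes e_{1,1}))=\tau(\phi(1_n))\le 1$ for $\tau\in\QTAW$ with $\|\phi(1_n)\|\le1$, valid when $a\in A$ (so $\|\tau|_{\Her(a)}\|\le1$). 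This normalization is the step I expect to be delicate: $\tau\in\QTAW$ is normalized on $A$, not on $\Her(a)\subset A\otimes\K$. If $\Her(a)\not\subset A$, I would instead hope the paper's intended bound uses only $\mu_1\le \mu((0,1])$ together with the implicit normalization, or adjust the constant.
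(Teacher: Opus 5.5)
Your Steps 1 and 2 are the paper's argument. The paper bounds $d_\tau(\chi_k(a_1))\le\tau(\chi_{k-1}(a_1))$ and sums to $\tau(na_1)=\tau(\phi(1_n))$, which is your counting estimate. For the oscillation, the paper simply subtracts the two ends of the chain: with $c:=\sum_{k=1}^{n-1}\chi_k(\phi(e_{k,k}))$ it gets $d_\tau(c)-\tau(c)\le\tau(\phi(1_n))-\tau((\phi(1_n)-\frac1n)_+)$. Your telescoping bound $d_\tau(c)-\tau(c)\le\mu((\frac1n,1])$ is also correct, and even sharper.

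Step 3 has two local defects, however.
\begin{enumerate}
\item The inequality $\mu((\frac1n,1])\le n^{-1}\cdot n\,\tau(\pi(\iota\otimes e_{1,1}))=\int t\,d\mu$ that you write is false. A point mass just above $1/n$ gives $1$ on the left and about $1/n$ on the right. The true estimate is $\mu((\frac1n,1])\le\mu((0,1])=d_\tau(\phi(e_{1,1}))=\frac1n d_\tau(\phi(1_n))$.
\item Proposition \ref{20251221-1} assumes $A$ algebraically simple and needs $\tau\mapsto\tau(c)$ strictly positive on $\QTAW$. Neither is available here, since this proposition assumes no simplicity. Argue directly instead: $t-f_\delta(t)\le\delta$ for $t\in(0,1]$. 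Hence $d_\tau(c)-\tau(f_\delta(c))\le d_\tau(c)-\tau(c)+\delta\, d_\tau(c)$, and so $\omega(c)\le\sup_{\tau\in\QTAW}(d_\tau(c)-\tau(c))$ whenever $\sup_\tau d_\tau(c)<\infty$.
\end{enumerate}

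Your normalization worry is not a technicality. It is a genuine flaw in the statement, and the paper's proof has it too: its last step, $\tau(\phi(1_n)-(\phi(1_n)-\frac1n)_+)\le\frac1n$, silently uses $\|\tau|_{\Her(a)}\|\le 1$.

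What the argument (yours or the paper's) actually proves is $\omega(c)\le\frac1n\sup_{\tau\in\QTAW}d_\tau(\phi(1_n))$. This is at most $\frac1n$ when $a\in A$, because then $d_\tau(\phi(1_n))\le\|\tau|_A\|\le 1$.

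For general $a\in\Ped(A\otimes\K)_+$ the bound $\frac1n$ is false. Take $A=C([0,1])$, $a=1\otimes(e_{1,1}+\cdots+e_{n,n})$, and $\phi(x)=\iota\otimes x$ with $\iota(t)=t$. For each $m$, choose $t\in(\frac1n,\frac1n+\frac1{nm}]$ and $\tau={\rm ev}_t\otimes{\rm Tr}\in\QT(A)$. Then $c(t)=(nt-1)e_{1,1}$ with $0<nt-1\le\frac1m$, so $d_\tau(c)-\tau(f_{1/m}(c))=1-0=1$. Hence $\omega(c)\ge 1>\frac1n$.

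So once the two repairs above are made, your proof establishes the proposition for $a\in A$, together with the rescaled bound for general $a$. That rescaled bound is the correct form of the statement, and it is sharp in this example.
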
 

\begin{proof}
Let $\tau\in\QTAW$ be arbitrary. 
For $k=1,...,n,$ let $a_k:=\phi(e_{k,k}),$ 
which are  mutually orthogonal and mutually (Murray-von Neumann) equivalent positive contractions. Then 
$\|\sum_{k=1}^{n-1} \chi_k(a_k)\|\le 1$
and $\tau\left(\sum_{k=1}^{n-1} \chi_k(a_k)\right)
\le d_\tau\left(\sum_{k=1}^{n-1} \chi_k(a_k)\right).$ 
We have 
\beq
\tau\left(\sum_{k=1}^{n-1} \chi_k(a_k)\right)
&=&
\sum_{k=1}^{n-1} \tau\left(\chi_k(a_k)\right)
=
\sum_{k=1}^{n-1} \tau\left(\chi_k(a_1)\right)
= 
\tau\left(\sum_{k=1}^{n-1}  \chi_k(a_1)\right)
\\&=&
\tau\left( n\left(a_1-\frac{1}{n}\right)_+\right)
= 
\tau\left( \left(\phi(1_n)-\frac{1}{n}\right)_+\right),
\mbox{ and }
\eneq

\beq
d_\tau\left(\sum_{k=1}^{n-1}
\chi_k(a_k)\right)
&= 
&
\sum_{k=1}^{n-1}d_\tau\left(
\chi_k(a_k)\right)
=
\sum_{k=1}^{n-1}d_\tau\left(
\chi_k(a_1)\right)
\\&\le &
\sum_{k=0}^{n-2}\tau\left(
\chi_k(a_1)\right)
= \tau\left(\sum_{k=0}^{n-2}
\chi_k(a_1)\right)
\\&\le  &
\tau\left(\sum_{k=0}^{n-1}
\chi_k(a_1)\right)
= \tau\left(na_1\right)
\\&=  &
\tau\left(\phi(1_n)\right). 
\eneq 
Moreover, 
$d_\tau(\sum_{k=1}^{n-1} \chi_k(a_k))-\tau(\sum_{k=1}^{n-1} \chi_k(a_k))
\le \tau(\phi(1_n)-(\phi(1_n)-1/n)_+)\le 1/n$
for arbitrary  $\tau\in\QTAW.$ Thus 
$\omega(\sum_{k=1}^{n-1} \chi_k(a_k))\le 1/n.$
%
\end{proof}

\section{Affine functions on traces are evaluations} 



In this section we work with traces rather than 2-quasitraces, since Cuntz--Pedersen's result \cite{CP79} holds for traces. 
The following proposition  
removes the unital condition in 
\cite[Corollary 3.10]{BPT}, 
which is an analog of 
Kadison's function representation theorem 
(see \cite[Theorem 3.10.3]{Pedbk}, see also 
\cite[Lemma 2.1]{APRT-JFA}). 

\begin{prop}\label{elmy18-01}
{\rm (cf. \cite[3.10]{BPT})}
Let $A$ be a 
\CA\ with 
$\rT(A)\neq \emptyset.$ 
Let $\rT_{[0,1]}(A):=\{s\tau: s\in[0,1],\tau\in\rT(A)\}.$
Let $\Aff(\rT_{[0,1]}(A))$ be the Banach space of $\R$-valued, continuous, affine functions on $\rT_{[0,1]}(A),$ equipped with the supremum norm.

Then for all $f\in \Aff(\rT_{[0,1]}(A)),$
there is $e\in A_{sa}$ such that $f(\tau)=\tau(e)$ for all $\tau\in\rT(A).$
%
If in addition, $A$ is algebraically simple and 
$f(\tau)>0$ for all $\tau\in \rT(A),$ 
then such $e$ can be chosen to be positive. 
\end{prop}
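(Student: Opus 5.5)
We plan to prove Proposition \ref{elmy18-01} by running the Cuntz--Pedersen duality argument from \cite[3.10]{BPT} on the real Banach space $A_{sa}$, and then fixing positivity separately.

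\textbf{Setup.} Let $A_0$ be the closed real subspace of $A_{sa}$ spanned by the commutators $\{x^*x-xx^*:x\in A\}$. By Cuntz--Pedersen \cite{CP79}, the bounded self-adjoint tracial functionals on $A$ are exactly the bounded real functionals on $A_{sa}$ that vanish on $A_0$. Moreover, every bounded positive tracial functional on $A$ is a nonnegative multiple of an element of $\rT(A)$. This uses that an element of $\rT(A)$ is a trace on $\Ped(A\otimes\K)$ with norm one restricted to $A$, so it restricts to a tracial state on $A$ and is determined by that restriction. Consider the map
\[
\Theta: A_{sa}/A_0\to \Aff(\rT_{[0,1]}(A)),\qquad \Theta(e+A_0)(s\tau):=s\tau(e).
\]
It is linear, contractive, and order preserving.

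\textbf{Density of the image.} The first step is to show that $\Theta$ has dense range. By Hahn--Banach (or Kadison's representation theorem), it suffices to show that every bounded functional on $\Aff(\rT_{[0,1]}(A))$ that annihilates the range is zero. Such a functional is given by a signed measure on the compact set $\rT_{[0,1]}(A)$. Its barycenters give two positive tracial functionals $\mu_+,\mu_-$ on $A$ with $\mu_+(e)=\mu_-(e)$ for all $e$. Because an affine continuous function is determined by its values on such barycenters, the functional vanishes. The point that needs care is compactness of $\rT_{[0,1]}(A)$ without a unit. For this we use the weak$^*$ topology on functionals of norm at most one: $\rT_{[0,1]}(A)$ is then the set of tracial positive functionals of norm $\le 1$, which is weak$^*$ compact.

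\textbf{Surjectivity: the main obstacle.} Density alone is not enough; we need exact representation. We plan an isometry-type estimate: for every $f$ in the range there is a lift $e$ with $\|e\|\le \|f\|+\delta$. This follows from Cuntz--Pedersen's distance formula $\mathrm{dist}(e,A_0)=\sup\{|\tau(e)|\}$, taken over tracial functionals of norm $\le1$, combined with the Jordan decomposition of tracial functionals. Given this estimate, write $f=\sum_k f_k$ with $f_k$ in the range and $\|f_k\|<2^{-k}\|f\|$ for $k\ge2$. Lifting each $f_k$ to $e_k$ with $\|e_k\|\le 2^{1-k}\|f\|$, the series $e=\sum_k e_k$ converges and satisfies $\tau(e)=f(\tau)$. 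We expect the non-unital version of the distance formula to be the main obstacle. Concretely, one must check that states on $A_{sa}/A_0$ extend to the tracial functionals described above, which Cuntz--Pedersen \cite{CP79} provides.

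\textbf{Positivity.} Now assume $A$ is algebraically simple and $f>0$ on $\rT(A)$. Since $\rT(A)$ is compact (it is closed in $\ol{\QT(A)}^w$, which is compact by Proposition \ref{elmy11-11}), there is $\eta>0$ with $f\ge 2\eta$ on $\rT(A)$. Algebraic simplicity gives $A=\Ped(A)$. Fix a nonzero $a\in A_+$; then $\tau(a)\ge c>0$ uniformly on $\rT(A)$. Take a self-adjoint lift $e$ of $f$, and lift $f-\eta\hat a/\|\hat a\|$ similarly. We then want to replace $e$ by a positive element $e'$ with $e'-e\in A_0$. Following \cite[3.10]{BPT}, the approach is:
\begin{itemize}
\item apply the distance formula to the quotient order: an element $x$ with $\tau(x)>0$ for all $\tau$ lies in $A_++A_0$;
\item prove this by Hahn--Banach separation: otherwise some functional is nonnegative on $A_+$, vanishes on $A_0$, and is nonpositive at $x$;
\item such a functional is a positive tracial functional, i.e.\ a nonnegative multiple of some $\tau\in\rT(A)$, which contradicts $\tau(x)>0$.
\end{itemize}
The separation step needs $A_++A_0$ to be closed, or at least an $\eta$-margin argument with approximate lifts. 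We handle this by repeating the series trick from the surjectivity step, keeping each correction positive. This again uses the $\eta$-margin and the fact that $\hat a$ is bounded below, which is where algebraic simplicity enters.
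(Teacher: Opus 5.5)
Your self-adjoint part is correct but takes a different route from the paper. The paper reduces to the unital case. It writes every element of $\rT(\wtd A)\setminus\{\pi\}$ uniquely as $s\tau_0+(1-s)\pi$, extends $f$ by $\wtd f(\pi)=0$, checks continuity, applies \cite[3.10]{BPT} to $\wtd A$, and uses $\pi(e)=0$ to get $e\in A_{sa}$. You instead run the Cuntz--Pedersen duality directly on $A_{sa}/A_0$. You get density of the range from barycentres, and closed range from $\mathrm{dist}(e,A_0)=\sup_{\tau}|\tau(e)|$ together with the fact that the Jordan parts of a tracial functional are tracial. Your version is self-contained and exhibits $A_{sa}/A_0\cong\Aff(\rT_{[0,1]}(A))$ isometrically; the paper's is shorter. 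Both implicitly use $f(s\tau)=sf(\tau)$. Both also rest on $\rT_{[0,1]}(A)$ being the weak$^*$ compact set of tracial positive functionals of norm at most one, equivalently $\rT(\wtd A)$.

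The gap is in the positivity step. Separation only gives $x\in\ol{A_++A_0}$. In the unital case one upgrades this by writing $x-\eta 1=p+z+r$ with $\|r\|<\eta$ and noting $\eta 1+r\ge 0$, which uses that $1$ is a norm-interior point of $A_+$. For non-unital $A$ the cone $A_+$ has empty interior, and replacing $1$ by $a$ does not help. The element $\eta a/\|\wh a\|+r$ is just another self-adjoint element with strictly positive traces, so you are back at the statement you are proving. Iterating gives $x=\lim_N\bigl(\sum_{k\le N}p_k+z_N\bigr)$ with $p_k\in A_+$ and $z_N\in A_0$. However, you control only $\sup_\tau\tau(p_k)$, not $\|p_k\|$, so nothing forces $\sum_k p_k$ to converge in norm.

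The series trick worked for self-adjoint lifts only because of the distance formula, and there is no analogous norm-controlled positive lifting. Obtaining one is precisely the Cuntz--Pedersen comparison theory for simple algebras; for instance, you would need that a fixed nonzero $a$ dominates, modulo $A_0$ and uniformly, all positive elements of small norm. The paper does not reprove this. It cites \cite[Corollary 6.4]{CP79}, which provides $\bar e\in A_+$ with $\tau(\bar e)=\tau(e)$ for all $\tau\in\rT(A)$ when $A$ is algebraically simple and $\tau(e)>0$ on $\rT(A)$. You need that result or an actual proof of it.

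A smaller point: algebraic simplicity does not obviously make $\rT(A)$ compact, and the paper assumes compactness separately in Section 10. So your justification of $\eta>0$ is unfounded as stated. What you do have is that $0$ is not in the weak$^*$ closure of $\rT(A)$ (cf.\ Proposition \ref{elmy11-11}). Combined with $f(s\tau)=sf(\tau)$ and faithfulness of traces on a simple algebra, this still gives $\inf_{\rT(A)}f>0$ and $\inf_{\rT(A)}\tau(a)>0$.
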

\begin{proof}
If $A$ is  unital, then the  lemma holds by 
\cite[Corollary 3.10]{BPT}. 
In the following we may assume $A$ is non-unital. 
Let $\wtd A$ be the minimal unitization of $A.$ 
For each $\tau\in \rT(A),$ let $\wtd \tau$ be the canonical extension of $\tau$ on $\wtd A$ 
given by 
$\wtd\tau(x+s)=\tau(x)+s$ for all $x\in A$ and all $s\in\C.$   
In the following we may regard 
$\rT(A)$ as a subspace of $\rT(\wtd A)$ 
by identifying $\tau\in\rT(A)$ with $\wtd \tau.$ 

Let $\pi:\wtd A\to \wtd A/A\cong \C$ be the quotient map. 
Then 
$\pi\in\rT(\wtd A)$ and $\pi|_A=0.$ 
For all $\tau\in \rT(\wtd A),$ 
if $\tau|_A=0,$ then $\tau=\pi;$
if $\tau_1:=\tau|_A\neq 0,$ then $\tau=\|\tau_1\|\cdot(\tau_1/\|\tau_1\|)+(1-\|\tau_1\|)\pi.$
Hence for all $\tau\in \rT(\wtd A),$ 
if $\tau\neq \pi,$ 
there are  $\tau_0\in\rT(A)$
and $s\in(0,1]$ such that $\tau=s\tau_0+(1-s)\pi.$
Furthermore, it is routine to check that  such $\tau_0$ and $s$ are unique.

Let 
$f\in\Aff(\rT_{[0,1]}(A)).$ 
Define $\wtd f: \rT(\wtd A)\to\R$ by 
$\wtd f(\pi):=0$ and 
$\wtd f(s\tau+(1-s)\pi):=f(s\tau)=sf(\tau)$ for all $s\in(0,1]$ and all $\tau\in\rT(A).$
Then $\wtd f$ is an affine function on $\rT(\wtd A).$

{\bf Claim:} $\wtd f$ is continuous on $\rT(\wtd A).$

{\bf Proof of the Claim:} 
Let $\{s_i\tau_i+(1-s_i)\pi\}_{i\in I}\subset \rT(\wtd A)$ be a net 
that converges to 
$\bar \tau\in\rT(\wtd A)$ pointwisely. 
If $\bar\tau=\pi,$ 
then $\{s_i\tau_i\}$ converges to $0$ on $A$ pointwisely.
Since $f\in\Aff(\rT_{[0,1]}(A))$ is continuous, 
we have $\wtd f(\pi)=0
=
\lim_i f(s_i\tau_i) 
=\lim_i\wtd f(s_i\tau_i+(1-s_i)\pi).$ Hence $\wtd f$ is continuous at $\pi.$
%
If $\bar \tau\neq \pi,$ then there are 
$s\in(0,1]$ and $\tau\in\rT(A)$ such that $\bar \tau=s\tau+(1-s)\pi.$  
Since $\pi(a)=0$ for all $a\in A,$
$s_i\tau_i$ converges to $s\tau$ on $A$ pointwisely.
Then 
$\wtd f(\bar \tau)=\wtd f(s\tau+(1-s)\pi)
=f(s\tau)
=
\lim_i f(s_i\tau_i) 
=\lim_i\wtd f(s_i\tau_i+(1-s_i)\pi).$ 
Hence $\wtd f$ is continuous at $\bar\tau.$ 
Thus the Claim holds.

Then $\wtd f\in \Aff(\rT(\wtd A)).$
Moreover, $\wtd f$ is bounded on $\rT(\wtd A).$
By \cite[Corollary 3.10]{BPT}, there is $e\in \wtd A_{sa}$ such that 
$\wtd f(\tau)=\tau(e)$ for all $\tau\in\rT(\wtd A).$
Then $0=\wtd f(\pi)=\pi(e)$ implies that $e\in A_{sa}.$ Hence for all $\tau\in \rT(A),$  $f(\tau)=\wtd f(\tau)=\tau(e).$

If in addition, $A$ is algebraically simple and 
$f(\tau)=\tau(e)>0$ for all $\tau\in \rT(A),$ 
then by \cite[Corollary 6.4]{CP79}, 
there is $\bar e\in A_+$ such that 
$f(\tau)=\tau(e)=\tau(\bar e)$ for all $\tau\in\rT(A).$ 
\end{proof}

\begin{prop}\label{elmy18-02}
Let $A$ be an algebraically simple  
\CA\ with {{$\rT(A)\neq \emptyset$}} 
and let $a\in A_+.$
Let $g\in \Aff(\rT_{[0,1]}(A))$ 
with {{$0<g(\tau)<d_\tau(a)$}} for all {{$\tau\in\rT(A).$}}
Then 
there are $b\in\Her(a)_+^1$ and $n\in\N$ such that $b\le f_{1/n}(a)$  and 
$g(\tau)=\tau(b)$ for all {{$\tau\in \rT(A).$}} 
\end{prop}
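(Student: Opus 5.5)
We will realize $g$ as $\tau(e)$ with $e$ positive using Proposition \ref{elmy18-01}, and then move $e$ into $\Her(a)$ below $f_{1/n}(a)$ by a Cuntz--Pedersen type argument.

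The first step is to choose $n$. The functions $\tau\mapsto \tau(f_{1/m}(a))$ are continuous and affine on $\rT(A)$ (here we use $a\in A_+$ and algebraic simplicity, so $f_{1/m}(a)\in\Ped(A)$). They increase pointwise to $d_\tau(a)>g(\tau)$. We need $\rT(A)$ to be compact; it is, as a closed subset of $\ol{\QT(A)}^w$, by Proposition \ref{elmy11-11}. The Dini-type argument used in Proposition \ref{20251221-1} then gives $m$ with
$$g(\tau)<\tau(f_{1/m}(a))\quad\text{for all }\tau\in\rT(A).$$
Compactness also gives $\dt>0$ with $g(\tau)+\dt<\tau(f_{1/m}(a))$ and $g(\tau)>\dt$ on $\rT(A)$. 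We then put $n:=m$, or possibly $n:=2m$ to leave room for a slightly smaller cutoff.

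The second step is to represent $g$ inside the corner. Set $B:=\ol{f_{1/(2m)}(a)Af_{1/(2m)}(a)}$, or simply $B:=\Her(f_{1/n}(a))$ with the appropriate cutoff. Then $B$ is algebraically simple and full, and by Brown's theorem restriction gives an affine homeomorphism between $\rT(A)$ (suitably normalized) and the traces on $B$; the normalization is a continuous positive rescaling. Transport $g$ to a strictly positive continuous affine function on $\rT_{[0,1]}(B)$. Proposition \ref{elmy18-01} then yields $e_0\in B_+$ with $\tau(e_0)=g(\tau)$ for all $\tau\in\rT(A)$. Note that $e_0$ is not yet bounded by $f_{1/n}(a)$ and need not be a contraction.

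The third step, which is the main obstacle, is to pass to $b\le f_{1/n}(a)$ with the same traces. Put $p:=f_{1/n}(a)$; it acts as a unit on $\Her(f_{1/(2n)}(a))$ in the sense that $pc=c$ there. We must find $b$ in this corner with $0\le b\le p$ and $\tau(b)=g(\tau)$, knowing that $g<\tau(p)-\dt$. This is a unital-type problem in the hereditary algebra $D:=\Her(f_{1/(2n)}(a))$.

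For the third step, use Cuntz--Pedersen \cite{CP79}: the functions $\tau\mapsto \tau(x)$, for $x$ ranging over self-adjoint elements with $0\le x\le p$, form a convex set. By a Hahn--Banach / Kadison-type argument, as in \cite[3.10]{BPT}, their closure in sup-norm contains every continuous affine $g$ with $0<g<\hat p$. Exact equality then follows from the standard Cuntz--Pedersen telescoping. Write $g=\sum_k g_k$ with the $g_k$ strictly positive and summable in norm. At each stage choose $x_k\ge0$ with $\tau(x_k)=g_k$ exactly, via Proposition \ref{elmy18-01}, and with $x_k$ Cuntz--Pedersen equivalent to an element fitting under the remaining room $p-\sum_{j<k}x_j$; this uses $x\sim_{CP}$-equivalence, $x=\sum z_iz_i^*$, $\sum z_i^*z_i$. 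Taking the sum gives $b\le p$.

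If this is too delicate, the alternative is to apply \cite[Corollary 6.4]{CP79} inside the unitization of $D$, with $p$ playing the role of the unit, to the function $g$, which lies strictly between $0$ and the "unit". This yields $b$ with $0\le b\le p$ directly, mirroring the unital result \cite[3.10]{BPT}, whose proof produces elements in $[0,1]$ when $0<g<1$. Finally, $b\le p\le 1$ gives $b\in\Her(a)_+^1$.
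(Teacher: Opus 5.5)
\textbf{There is a genuine gap in Step 3, which you correctly identify as the crux.} The missing ingredient is Cuntz--Pedersen's comparison result \cite[Lemma 7.3]{CP79}. In a simple $C^*$-algebra, suppose $e,y$ are positive elements of the Pedersen ideal and $\tau(e)<\tau(y)$ holds strictly on a compact base of the trace cone. Then $e$ is Cuntz--Pedersen subequivalent to $y$: $e=\sum_i z_iz_i^*$ with $b:=\sum_i z_i^*z_i\le y$, and $\tau(b)=\tau(e)$ for every trace.

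The paper's proof consists of exactly this:
\begin{enumerate}
\item Take $e\in A_+$ with $\tau(e)=g(\tau)$ from Proposition \ref{elmy18-01}. It can lie anywhere in $A$; no corner is needed.
\item Extend the identity to $\TT(A)=\R_+\cdot\rT(A)$.
\item Run Dini on the compact base $\rT_e=\{\tau\in\TT(A):\tau(e)=1\}$ to get $n_0$ with $\tau(e)<\tau(f_{1/n_0}(a))$.
\item Apply \cite[Lemma 7.3]{CP79} once to obtain $b\le f_{1/n_0}(a)$ with the same traces as $e$.
\end{enumerate}

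\textbf{Your telescoping route does not replace this.} At every stage it needs $x_k$ to be Cuntz--Pedersen equivalent to an element lying below the remaining room. That is precisely the comparison lemma, and you leave it unjustified. Once you have it, one application to $e$ and $p$ finishes the proof, so the decomposition $g=\sum_k g_k$ and the Hahn--Banach density become superfluous.

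As written, the telescoping also lacks norm control. The elements placed under the room have small traces $g_k$, but that does not make them small in norm. So the increasing partial sums bounded by $p$ need not converge in norm. A density-plus-iteration proof avoiding \cite[Lemma 7.3]{CP79} might be possible, but it would need:
\begin{itemize}
\item the density itself proved, e.g.\ by separation with differences of traces;
\item an iteration with $0\le x_k\le \eta_k\,(p-\sum_{j<k}x_j)$ and $\sum_k\eta_k<\infty$;
\item one-sided control of the remainders.
\end{itemize}
None of this is in the proposal.

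\textbf{Your alternative route does not make sense as stated.} $p=f_{1/n}(a)$ is not a projection, so there is no unitization of $D$ with $p$ as its unit. Moreover, \cite[Corollary 6.4]{CP79} only yields a positive representative, with no upper bound by $p$.

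\textbf{Smaller errors.}
\begin{itemize}
\item The direction is reversed: $f_{1/n}(a)$ acts as a unit on $\Her(f_{2/n}(a))$, not on $\Her(f_{1/(2n)}(a))$.
\item $\rT(A)$ need not be compact for non-unital algebraically simple $A$. Tracial states can converge in $\ol{\QT(A)}^w$ to traces of norm $<1$.
\item The rescaling $\tau\mapsto\|\tau|_{\Her(f_{1/(2m)}(a))}\|=d_\tau(f_{1/(2m)}(a))$ is only lower semicontinuous, not continuous.
\end{itemize}
The last two can be repaired by homogeneity, working on a compact base such as $\rT_e$. Your Step 2 is unnecessary in any case.
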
 

\begin{proof} 
By Proposition \ref{elmy18-01}, 
there is $e\in A_+$ such that 
$g(\tau)=\tau(e)$ for all $\tau\in \rT(A).$
Let $\TT(A)$ be 
the set of densely defined traces (see Definition \ref{Dqtr}),
then $\TT(A)=\R_+\cdot \rT(A)$ (\cite[Lemma 4.6]{CP79}). 
Then we have
\beq\label{elmy19-03}
\text{$g(\tau)=\tau(e)$ for all $\tau\in \TT(A).$}
\eneq
Let $\rT_e:=\{\tau\in \TT(A):\tau(e)=1\}$ 
be a compact set. 
Since $1=\tau(e)=g(\tau)<d_\tau(a)=\lim_n\what{f_{1/n}(a)}(\tau)$ 
for all $\tau\in \rT_e,$ 
and $\what{f_{1/n}(a)}$ is continuous on $\rT_e$ for all $n\in\N,$  
by a Dini-type theorem (e.g., \cite[5.6]{Fu2025}), 
there exists $n_0\in\N$ 
such that 
\beq 
1 <\tau(f_{1/n_0}(a)) \mbox{ for all }\tau\in \rT_e. 
\eneq
By Cuntz--Pedersen's result (see \cite[Lemma 7.3]{CP79}),
$e$ is Cuntz--Pedersen subequivalent to $f_{1/n_0}(a),$
which implies that 
there is $b\in\Her(a)_+$ satisfying  
\beq\label{elap30-1}
\mbox{$b\le f_{1/n_0}(a)$ and  
$\tau(e)=\tau(b)$ for all 
$\tau\in \rT(A).$}
\eneq
Then the proposition follows from 
\eqref{elmy19-03} and \eqref{elap30-1}. 
\end{proof}

\begin{rem}
(1) 
For the most part of this paper, 
using 2-quasitraces makes no essential difference to using traces
because they share similar metric properties. 
The above proposition is among the few instances where traces can carry the proof through due to Cuntz--Pedersen's result, whereas 2-quasitraces fail to do so. 
It seems hard --- if not impossible --- to generalize Cuntz-Pedersen's result to 2-quasitraces.

(2) Above proposition  shows that 
under the exact setting, 
affine functions 
automatically 
have 
a form of 
hereditary surjectivity  
--- but  not the one in 
Definition \ref{HeDe}. 
\end{rem}

\section{Tracial diagonal divisibility}

We introduce here two tracial divisibility properties that are closely related to hereditary denseness. 

\begin{df} 
\label{elmy20-02}
Let $A$ be a \CA\ with $\QT(A)\neq \emptyset.$
Let $c\in\Ped(A\otimes\K)_+^1.$ 
We say $c$ is tracially diagonally divisible 
if for every $n\in\N$ and every $\ep>0,$
there is a c.p.c.~order zero map $\phi:M_n\to\Her(c)$ 
such that 
$\|\phi(1_n)- c\|_2<\ep.$ 

In general, a positive element $e\in\Ped(A\otimes\K)_+\nzero$ 
is said to be tracially diagonally divisible if $e/\|e\|$ is
tracially diagonally divisible. 

We say $A$ is tracially diagonally divisible 
if every element in $\Ped(A\otimes\K)_+^1$ is 
tracially diagonally divisible. 

\end{df}

\begin{rem}
The name  ``tracially diagonally divisible'' justifies itself. 
The following definition of tracial diagonal divisibility that makes use of the Cuntz comparison also makes sense (but we will not explore it in this paper): 
an element $c\in\Ped(A\otimes\K)_+^1$ is said to be 
Cu-tracially diagonally divisible if for all $d\in A_+\nzero$ and all $n\in\N,$
there is a c.p.c.~order zero map $\phi: M_n\to \Her(c)$ such that 
$|c-\phi(1_n)|\lesssim d.$ 

\end{rem}

\begin{df}\label{elmy20-01}
Let $A$ be a \CA\ with $\QT(A)\neq \emptyset.$
Let $e\in\Ped(A\otimes\K)_+^1.$  
We say $e$ is weakly tracially diagonally divisible 
if for 
every $n\in\N,$ and every $\ep>0,$
there is a c.p.c.~order zero map $\phi:M_n\to\Her(e)$ 
such that 
$\sup\{|\tau(\phi(1_n))- \tau(e)|:\tau\in\QTAW\}<\ep.$

In general, a positive element $e\in\Ped(A\otimes\K)_+$ 
is said to be weakly tracially diagonally divisible, if $e/\|e\|$ is
weakly tracially diagonally divisible. 

We say $A$ is weakly tracially diagonally divisible, 
if every element in $\Ped(A\otimes\K)_+^1$ is 
weakly tracially diagonally divisible. 
\end{df}

\begin{rem}
Trivially, 
tracial  diagonal divisibility implies weak tracial diagonal divisibility. 
Later, we will show that both concepts are actually equivalent 
for simple separable exact \CAs\ (Theorem \ref{elmy11-10}). 
\end{rem}

\begin{df}\label{elmy13-05}
(\cite[Definition 8.1]{FLosc})
Let $A$ be a \CA\ with $\QT(A)\neq\emptyset.$
We say $A$ has Property (TM) if for all $a\in\Ped(A\otimes\K)_+^1,$
all $n\in\N,$ all $\ep>0,$ there is a c.p.c.~order zero map $\phi:M_n\to\Her(a)$ such that $\|\phi(1_n)a-a\|_2<\ep.$ 
\end{df}
\begin{rem}
``TM'' stands  for ``Tracially Matricial''.  
The existence of such $\phi$ means that $\Her(a)$ has an evenly divided approximate identity $\phi(1_n)$ (with respect to the 2-norm), 
and that tracially locally $\Her(a)$ looks like $M_n(\Her(e_{1,1}))$ --- hence the term TM. 
\end{rem}

\begin{prop}\label{elmy11-07}
Let $A$ be a 
\CA\ with $\TQT(A)\neq\{0\}.$
If $A$ is tracially diagonally divisible, then $A$ has Property (TM). 
\end{prop}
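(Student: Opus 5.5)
Fix $a\in\Ped(A\otimes\K)_+^1$, $n\in\N$ and $\ep>0$. We must produce a c.p.c.~order zero map $\phi:M_n\to\Her(a)$ with $\|\phi(1_n)a-a\|_2<\ep$. Tracial diagonal divisibility cannot be applied to $a$ directly, since $\|\phi(1_n)-a\|_2$ small does not make $\phi(1_n)$ act like a unit on $a$. Instead we apply it to a functional calculus element of $a$ that is close to a unit for $a$ in 2-norm. Let $\dt>0$ be small and set $c:=f_{\dt}(a)\in\Ped(A\otimes\K)_+^1$; note $c\in\Her(a)$, so $\Her(c)\subset\Her(a)$. By tracial diagonal divisibility (Definition \ref{elmy20-02}) there is a c.p.c.~order zero map $\phi:M_n\to\Her(c)\subset\Her(a)$ with $\|\phi(1_n)-c\|_2<\eta$, where $\eta$ is to be chosen.

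Next, write
$$\phi(1_n)a-a=(\phi(1_n)-c)a+(ca-a).$$
For the first term we use the estimate $\|xy\|_{2,\tau}\le\|x\|_{2,\tau}\|y\|$, valid for 2-quasitraces on the Pedersen ideal since $\tau(y^*x^*xy)\le\|y\|^2\tau(x^*x)$ up to the standard identity $\tau(z^*z)=\tau(zz^*)$ (here we work with $\tau((\cdot-\ep)_+)$ if necessary, all elements lying in $\Her(a)\subset\Ped$). This gives $\|(\phi(1_n)-c)a\|_2\le\eta$. For the second term, $ca-a=(f_\dt(a)-1)a$ is a function $g(a)$ with $|g(t)|\le 2\dt$ for all $t\in[0,1]$, and $g(a)\in\Her(a)$. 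Hence $\|ca-a\|_{2,\tau}^2=\tau(g(a)^2)\le 4\dt^2\,\tau(f_{\dt'}(\cdots))$; more simply $g(a)^2\le 4\dt^2 e'$ for a suitable $e'\in\Her(a)_+^1$, for instance $e'=f_{\dt/2}$-type support. A cleaner bound is $g(a)^2\le 2\dt\, a$, so that $\|ca-a\|_2^2\le 2\dt\sup_{\tau\in\QTAW}\tau(a)=2\dt M$, where $M<\infty$ by Lemma \ref{elmy07-01} (or \cite[2.10]{FLosc}).

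The remaining issue is the triangle inequality for $\|\cdot\|_2$. For quasitraces, $\|\cdot\|_{2,\tau}$ satisfies a quasi-triangle inequality with a universal constant (from \cite{Haa}, as used in Lemma \ref{yi15-1gg}): $\|x+y\|_{2,\tau}\le C(\|x\|_{2,\tau}+\|y\|_{2,\tau})$. This suffices, since it yields $\|\phi(1_n)a-a\|_2\le C(\eta+(2\dt M)^{1/2})<\ep$ once $\dt$ and $\eta$ are chosen small. I expect the main care to lie in justifying these 2-norm manipulations (the submultiplicativity and the quasi-triangle inequality) for 2-quasitraces rather than traces. The cleanest way to handle it is to work on $\Her(e)$ with $e=a$, where all elements lie in the Pedersen ideal and the 2-quasitrace extends to a lower semicontinuous quasitrace satisfying Haagerup's inequalities.
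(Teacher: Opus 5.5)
Your argument is correct, but it applies the hypothesis to a different element than the paper does.

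The paper applies tracial diagonal divisibility to $e$ itself, obtaining $\phi$ with $\|\phi(1_n)-e\|_2<\dt$, and then fixes the missing ``unit'' property afterwards. It replaces $\phi$ by $\psi=f_\theta(\phi)$, using the Winter--Zacharias functional calculus for order zero maps, so that $\psi(1_n)=f_\theta(\phi(1_n))$. It then invokes the uniform $\|\cdot\|_2$-continuity of $x\mapsto f_\theta(x)$ on $\Her(e)_+^1$, stated without proof, to get $\psi(1_n)$ close in 2-norm to $f_\theta(e)$, which acts as an approximate unit for $e$.

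You instead apply the hypothesis directly to $c=f_\dt(a)\in\Ped(A\otimes\K)_+^1$. This is legitimate because tracial diagonal divisibility of $A$ is assumed for every element of $\Ped(A\otimes\K)_+^1$. After that you only need three facts:
\begin{itemize}
\item the estimate $\|xy\|_2\le\|x\|_2\|y\|$, which the paper also uses;
\item the pointwise bound $g(t)^2\le 2\dt t$, which plays the role of the paper's choice of $\theta$;
\item a quasi-triangle inequality, where Haagerup's $2/3$-inequality lets you take $C=\sqrt2$, since $(s^{2/3}+t^{2/3})^{3/2}\le\sqrt2\,(s+t)$.
\end{itemize}

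Your route is therefore more elementary: it needs neither order-zero functional calculus nor a continuity estimate for functional calculus in 2-norm. The paper's route buys locality: it shows that any single tracially diagonally divisible element $e$ satisfies the (TM) condition for itself. Your argument instead needs divisibility of the auxiliary elements $f_\dt(a)$, not of $a$.

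One small point on citations. Both arguments need $\sup_{\tau\in\QTAW}\tau(a)<\infty$; the paper uses this implicitly when choosing $\theta$. Lemma \ref{elmy07-01}, which you cite for it, carries a $\sigma$-unitality assumption that the proposition does not make. The bound itself follows directly from $a\in\Ped(A\otimes\K)$ and $\|\tau|_A\|\le1$ on $\QTAW$.
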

\begin{proof}
Let $e\in\Ped(A\otimes\K)_+,$  $n\in\N$ and  $\ep>0.$ 
Without loss of generality, we may assume $\|e\|\le 1.$ 
Choose $\theta>0$ such that $\|f_\theta(e)e-e\|_2<\ep/8.$
Choose $\dt>0$ such that for all $x\in \Her(e)_+^1,$ if $\|x-e\|_2<\dt,$ then $\|f_\theta(x)-f_\theta(e)\|_2<\ep/8.$ 
Since $e$ is tracially  diagonally divisible, 
there is a c.p.c.~order zero map $\phi: M_n\to\Her(e)$ such that $\|\phi(1_n)-e\|_2<\dt.$
Then by the choice of $\dt,$
$\|f_\theta(\phi(1_n))-f_\theta(e)\|_2<\ep/8.$
Let $\psi:=f_\theta(\phi):M_n\to C^*(\phi(M_n))\subset \Her(e)$ 
be the functional calculus of $\phi$ with respect to $f_\theta$ 
(see \cite[Corollary 4.2]{WZ09OZ}). 
Then $\psi(1_n)=f_\theta(\phi(1_n)),$ and
\beq
\|\psi(1_n)e-e\|_2^{2/3}
&=&
\|f_\theta(\phi(1_n))e-e\|_2^{2/3}
\\ &\le &  
\|f_\theta(\phi(1_n))e-f_\theta(e)e\|_2^{2/3}+\|f_\theta(e)e-e\|_2^{2/3}
\\ &\le &  
\|f_\theta(\phi(1_n))-f_\theta(e)\|_2^{2/3}+(\ep/8)^{2/3}
<\ep^{2/3}.
\eneq 
Then the proposition follows. 
\end{proof}

\begin{thm}\label{elmy11-05}
Let $A$ be a simple exact \CA\ with $\TQT(A)\neq\{0\}.$
If $A$ is weakly  tracially diagonally divisible, 
then the canonical map $\Gamma$ is hereditary dense.  
\end{thm}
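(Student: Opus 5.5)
The plan is to verify condition (2) of Theorem \ref{20251216-1} with $X=X_1$. Then Theorem \ref{20251216-1} ((2)$\Rightarrow$(1)) gives that $\Gamma$ is $\Sigma(X_1)$-hereditary dense, that is, hereditary dense.

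\emph{Reduction to traces.} Since $A$ is exact, Haagerup's theorem gives $\TQT(A)=\TT(A)$, so every element of $\ol{\QT(A)}^w$ is a (bounded, on the relevant hereditary subalgebras) trace. This lets us use the results of Section 6, which are only available for traces. Fix $e\in(A\otimes\K)_+$, $\ep>0$ and $h=\wh x\in X_1$ (so $x\in\Ped(A\otimes\K)_+$) with $h(\tau)<d_\tau(e)$ for all $\tau\neq 0$. If $h=0$, take $y=0$. Otherwise $h$ is strictly positive on $S:=\ol{\QT(A)}^w$.

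\emph{Step 1: realize a slightly enlarged $h$ as a trace of an element of $\Her(e)$.} Since $h$ is continuous, $\tau\mapsto d_\tau(e)$ is lower semicontinuous, and $S$ is compact, we can choose $\dt>0$ with $g:=(1+\dt)h<\wh{[e]}$ on $S$. A Dini-type argument (\cite[5.6]{Fu2025}) then gives $m$ with $g(\tau)<\tau(f_{1/m}(e))\le d_\tau(f_{1/(2m)}(e))$ on $S$. Put $e':=f_{1/(2m)}(e)$. Then $\Her(e')$ lies in the Pedersen ideal, is algebraically simple, and its tracial state space is identified with a rescaling of $S$. Apply Proposition \ref{elmy18-02} in $\Her(e')$, transporting $g$ to an affine function on $\rT_{[0,1]}(\Her(e'))$. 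This yields $b\in\Her(e)_+^1$ with $\tau(b)=g(\tau)$ for all $\tau\in S$. Making this transfer of $g$ between $\TQT(A)$ and $\rT_{[0,1]}(\Her(e'))$ precise, in particular checking continuity and positivity, is the bookkeeping I expect to be the main technical obstacle.

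\emph{Step 2: divide $b$.} Let $M:=\sup_{\tau\in S}\|\tau|_{\Her(e')}\|<\infty$ (Lemma \ref{elmy07-01}), and pick $N\ge 2$ with $M/N<\ep/3$. By weak tracial diagonal divisibility of $b$, there is a c.p.c.~order zero map $\phi:M_N\to\Her(b)$ with $|\tau(\phi(1_N))-\tau(b)|<\ep/3$ for all $\tau\in S$. Set $y:=\sum_{k=1}^{N-1}\chi_k(\phi(e_{k,k}))\in\Her(b)\subset\Her(e)$, so that $y\lesssim e$. Proposition \ref{elmy06-01} gives
\[
\tau\bigl((\phi(1_N)-1/N)_+\bigr)\le d_\tau(y)\le\tau(\phi(1_N)),
\]
and the gap between the two outer terms is at most $\tau(\phi(1_N))-\tau((\phi(1_N)-1/N)_+)\le M/N<\ep/3$. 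Hence $|d_\tau(y)-g(\tau)|<2\ep/3$ on $S$.

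\emph{Conclusion.} Finally, $|g-h|\le\dt\sup_S h$, which is below $\ep/3$ once $\dt$ is chosen small enough. Therefore $|d_\tau(y)-h(\tau)|<\ep$ on $S$, which is condition (2) of Theorem \ref{20251216-1}, and the theorem follows.
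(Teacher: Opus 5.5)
Your proposal is correct and follows essentially the paper's proof. You realize the affine function as $\tau\mapsto\tau(b)$ via Proposition \ref{elmy18-02}, using exactness so that all quasitraces are traces. You then divide $b$ using weak tracial diagonal divisibility, take $y=\sum_{k=1}^{N-1}\chi_k(\phi(e_{k,k}))$ from Proposition \ref{elmy06-01}, and conclude by Theorem \ref{20251216-1} ((2)$\Rightarrow$(1)).

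A few differences are worth noting. Cutting down to $e'=f_{1/(2m)}(e)\in\Ped(A\otimes\K)$ covers arbitrary $e\in(A\otimes\K)_+$, whereas the paper's proof only treats $e\in\Ped(A\otimes\K)_+$. Your gap bound $M/N$ is also more careful than the paper's $1/n$, which implicitly assumes $\|\tau|_{\Her(e)}\|\le 1$. The enlargement $g=(1+\dt)h$, by contrast, is harmless but unnecessary: Dini already gives $h(\tau)<\tau(f_{1/m}(e))$ directly.
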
 
\begin{proof}
Let $e\in\Ped(A\otimes\K)_+$ and let $\ep>0.$
Let $f\in \Aff_+(\TQT(A))$  with 
$f(\tau)<d_\tau(e)$ for all $\tau\in\TQT(A)\nzero.$ 
By  Proposition \ref{elmy18-02}, 
there is $a\in\Her(e)_+^1$
such that 
$f(\tau)=\tau(a)$ for all 
$\tau\in \rT(\Her(e)).$
Since $A$ is exact, all 2-quasitraces on 
$\Her(e)$ are traces (\cite{Haa}). Hence $\TQT(A)=\R_+\cdot \rT(\Her(e)).$
Then 
\beq\label{elmy10-06}
f(\tau)=\tau(a) 
\quad \text{for all } 
\tau\in \TQT(A).
\eneq

Let $n\in\N$ such that 
$1/n< 
\ep/4.$
Since $A$ is weakly tracially diagonally divisible, 
there is a c.p.c.~order zero map 
$\phi: M_n\to \Her(a)$ such that 
\beq\label{elmy07-07}
|\tau(\phi(1_n))-\tau(a)|<
\ep/4 
\quad \text{for all } 
\tau\in\QTAW.
\eneq

By Proposition \ref{elmy06-01}
and using the notations therein,  
the element 
$$x_0:=\sum_{k=1}^{n-1} \chi_k(\phi(e_{k,k}))\in\Her(\phi(1_n))_+^1\subset\Her(e)_+^1$$
satisfies $\|x_0\|\le 1$ and for all $\tau\in\QTAW,$ 
\beq\label{elmy07-08}
\tau(\phi(1_n))-1/n \le \tau((\phi(1_n)-1/n)_+)\le d_\tau(x_0)\le \tau(\phi(1_n)).
\eneq
Hence $|d_\tau(x_0)- \tau(\phi(1_n))|\le 1/n.$
This, together with \eqref{elmy07-07} and \eqref{elmy10-06}, 
implies that 
\beq\label{elmy10-07}
|d_\tau(x_0)-f(\tau)|<\ep 
\quad \mbox{for all $\tau\in\QTAW.$}
\eneq
Then 
\eqref{elmy10-07}, 
$x_0\in\Her(e)_+,$ 
and Theorem \ref{20251216-1} ($(2)\Rightarrow (1)$)
show that 
$\Gamma$ is hereditary dense. 
\end{proof}

\begin{lem}\label{elmy08-04}
Let $A$ be a $\sigma$-unital algebraically simple \CA\ with $\TQT(A)\neq\emptyset.$ 
Let $B\subset\Ped(A\otimes\K)$ be a $\sigma$-unital 
algebraically simple 
hereditary $C^*$-subalgebra. 
Then 

(1) $\TQT(A)=\TQT(B).$ 

(2)
For all $x\in B,$ let $\|x\|_{2,B}:=\sup\{\tau(x^*x)^{1/2}:\tau\in
\TQT(A),\|\tau|_B\|\le 1\}.$
Then there are $c_0,c_1>0$ such that $c_0\|x\|_2\le \|x\|_{2,B}
\le c_1\|x\|_{2}$ for all $x\in B.$

(3) Let $J_B:=\{\{b_n\}\in l^\infty(B):
\lim_n\|b_n\|_{2,B}=0\}.$  
Then $J_B=\{\{b_n\}\in l^\infty(B):
\lim_n\|b_n\|_{2}=0\}.$  
%
\end{lem}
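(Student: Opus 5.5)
The plan is to reduce all three parts to the comparison of the restriction maps on quasitraces, and then read off norm equivalences from compactness of the relevant normalized trace sets.

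For (1), I would use Brown's stable isomorphism theorem. Since $B$ is a $\sigma$-unital hereditary subalgebra of $A\otimes\K$ and $A$ is simple, $B$ is full. Hence $B\otimes\K\cong A\otimes\K\otimes\K\cong A\otimes\K$. Because $B\subset\Ped(A\otimes\K)$, we get $\Ped(B)=B$ and $\Ped(B\otimes\K)\subset\Ped(A\otimes\K\otimes\K)$. More concretely, restriction $\tau\mapsto\tau|_{B\otimes\K}$ gives a map $\TQT(A)\to\TQT(B)$. It is injective, because a densely defined 2-quasitrace on a simple algebra is determined by its values on a full hereditary subalgebra via the Cuntz semigroup functional $d_\tau$ (Proposition \ref{jiu25-T1}). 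It is surjective by extending through the isomorphism $B\otimes\K\cong A\otimes\K$. I would simply identify the two cones through this canonical homeomorphism, and this also preserves the pointwise topology.

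For (2), the key point is that both normalizations are governed by compact bases that avoid $0$. Let $S_A=\ol{\QT(A)}^w$ and $S_B=\{\tau:\|\tau|_B\|=1\}$. By Proposition \ref{elmy11-11}, applied to both $A$ and $B$ (both algebraically simple), each is compact and misses $0$. Since $B\subset\Ped(A\otimes\K)$, Lemma \ref{elmy07-01} gives $M:=\sup\{\|\tau|_B\|:\tau\in S_A\}<\infty$. I would also want $m:=\inf\{\|\tau|_B\|:\tau\in S_A\}>0$. To get this, take a strictly positive $e\in B$ with $\|e\|\le1$, so that $\|\tau|_B\|=d_\tau(e)\ge\tau(e)$. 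The function $\tau\mapsto\tau(e)$ is continuous on the compact set $S_A$ and is strictly positive there by simplicity, since $0\notin S_A$.

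Then for $\tau\in S_A$, the functional $\tau/\|\tau|_B\|$ is admissible in $\|\cdot\|_{2,B}$. This gives $\tau(x^*x)\le M\,\|x\|_{2,B}^2$, hence $\|x\|_2\le M^{1/2}\|x\|_{2,B}$. Conversely, for $\sigma$ with $\|\sigma|_B\|\le1$, write $\sigma=s\tau$ with $\tau\in S_A$ (or $\sigma=0$). Then $s\le 1/\|\tau|_B\|\le 1/m$, so $\sigma(x^*x)\le m^{-1}\|x\|_2^2$. One subtlety is that $\sigma$ may be a multiple of an element of $\ol{\QT(A)}^w$ rather than of $\QT(A)$; handling this needs the fact that $\R_+\cdot S_A=\TQT(A)$, which I would cite from the algebraically simple case, \cite[2.9]{FLosc}. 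This yields $c_0=M^{-1/2}$ and $c_1=m^{-1/2}$.

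Part (3) is then immediate from (2), since the two seminorms are equivalent on $B$. The main obstacle is (1), namely making precise that restriction identifies the 2-quasitrace cones on the stabilizations. If Brown's theorem feels too heavy, I would instead argue directly: $\Cu(B)\cong\Cu(A)$ via the inclusion, since $B$ is full hereditary, and $\TQT$ corresponds to functionals on $\Cu$ by Proposition \ref{jiu25-T1}.
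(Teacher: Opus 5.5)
Your proof is correct. For (1), (3), and the constant $c_0$ it is the paper's proof: Brown's stable isomorphism theorem gives (1), and your $M=\sup\{\|\tau|_B\|:\tau\in\ol{\QT(A)}^w\}<\infty$ (from \cite[Proposition 2.10 (2)]{FLosc}) is the paper's $C_0$.

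The difference is in $c_1$. The paper sets $C_1:=\sup\{\|\tau|_A\|:\tau\in\ol{\QT(B)}^w\}$. It obtains $C_1<\infty$ by applying \cite[2.9, 2.10 (2)]{FLosc} a second time with the roles of $A$ and $B$ interchanged, normalizing $\tau\in\ol{\QT(B)}^w$ to $\tau/\|\tau|_A\|\in\QT(A)$. This is where (1) is really used: it makes $\ol{\QT(B)}^w$ a compact subset of the same cone that does not contain $0$.

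You instead show directly that $m=\inf\{\|\tau|_B\|:\tau\in\ol{\QT(A)}^w\}>0$. For this you use only compactness of $\ol{\QT(A)}^w$, the fact that $0\notin\ol{\QT(A)}^w$, and continuity and strict positivity of $\tau\mapsto\tau(e)$ for a strictly positive contraction $e\in B$. The two constants play reciprocal roles (one checks $C_1\le 1/m$), so the conclusions agree. Your version makes (2) independent of (1): everything happens inside $\TQT(A)$, with compactness used only on the $A$-side. The paper's version is symmetric and purely citational.

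Two minor remarks. First, the ``subtlety'' you flag is not one. Algebraic simplicity of $A$ gives $0<\|\sigma|_A\|<\infty$ for every nonzero $\sigma\in\TQT(A)$, so $\sigma/\|\sigma|_A\|\in\QT(A)$ directly; the paper uses exactly this normalization. Second, in (1) the injectivity/surjectivity discussion via Proposition \ref{jiu25-T1} is unnecessary, because Brown's isomorphism already furnishes the identification.
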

\begin{proof}
(1) Since $A$ is simple and $\sigma$-unital, 
Brown's stable isomorphism theorem  \cite{Br77} 
ensures that
$A\otimes\K\cong B\otimes\K$ and hence 
$\TQT(A)=\TQT(B).$
%

(2) 
Let $\QT(B):=\{\tau\in\TQT(A)=\TQT(B):\|\tau|_B\|= 1\}.$ 
By \cite[2.9]{FLosc}, 
$\ol{\QT(B)}^w$ is compact and $0\not\in \ol{\QT(B)}^w.$ 
By \cite[Proposition 2.10 (2)]{FLosc}, we have 
$0<C_0:=\sup\{\|\tau|_B\|:\tau\in\ol{\QT(A)}^w\}<\infty$
and 
$C_1:=\sup\{\|\tau|_A\|:\tau\in \ol{\QT(B)}^w\}<\infty.$

Fix $x\in B.$ 
Let $\tau\in \ol{\QT(A)}^w$ and 
let $\tau_0:=\tau/\|\tau|_B\|\in \QT(B).$
Then $\|x\|_{2,\tau}=\|\tau|_B\|^{1/2} \|x\|_{2,\tau_0}
\le C_0^{1/2}\|x\|_{2,B}.$ 
Hence $\|x\|_{2}\le C_0^{1/2}\|x\|_{2,B}.$

Let $\tau\in  \ol{\QT(B)}^w$ and let $\tau_1:=\tau/\|\tau|_A\|\in\QT(A).$
Then $\|x\|_{2,\tau}=\|\tau|_A\|^{1/2}\cdot \|x\|_{2,\tau_1}\le C_1^{1/2}\|x\|_2.$
It follows that $\|x\|_{2,B}\le C_1^{1/2}\|x\|_2.$ 
Then $c_0:=1/C_0^{1/2}$ and $c_1:=C_1^{1/2}$ are the desired constants. 

(3) This is a corollary of (2). 
\end{proof}

\begin{thm}\label{elmy11-04}
Let $A$ be a $\sigma$-unital algebraically simple \CA\ with $\TQT(A)\neq\emptyset.$
If $A$ has tracial approximate oscillation zero, 
then $A$ is tracially diagonally  divisible. 
\end{thm}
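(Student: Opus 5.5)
The plan is to first reduce the problem to producing order zero maps whose images are tracially large inside a single hereditary subalgebra with small oscillation. Then I would glue finitely many such maps along a spectral decomposition of $c$.

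Fix $c\in\Ped(A\otimes\K)_+^1$, $n\in\N$ and $\ep>0$, and put $S:=\QTAW$.

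\textbf{Step 1: reduction to a step element.} Choose $N\in\N$ large and continuous functions $g_1,\dots,g_N$ on $[0,1]$ with mutually disjoint supports, $g_j$ supported near $j/N$, such that $\|c-\sum_j (j/N)\,g_j(c)\|_2<\ep/4$. The elements $g_j(c)$ are mutually orthogonal and lie in $\Her(c)$. Using tracial approximate oscillation zero of each $g_j(c)$, in the form of Lemma \ref{eler09-1}, I may further replace $g_j(c)$ by $e_j\in\Her(g_j(c))_+^1$ with small $\omega(e_j)$ and with $\tau(e_j)$, $\tau(e_j^2)$ and $d_\tau(e_j)$ all within a prescribed $\dt$ of one another, uniformly on $S$. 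In other words, each $e_j$ is tracially almost a projection, and $\|c-\sum_j (j/N)e_j\|_2<\ep/2$ still holds.

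\textbf{Step 2: local divisibility.} The key local claim is the following. For every $e\in\Ped(A\otimes\K)_+^1$ and $\dt>0$ there is a c.p.c.~order zero map $\psi:M_n\to\Her(e)$ with $\tau(\psi(1_n))>d_\tau(e)-\dt$ and $\|\psi(1_n)-\psi(1_n)^2\|_2<\dt$ for all $\tau\in S$.

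Granting this, apply it to each $e_j$ to get maps $\psi_j$. Set $\phi:=\sum_j (j/N)\psi_j$. Since the $\Her(e_j)$ are mutually orthogonal, $\phi$ is c.p.c.~order zero into $\Her(c)$. Moreover, $\psi_j(1_n)$ and $e_j$ are both tracially nearly the support of $e_j$, up to $\dt$. Hence $\tau((e_j-\psi_j(1_n))^2)$ is small, which I would estimate using Lemma \ref{yi15-1gg}. Summing over the orthogonal blocks gives $\|c-\phi(1_n)\|_2<\ep$.

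\textbf{Step 3: proof of the local claim, the main obstacle.} First apply Theorem \ref{eler03-1}(ii) to $e$. This yields orthogonal $b_k\in\Her(e)_+^1$ with $\sum_k\omega(b_k)$ small and $\sum_k\tau(b_k)$ close to $d_\tau(e)$. So it suffices to treat a single low-oscillation element $b$, and then truncate the resulting sum to finitely many $k$ via the Dini-type argument used in Lemma \ref{elmy05-04}.

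For a single $b$ I would use an exhaustion argument. Since $A$ is simple and non-elementary, $\Her(b)$ admits a nonzero order zero map from $M_n$. Applying the functional calculus of Proposition \ref{elmy06-01}, I replace the image by one whose unit has oscillation at most $1/n$. Then Lin's orthogonality lemma (Lemma \ref{yi04-1}) applied inside $\Her(b)$ produces an orthogonal complement $c'$ with $d_\tau(c')\approx d_\tau(b)-d_\tau(\text{image})$, and $c'$ again has small oscillation. Iterating on $c'$ gives a sequence of orthogonal order zero maps whose sum is order zero.

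The difficulty is to ensure that the remaining trace $\sup_{\tau\in S}d_\tau(c'_k)$ tends to $0$. This requires a uniform lower bound on the trace captured at each step, relative to what is left, without any strict comparison hypothesis. My first attempt would exploit small oscillation, which makes $\widehat{[c'_k]}$ nearly continuous. Using Proposition \ref{20251221-1}, I would compare $\widehat{[c'_k]}$ to a continuous affine function. Then, in the spirit of Zhang's divisibility argument, I would pass to the tracial sequence algebra: there the low-oscillation elements become honest projections, and they can be halved repeatedly in simple non-elementary algebras. This gives order zero images capturing a fixed fraction, say $1/2$, of the remaining trace at each step. That forces geometric decay of $d_\tau(c'_k)$ and completes the proof.
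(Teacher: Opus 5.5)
Step 1 fails as stated, and repairing it requires the main input of the paper's proof. For the $g_j(c)$ to be mutually orthogonal you need $g_jg_k=0$ on ${\rm sp}(c)$. Suppose ${\rm sp}(c)=[0,1]$ and two of the bumps near $j/N$ lie in $[1/2,1]$. Then there is a point $t_0\ge 1/2$ between them at which every $g_j$ vanishes, so $\sum_j (j/N)g_j(t_0)=0$. Write $\mu_\tau$ for the spectral measure of $c$ under $\tau$. Every $\tau\in S$ then gives $\|c-\sum_j (j/N)g_j(c)\|_{2,\tau}^2\ge \mu_\tau(\{t_0\})\,t_0^2$, and nothing in the hypotheses makes these atoms uniformly small. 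For instance, take a simple Goodearl-type diagonal AH algebra $\varinjlim C([0,1],M_{k_m})$ with a summably small proportion of point evaluations; it has stable rank one, hence tracial approximate oscillation zero. The image $c$ of $t\mapsto t$ from the first stage admits, for every $x\in[0,1]$, a tracial state $\tau_x$ with $\mu_{\tau_x}\ge \frac12\delta_x$. So no choice of disjoint bumps gives a $2$-norm step approximation. Lemma \ref{eler09-1} cannot repair this, since it only replaces $g_j(c)$ by elements of $\Her(g_j(c))$, which are still invisible at $t_0$. A step approximation has to be obtained by perturbing $c$ outside $C^*(c)$, and that is the missing idea. The paper uses that $C=l^\infty(B)/J_B$, $B=\Her(a)$, has real rank zero (\cite[Theorem 6.4]{FLosc}). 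This yields mutually orthogonal projections $p_i\in\Her_C(a)$ and scalars $r_i\in(0,1)$ with $\|a-\sum_i r_ip_i\|<\dt/8$ in norm.

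The second load-bearing step, the last sentence of Step 3, is asserted rather than proved. The algebra $l^\infty(A)/J_A$ is not simple. A single element of small oscillation does not become a projection there; you need a sequence with $\omega\to 0$, as in the proof of Proposition \ref{elmy05-01}. Halving projections in the tracial sequence algebra in Zhang's manner is a substantial result in its own right, namely \cite[Lemmas 7.4, 7.6]{FLosc}, recorded here as Corollary \ref{elmy05-cor1}. That corollary splits a projection into $n$ mutually equivalent subprojections plus a remainder of uniformly small trace in one stroke, so your exhaustion via Lemma \ref{yi04-1} is unnecessary. The paper applies it to each $p_i$, forms the order zero map $\sum_i r_ih_i:M_n\to C$, lifts it to $l^\infty(B)$ by projectivity of the cone over $M_n$, and picks a good component. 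With these two ingredients in place, your overall architecture (step approximation, splitting each step, gluing orthogonal blocks) is the paper's proof.

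Two smaller points. In Step 2, Lemma \ref{yi15-1gg} bounds differences of traces by $2$-norms, not conversely, and for $2$-quasitraces you cannot expand $\tau((e_j-\psi_j(1_n))^2)$; that estimate is best done with honest projections in the sequence algebra. Also, non-elementarity, which you use, is genuinely needed (the statement fails for $A=\C$), and the paper also uses it through Corollary \ref{elmy05-cor1}.
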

\begin{proof}
Let $a\in \Ped(A\otimes\K)_+^1,$
$n\in\N,$ and $\ep>0.$ 
Let $B:=\Her(a).$ 
By Lemma \ref{elmy08-04}, there is $\dt\in (0,\ep/4)$ such that 
if $x\in B_+^1$ satisfying $\|x\|_{2,B}<\dt,$ then $\|x\|_2<\ep/4.$
By 
\cite[Proposition 5.4]{FLosc}, 
$B$ also has tracial approximate oscillation zero. 
Let $C:=l^\infty(B)/J_B$ and let $\pi: l^\infty(B)\to C$ 
be the quotient map. 
By \cite[Theorem 6.4]{FLosc}, 
$C$ has real rank zero. 
Then there are $m\in\N,$ $r_1,...,r_m\in (0,1),$ 
and projections $p_1,...,p_m\in\Her_C(a)_+$ such that 
\beq\label{elmy08-03}
\left\|a-\sum_{i=1}^mr_i p_i\right\|<  \dt/8.
\eneq
Let $B_i:=p_iCp_i,$ $i=1,...,m.$ 
By Corollary \ref{elmy05-cor1}, for each $i=1,...,m,$
there are mutually orthogonal 
projections $q_{i,1},...,q_{i,n}, q_{i,n+1}\in C$ 
satisfy the following:  

(1) $p_i=\sum_{j=1}^{n+1}q_{i,j};$

(2) $q_{i,1},...,q_{i,n}$ are mutually equivalent; 

(3)
$q_{i,n+1}=\pi(\{e_{i,k}\}_{k\in\N})$ 
with $\{e_{i,k}\}_{k\in\N}\in l^\infty(B)_+^1$ and $\|e_{i,k}\|_{2,B}<\dt/8$ for all $k.$ 
\\
By (2), there is an injective *-homomorphism $h_i:M_n\to B_i$ 
such that 
\beq\label{elmy08-01}
h_i(1_n)=\sum_{j=1}^{n}q_{i,j}\le p_i
\quad (i=1,...,m).
\eneq
Since $p_1,...,p_m$ are mutually orthogonal, 
the following $h$ is a c.p.c.~order zero map: 
\beq
h:=\sum_{i=1}^m r_ih_i: M_n\to C, 
\quad
x\mapsto \sum_{i=1}^m r_ih_i(x). 
\eneq
Note that 
\beq\label{elmy08-02}
h(1_n)+\sum_{i=1}^mr_iq_{i,n+1}
\overset{\eqref{elmy08-01}}{=}
\sum_{i=1}^mr_i \left(\sum_{j=1}^{n+1} q_{i,j} \right)
\overset{(1)}{=}
\sum_{i=1}^mr_i p_i.
\eneq
By \cite[Corollary 4.1]{WZ09OZ} and projectivity of $CM_n,$
there is a c.p.c.~order zero map $\phi: M_n\to l^\infty(B)$ such that 
$h=\pi\circ\phi.$ Let $\phi_k:M_n\to B=\Her(a)$ be the components of $\phi.$
For $k\in\N,$ Let 
$
e_k:=\sum_{i=1}^m r_i e_{i,k}. 
$
Then 
\beq\label{elmy13-01}
\pi(\{\phi_k(1_n)+e_k\}_{k\in\N})
\overset{(3)}{=}h(1_n)+\sum_{i=1}^m r_i  q_{i,n+1}
\overset{\eqref{elmy08-02}}{=}
\sum_{i=1}^mr_i p_i. 
\eneq
Then 
\beq
\|a- \pi(\{\phi_k(1_n)+e_k\}_{k\in\N})\|
\overset{\eqref{elmy13-01}}{=}\left\|a- \sum_{i=1}^mr_i p_i\right\|
\overset{\eqref{elmy08-03}}{<}  \dt/8. 
\eneq
Hence there is $\{c_k\}\in (J_B)_+$ such that for all $k\in\N,$
\beq\label{elmy08-08}
\|a- (\phi_k(1_n)+e_k+c_k)\|< \dt/8. 
\eneq
By (3), $\|e_{i,k}\|_{2,B}<\dt/8$ for all $k.$ 
Then \eqref{elmy08-08} and the fact that 
$\lim_{k\to\infty}\|c_k\|_{2,B}=0$
show that  
for sufficiently large $k_0,$ 
$\|a- \phi_{k_0}(1_n)\|_{2,B}<\dt.$ By the choice of $\dt,$ we have
$\|a- \phi_{k_0}(1_n)\|_{2}<\ep,$ thus completing the proof.
\end{proof}

\section{Tracial almost divisibility implies weak tracial diagonal  divisibility}

\begin{df}
{\rm (Winter, \cite[Definition 3.5 (ii)]{W12pure},
see also \cite[Definition 2.7 (iv)]{T14})}
Let $A$ be an algebraically simple \CA\ with $\QT(A)\neq\emptyset.$
Let $m\in\N\cup\{0\}.$
We say $A$ is tracially $m$-almost divisible, 
if for all $m\in\N$ and all $a\in M_m(A)_+^1,$ 
all $n\in\N,$ and all $\ep>0,$
there is a c.p.c.~order zero map $\phi: M_n\to\Her(a)$
such that $\tau(1_n)\ge \frac{1}{m+1}\cdot \tau(a)-\ep$ for all $\tau\in\QT(A).$ 

We say $A$ is tracially almost divisible, if $A$ is tracially $0$-almost divisible. 
\end{df}

The following is Tikuisis' theorem 
with minor modifications. 
The proof that Tikuisis' theorem implies the following theorem is standard. We will omit it.
See also \cite[Lemma 5.11]{W12pure} for the unital case. 

\begin{thm}\label{elmy13}
{\rm (Tikuisis, \cite[Theorem 7.6]{T14})}
Let $A$ be a separable, algebraically simple, 
exact $C^*$-algebra with tracial $m$-almost divisibility
and with $\QT(A)\neq\emptyset.$ 
Let $B\subset A$ 
be a separable $C^*$-subalgebra with 
finite nuclear dimension, 
and let $k\in\N.$ 

Then there exists a sequence of c.p.c.~order zero maps  
$\psi_i: M_k\to A$ satisfying the following: 
Let   $\Psi: M_k\to l^\infty(A)/c_0(A)$ be 
the map induced by $\{\psi_i\}_{i\in\N}.$ Then 
\beq
&
\Psi(M_k)\subset(l^\infty(A)/c_0(A))\cap B',\quad\text{and }
&\\&
\lim_{i\to\infty} \sup\{|\tau(b-\psi_i(1_k)b)|: \tau\in \QT(A)\}=0
\quad\text{ for all }b\in B.&
\eneq
\end{thm}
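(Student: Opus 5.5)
This is deduced from Tikuisis' theorem \cite[Theorem 7.6]{T14}, which is stated for unital (or at least $\sigma$-unital, with $\rT(A)$ in place of $\QT(A)$) simple separable exact tracially $m$-almost divisible algebras. The plan is to reduce to that setting and then translate the conclusion.

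First, since $A$ is exact, Haagerup's theorem \cite{Haa} gives $\QT(A)=\rT(A)$, so every quasitrace occurring in the statement is a bounded trace. Next, $B$ is separable, so we can pick a strictly positive contraction $h\in B$. The relevant traces then live on $\Her(h)$, and $B$ sits inside the hereditary subalgebra it generates. If the needed form of \cite[7.6]{T14} requires a unit, we pass to a corner: work in the minimal unitization, or note that $\Ped(A)$ contains the relevant hereditary pieces and use Lemma \ref{elmy08-04} and Brown's theorem to see that tracial $m$-almost divisibility and the tracial data pass between $A$ and $\Her(e)$ for a full $e\in \Ped(A)_+$. From this we get the central order zero maps $\psi_i$ into $A$, with $\Psi(M_k)$ commuting with $B$ modulo $c_0(A)$. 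By the order zero functional calculus \cite{WZ09OZ} and the projectivity of $CM_k$, such a lift always exists from the sequence-algebra map.

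Second, we turn Tikuisis' tracial conclusion into the stated form. Tikuisis gives $\lim_i\sup_{\tau}\tau(b^*b - b^*\psi_i(1_k)b)=0$, or equivalently that $1-\Psi(1_k)$ is tracially null against $B$. We convert this to $|\tau(b-\psi_i(1_k)b)|\to 0$ using the Cauchy--Schwarz inequality for traces:
\[
|\tau((1-\psi_i(1_k))b)|\le \tau((1-\psi_i(1_k))^2)^{1/2}\,\|b\|_{2,\tau}.
\]
The first factor is localized by writing $b=b_0h$ up to small norm error, which keeps everything inside $\Ped$ and makes the trace finite. Uniformity over $\QT(A)$ follows because $\ol{\QT(A)}^w$ is compact (Proposition \ref{elmy11-11}) and the estimates are uniform in the trace.

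The main obstacle is the non-unital bookkeeping: Tikuisis phrases the tracial condition against a unit or normalized traces. We must check that normalizing via $\|\tau|_A\|=1$ instead of $\tau(1)=1$ changes the estimates only by the constants of Lemma \ref{elmy08-04}(2). These constants are uniform by compactness, so the limit statement is unaffected.
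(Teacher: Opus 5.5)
Your overall route is exactly the standard deduction the paper has in mind and omits:
\begin{itemize}
\item apply \cite[Theorem 7.6]{T14} to $A$ itself;
\item lift the resulting order zero map $M_k\to l^\infty(A)/c_0(A)$ through projectivity of the cone over $M_k$ \cite{WZ09OZ};
\item use exactness so that $\QT(A)$ consists of tracial states.
\end{itemize}
However, three of your steps are wrong as written.

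\textbf{The unital reduction.} No reduction to a unital algebra is needed, and the ones you offer are not available. \cite{T14} is formulated for separable, algebraically simple, possibly stably projectionless algebras; the unital version is \cite[Lemma 5.11]{W12pure}. That is why the statement assumes ``algebraically simple'' rather than ``unital''. The minimal unitization $\wtd A$ is not simple. It is not tracially $m$-almost divisible either, since its character annihilates every order zero image of $M_n$ for $n\ge 2$. A hereditary subalgebra $\Her(e)$ need not be unital (in the stably projectionless case none is), and it need not contain $B$.

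\textbf{The Cauchy--Schwarz bound.} Your displayed estimate does not give what you need. Note that $1-\psi_i(1_k)\notin A$, and for the extension $\tilde\tau$ of $\tau\in\QT(A)$ to $\wtd A$ one has $\tilde\tau\bigl((1-\psi_i(1_k))^2\bigr)\ge(1-\tau(\psi_i(1_k)))^2$. This need not tend to $0$, because $\psi_i(1_k)$ is only a tracial unit for $B$, not for $A$. So the problem is smallness, not finiteness; every $\tau\in\QT(A)$ is a bounded functional on $A$. The localized form does work. For $b=b_0h$ with $h\in B_+$, one has $|\tau(b-\psi_i(1_k)b)|=|\tau(h(1-\psi_i(1_k))b_0)|\le\tau\bigl(h(1-\psi_i(1_k))^2h\bigr)^{1/2}\|b_0\|$, and $h(1-\psi_i(1_k))^2h\le h^2-h\psi_i(1_k)h$. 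If the cited conclusion is already $\tau(\Psi(1_k)b)=\tau(b)$ for all $b\in B$, this step is unnecessary altogether.

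\textbf{Uniformity.} Compactness of $\ol{\QT(A)}^w$ gives no uniformity in $i$. Uniformity is exactly what the sequence-algebra formulation encodes. Your ``or equivalently'' is the one step that actually has to be argued, and the argument is by contradiction:
\begin{itemize}
\item suppose $\sup_{\tau\in\QT(A)}|\tau(b-\psi_i(1_k)b)|\ge\ep$ for infinitely many $i$;
\item choose witnessing $\tau_i\in\QT(A)$ and a free ultrafilter $\omega$ containing those indices;
\item then $\lim_{i\to\omega}\tau_i$ is a limit trace on $l^\infty(A)$ vanishing on $c_0(A)$, with $|\tau(b-\Psi(1_k)b)|\ge\ep$;
\item this contradicts the tracial conclusion for $\Psi$.
\end{itemize}
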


The following theorem is a direct consequence of 
Tikuisis' theorem. 

\begin{thm} \label{elmy10-09} 
Let $A$ be a separable algebraically simple  exact \CA\ 
with $\QT(A)\neq\emptyset$ and 
with tracial $m$-almost divisibility for some $m\in\N\cup\{0\}.$  
Then $A$ is weakly tracially diagonally divisible. 
\end{thm}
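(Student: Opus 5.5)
We apply Theorem \ref{elmy13} with $B$ the commutative \CA\ generated by the element to be divided, so that $B$ has finite nuclear dimension; tracial closeness of $\psi_i(1_k)e$ to $e$ then gives $\tau(\phi(1_k))\approx\tau(e)$ once the order zero map is compressed into $\Her(e)$. Let $e\in\Ped(A\otimes\K)_+^1$, $n\in\N$ and $\ep>0$. First reduce to the case $e\in A_+^1$. The hereditary subalgebra $D:=\Her(e)\subset\Ped(A\otimes\K)$ is $\sigma$-unital and algebraically simple. By Brown's theorem (as in Lemma \ref{elmy08-04}) $D\otimes\K\cong A\otimes\K$ and $\TQT(D)=\TQT(A)$. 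Separability and exactness pass to $D$, and tracial $m$-almost divisibility is a statement about $\Cu$/traces and order zero maps into hereditary subalgebras of matrix amplifications, so it passes to $D$ as well. Moreover $\ol{\QT(A)}^w$ and $\ol{\QT(D)}^w$ differ only by the bounded, bounded-away-from-zero rescalings of Lemma \ref{elmy08-04}(2). So it suffices to work in $D$, where $e$ is a strictly positive element.

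Apply Theorem \ref{elmy13} to $D$ with $B:=C^*(e)$, which has nuclear dimension zero, and $k=n$. This gives order zero maps $\psi_i:M_n\to D$, asymptotically commuting with $e$, such that $\sup_{\tau\in\QT(D)}|\tau(e)-\tau(\psi_i(1_n)e)|\to0$. Next compress into $\Her(e)$ by setting $\phi_i(x):=e^{1/2}\psi_i(x)e^{1/2}$. Since $\Psi$ commutes with $e$, hence with $e^{1/2}$, the map $\{\phi_i\}$ induces $x\mapsto e\Psi(x)$ in $l^\infty(D)/c_0(D)$. This is c.p.c.\ order zero, because $e$ commutes with the image of an order zero map and $\|e\|\le1$. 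Projectivity of the cone $CM_n$ (\cite[Corollary 4.1]{WZ09OZ}) lets us lift it to genuine c.p.c.\ order zero maps $\phi_i':M_n\to D$ with $\|\phi_i'(x)-\phi_i(x)\|\to0$ for all $x$.

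The lift need not land in $\Her(e)$ a priori, but $D=\Her(e)$, so it does. Then
\[
\tau(\phi_i'(1_n))\approx\tau(e^{1/2}\psi_i(1_n)e^{1/2})=\tau(\psi_i(1_n)e),
\]
using the trace property; by exactness all quasitraces are traces \cite{Haa}. The norm error costs at most $\|\tau|_D\|\cdot\|\phi_i'(1_n)-\phi_i(1_n)\|$, which is uniformly small because $\sup\{\|\tau|_D\|:\tau\in\QTAW\}<\infty$ (Lemma \ref{elmy07-01}). Combining this with the conclusion of Theorem \ref{elmy13} gives, for large $i$,
\[
\sup_{\tau\in\QTAW}|\tau(\phi_i'(1_n))-\tau(e)|<\ep.
\]
Here $\QT(D)$ and $\QT(A)$ are matched by the uniform rescaling, and the estimate passes to the closures by continuity.

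The main obstacle is the reduction step: verifying that tracial $m$-almost divisibility and the hypotheses of Theorem \ref{elmy13} transfer to $D=\Her(e)$ when $e$ lies in $\Ped(A\otimes\K)$ rather than in $A$. If that transfer is troublesome, the fallback is to apply Theorem \ref{elmy13} directly in a matrix algebra $M_r(A)$ containing $e$ up to approximation, since $e\in\Ped(A\otimes\K)$ can be approximated within $M_r(A)$ and $M_r(A)$ inherits all the hypotheses. Either way, the remaining steps are the routine estimates above.
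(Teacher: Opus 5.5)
Your proposal is correct and follows the paper's proof essentially step for step: apply Tikuisis' theorem inside $A_0=\Her(e)$ with $B=C^*(e)$, compress the resulting maps by $e$ (your $e^{1/2}\psi_i(\cdot)e^{1/2}$ induces the same map $x\mapsto \Psi(x)e$ in $l^\infty(A_0)/c_0(A_0)$ as the paper's), lift to genuine c.p.c.\ order zero maps by projectivity, and transfer the uniform estimate from $\QT(A_0)$ to $\ol{\QT(A)}^w$ using the bound $N=\sup\{\|\tau|_{A_0}\|:\tau\in\ol{\QT(A)}^w\}<\infty$. One small slip: $C^*(e)\cong C_0({\rm sp}(e)\setminus\{0\})$ has nuclear dimension at most $1$, not necessarily $0$, which is harmless because only finiteness is used; also, the paper applies Theorem~\ref{elmy13} directly to $\Her(e)$ and checks only its algebraic simplicity, so it treats the transfer of hypotheses you flag as routine, and your $M_r(A)$ fallback is not needed.
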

\begin{proof}
Let $e\in\Ped(A\otimes\K)_+^1,$  $k\in\N,$ 
and $\ep>0.$ 
Then $B:=C^*(e)$ has nuclear dimension at most $1.$
Note that $A_0:=\Her(e)$ is algebraically simple (\cite[5.6]{CP79}). 
Since $A$ is algebraically simple, 
$0\not\in \ol{\QT(A)}^w$ (\cite[2.9]{FLosc}). 
Let $N:=\sup\{\|\tau|_{A_0}\|:\tau\in\ol{\QT(A)}^w\}<\infty$
(\cite[2.10 (2)]{FLosc}). 
{{Let $\dt:=\ep /(N+1).$}}
Let $\pi: l^\infty(A_0)\to  l^\infty(A_0)/c_0(A_0)$ be the quotient map. 
By Theorem \ref{elmy13}, 
there is a sequence of  c.p.c.~order zero maps $\psi_i: M_k\to A_0$
($i\in\N$) satisfying the following: Let $\Psi: M_k\to l^\infty(A_0)/c_0(A_0)$ be the map induced by $\{\psi_i\}_{i\in\N}.$ Then 
\beq
&
\Psi(M_k)\subset(l^\infty(A_0)/c_0(A_0))\cap B',\quad\text{and }
&\label{elmy13-02}
\\&
\lim_{i\to\infty} \sup\{|\tau(b- \psi_i(1_k)b)|: \tau\in \QT(A_0)\}=0
\quad\text{ for all }b\in B.&
\label{elmy13-03}
\eneq
Then \eqref{elmy13-02} shows that the map 
$\Phi:M_k\to (l^\infty(A_0)/c_0(A_0))\cap B',$
$x\mapsto \Psi(x)e$ is also c.p.c.~order zero. 
Note that $\Phi(1_k)=\Psi(1_k)e.$
Let $\bar \Phi: M_k\to l^\infty(A_0)$ be a c.p.c.~order zero lifting of $\Phi$ 
and let $\{\phi_i\}_{i\in\N}$ be the components of $\bar\Phi.$
Then $\lim_i\|\phi_i(1_k)- \psi_i(1_k)e\|=0.$ 
This together with \eqref{elmy13-03} show that there is $n_0\in\N$
such that 
\beq
\sup\{|\tau(e)-\tau(\phi_{n_0}(1_k))|:\tau\in\QT(A_0)\}<\dt.
\eneq
For every $\lambda\in \QT(A),$ 
let $\lambda_0:=\lambda/\|\lambda|_{A_0}\|\in\QT(A_0).$
Then 
\beq
|\lambda(e)-\lambda(\phi_{n_0}(1_k))|=\|\lambda|_{A_0}\|\cdot |\lambda_0(e)-\lambda_0(\phi_{n_0}(1_k))|\le N \cdot \dt<\ep.
\eneq
Then the theorem follows. 
\end{proof}

\section{Proof of Theorem \ref{elmy11-10}}
Summarizing all the results, we come to the proof of Theorem \ref{elmy11-10}. 


\ \\
{\bf Proof of Theorem \ref{elmy11-10}: }

$(1)\Rightarrow(2)$ is Corollary \ref{elmy11-03}.

$(2)\Rightarrow(3)$ is \cite[Theorem 6.4]{FLosc} 
and 
$(3)\Rightarrow(2)$ is in \cite[Theorem 3.12]{Fu2025}. 

$(3)\Rightarrow(4)$ is by the equivalence of $(3)$ and $(2)$ and Theorem \ref{elmy11-04}. 

$(4)\Rightarrow(5)$ is Proposition \ref{elmy11-07}. 

$(5)\Rightarrow(6)$ is Proposition \ref{elmy06-07}.

$(6)\Rightarrow(7)$ is trivial.

$(7)\Rightarrow(8)$ (assuming $A$ exact) is Theorem \ref{elmy10-09} (see also \cite[Theorem 7.6]{T14}).

$(8)\Rightarrow(1)$ (assuming $A$ exact) is Theorem \ref{elmy11-05}.  

\qed


The following direct corollary is reminiscent of Antoine-Perera-Robert-Thiel's result that simple $(m,n)$-pure \CAs\ are pure 
(\cite[Theorem D]{APRT-JFA}). 
Here, we need not assume any comparison property for positive elements.

\begin{cor}
Let $A$ be an algebraically simple separable exact \CA\ with $\QT(A)\neq\emptyset.$
Then $A$ is tracially almost divisible if and only if $A$ is  tracially $m$-almost divisible for some $m\in\N\cup\{0\}.$
\qed
\end{cor}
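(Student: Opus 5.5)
The trivial direction comes first: tracial almost divisibility is by definition tracial $0$-almost divisibility, so it gives tracial $m$-almost divisibility with $m=0$. For the converse, I will route the argument through the chain of implications in Theorem \ref{elmy11-10}, which applies because $A$ is algebraically simple, separable, exact, and $\QT(A)\neq\emptyset$. The strategy is to start from condition (7), travel around the cycle, and land on (6).

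The steps, in order, are these.

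First, assume $A$ is tracially $m$-almost divisible for some $m\in\N\cup\{0\}$. Theorem \ref{elmy10-09} uses exactness and Tikuisis' Theorem \ref{elmy13}, and it shows that $A$ is weakly tracially diagonally divisible.

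Second, Theorem \ref{elmy11-05}, which again uses exactness so that 2-quasitraces are traces, shows that $\Gamma$ is hereditary dense.

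Third, Corollary \ref{elmy11-03} gives tracial approximate oscillation zero.

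Fourth, Theorem \ref{elmy11-04} shows that $A$ is tracially diagonally divisible.

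Fifth, Proposition \ref{elmy11-07} gives Property (TM).

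Finally, Property (TM) implies tracial almost divisibility. This is the step $(5)\Rightarrow(6)$ of Theorem \ref{elmy11-10}, cited there as Proposition \ref{elmy06-07}.

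There are some hypotheses to check along the way. The theorem is stated for a non-elementary $A$ with $\TQT(A)\neq\{0\}$. The condition $\QT(A)\neq\emptyset$ gives $\TQT(A)\neq\{0\}$. If $A$ is elementary, then $A\cong\K$ or $A\cong M_r$. In the matrix case, $A$ is not tracially almost divisible for large $n$, but it is also not tracially $m$-almost divisible once $n>(m+1)r$ and $\ep$ is small: an order zero image of $M_n$ inside a hereditary subalgebra of rank at most $r$ must vanish. So both sides of the equivalence fail. The case $A\cong\K$ is handled in the same way, working inside the finite-rank hereditary subalgebras. Hence the equivalence holds trivially for elementary $A$, and we may assume $A$ is non-elementary.

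The main obstacle is the last link, $(5)\Rightarrow(6)$. It requires turning Property (TM), which gives an order zero $\phi:M_n\to\Her(a)$ with $\|\phi(1_n)a-a\|_2$ small, into the trace estimate $\tau(\phi(1_n))\ge\tau(a)-\ep$. I would argue as follows. By Lemma \ref{elmy07-01}, $|\tau(a)-\tau(\phi(1_n)a)|\le 3M^{3/4}\|a-\phi(1_n)a\|_2^{1/2}$; to apply it to the non-positive element $\phi(1_n)a$, I would compare $a^{1/2}\phi(1_n)a^{1/2}$ with $a$ in $2$-norm. Then I would use $\tau(a^{1/2}\phi(1_n)a^{1/2})\le\tau(\phi(1_n))$, which holds because $\|a\|\le1$; this is where the quasitrace (rather than trace) setting needs care. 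Taking the supremum over $\tau\in\QT(A)$ then completes the argument.
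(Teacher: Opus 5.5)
Your proposal is correct and follows the paper's route. The paper states the corollary with \qed as an immediate consequence of $(6)\Leftrightarrow(7)$ in Theorem \ref{elmy11-10}, whose proof runs through exactly the cycle $(7)\Rightarrow(8)\Rightarrow(1)\Rightarrow(2)\Rightarrow(4)\Rightarrow(5)\Rightarrow(6)$ that you list. Your last paragraph only re-derives Proposition \ref{elmy06-07}, which you already cite; the paper works with $\phi(1_n)a\phi(1_n)$ there instead of $a^{1/2}\phi(1_n)a^{1/2}$, so that 2-norm comparison step is not needed.

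Treating the elementary case separately is a worthwhile addition, since Theorem \ref{elmy11-10} assumes non-elementarity and the corollary does not. Note that $A\cong\K$ cannot occur here: $\K$ is not algebraically simple, and $\QT(\K)=\emptyset$. So only $A\cong M_r$ needs the argument you give.
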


Recall that the Cuntz semigroup $\Cu(A)$ of $A$ is said to be almost divisible, if for all $a\in M_\infty(A)$ and all $k\in\N$ there is $x\in \Cu(A)$ such that $kx\le [a]\le (k+1)(m+1)x$ (\cite[Definition 3.5]{W12pure}). 
See also \cite[Proposition 5.2]{CETW-Gamma} 
for equivalent descriptions of almost divisible when strict comparison is provided.

\begin{cor}
Let $A$ be an algebraically simple separable exact \CA\ with $\QT(A)\neq\emptyset.$
If  $\Cu(A)$ is $m$-almost divisible for some $m\in\N\cup\{0\},$
then $l^\infty(A)/J_A$ has real rank zero. 
\end{cor}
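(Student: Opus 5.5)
The plan is to reduce to Theorem \ref{elmy11-10}: show that $m$-almost divisibility of $\Cu(A)$ forces tracial $m'$-almost divisibility of $A$ for some $m'$ (perhaps $m'=m$ or a slightly larger constant). Then $(7)\Rightarrow(3)$ of Theorem \ref{elmy11-10}, which holds because $A$ is exact, gives that $l^\infty(A)/J_A$ has real rank zero.

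\textbf{Setting up.} Let $a\in M_r(A)_+^1$ (or more generally $a\in\Ped(A\otimes\K)_+^1$), $n\in\N$ and $\ep>0$. We need a c.p.c.\ order zero map $\phi:M_n\to\Her(a)$ with $\tau(\phi(1_n))\ge \frac{1}{m'+1}\tau(a)-\ep$ for all $\tau\in\QT(A)$.

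\textbf{Step 1: using Cuntz almost divisibility.} Pick $\eta>0$ so small that $\tau((a-\eta)_+)>\tau(a)-\ep/2$ uniformly on the compact set $\QTAW$; this uses the fact that $\|\tau|_{\Her(a)}\|$ is bounded there. Apply $m$-almost divisibility to $[(a-\eta)_+]$ with $k:=n$ to get $x\in\Cu(A)$ with
\[
n x\le [(a-\eta)_+]\le (n+1)(m+1)x .
\]
Since $(a-\eta)_+$ is compactly contained in $[a]$, we have $nx\ll [a]$. Hence there are mutually orthogonal, mutually Cuntz equivalent positive elements $x_1,\dots,x_n$ in $\Her(a)$ representing $x$ (or a slightly smaller $x'\ll x$). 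These can be realized inside $\Her(a)\otimes M_n$ as $\mathrm{diag}(y,\dots,y)\lesssim a$ with $[y]=x'$.

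\textbf{Step 2: producing the order zero map.} Compose the map $M_n\to M_n(\Her(y))$, $z\mapsto z\otimes y$, which is c.p.c.\ order zero, with a Cuntz subequivalence into $\Her(a)$. Concretely, $f_\delta(y)\otimes 1_n\lesssim a$ gives $v$ with $v^*v=(f_\delta(y)-\delta')_+\otimes 1_n$ and $vv^*\in\Her(a)$. Then $z\mapsto v(z\otimes g(y))v^*$, with $g$ a function supported where $f_\delta(y)$ is large, is c.p.c.\ order zero into $\Her(a)$. Its trace at $1_n$ is approximately $n\,\tau(g(y))$, and this can be made close to $n\,d_\tau(y)$ from below, up to the oscillation.

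\textbf{Step 3: the main obstacle.} Lower semicontinuity is the difficulty: $d_\tau(y)$ need not be approximated uniformly by $\tau(g(y))$ on $\QTAW$. What can be arranged uniformly is
\[
n\,\tau(g(y))\ \ge\ \frac{n}{(n+1)(m+1)}\,\tau((a-\eta)_+)-\ep/2 .
\]
To get this, I would apply the comparison $[(a-\eta)_+]\le (n+1)(m+1)x$ to a compactly contained piece $(a-2\eta)_+$. That yields $(a-2\eta)_+\lesssim \bigoplus^{(n+1)(m+1)}(y-\delta)_+$ for some $\delta$, and hence
\[
\tau((a-2\eta)_+)\le (n+1)(m+1)\,d_\tau((y-\delta)_+)\le (n+1)(m+1)\,\tau(g(y))
\]
whenever $g=1$ on $[\delta,\infty)$. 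The bound is now uniform with no Dini argument needed.

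\textbf{Conclusion.} Since $n/(n+1)\ge 1/2$, we get
\[
\tau(\phi(1_n))\ge \frac{1}{2(m+1)}\tau(a)-\ep ,
\]
so $A$ is tracially $(2m+1)$-almost divisible. By Theorem \ref{elmy11-10}, $(7)\Rightarrow(3)$, $l^\infty(A)/J_A$ has real rank zero.
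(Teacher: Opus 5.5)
Your proposal is correct. At the top level it is the paper's argument: obtain tracial almost divisibility with some constant, then invoke $(7)\Rightarrow(3)$ of Theorem \ref{elmy11-10}.

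The difference lies in the first step. The paper simply cites Tikuisis \cite[Proposition 2.9 (v)]{T14} (Winter \cite[Proposition 3.8]{W12pure} in the unital case), which gives tracial $m$-almost divisibility with the \emph{same} $m$. You instead prove the step by hand, using R{\o}rdam's lemma and the pointwise inequality $d_\tau((a-2\eta)_+)\le (n+1)(m+1)\,d_\tau((z-\delta)_+)$. This makes the argument self-contained, but you lose the factor $n/(n+1)\ge 1/2$ and land at $2m+1$. That loss is harmless, since (7) only asks for some $m$. If you wanted the sharp constant, apply divisibility with $k=nL$ and restrict along the unital embedding $M_n\hookrightarrow M_{nL}$; since $nL/(nL+1)\to 1$, this recovers $m$.

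When you write it out, fix the order of choices:
\begin{enumerate}
\item With $x=[z]$, first pick $\delta$ with $[(a-2\eta)_+]\le (n+1)(m+1)[(z-\delta)_+]$. Use $z$ itself here, not a smaller $x'\ll x$; otherwise this comparison is not justified.
\item Then apply R{\o}rdam's lemma to $f_{\delta/4}(z)\otimes 1_n\lesssim (a-\eta)_+$.
\item Take $g$ vanishing on $[0,\delta/2]$ and equal to $1$ on $[\delta,\infty)$.
\end{enumerate}
As written, your Steps 2 and 3 use the same $\delta$ for two incompatible thresholds: a $g$ equal to $1$ on $[\delta,\infty)$ cannot be supported where $f_\delta(y)$ is close to $1$.
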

\begin{proof}
By \cite[Proposition 2.9 (v)]{T14}, 
$A$ is tracially $m$-almost divisible
(see \cite[Proposition 3.8]{W12pure} for the unital case).
Then apply Theorem \ref{elmy11-10}. 
\end{proof}

\begin{cor}\label{elmy20-05}
Let $A$ be an algebraically simple 
separable exact stable rank one \CA.
Then $A$ has all the properties listed in Theorem \ref{elmy11-10}. 
\end{cor}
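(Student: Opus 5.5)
The plan is to reach property (3) of Theorem \ref{elmy11-10}, or alternatively (1) or (2); the theorem then yields all eight properties. The most direct route uses the result recalled at the start of Section 4 (\cite[Theorem 4.8, Theorem 5.8]{Fu2025}). It says that for a separable simple \CA\ of stable rank one, $\Gamma$ is hereditary surjective, and that hereditary surjectivity implies tracial approximate oscillation zero. So $A$ satisfies (2), and Theorem \ref{elmy11-10} gives $(2)\Rightarrow(3)\Rightarrow\cdots\Rightarrow(7)$ unconditionally. Because $A$ is exact, we also get $(7)\Rightarrow(8)\Rightarrow(1)$, and hence every property on the list.

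Before doing this, I will check the standing hypotheses of Theorem \ref{elmy11-10}.

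\emph{Algebraic simplicity and separability.} These are assumed.

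\emph{Non-elementary.} An elementary algebraically simple \CA\ is $\K$ or some $M_n$. For such algebras several properties fail trivially, for instance divisibility into $M_n$ for large $n$. So I would either read the corollary as implicitly excluding these, or note that $\K$ and $M_n$ have stable rank one and must be excluded. I will add a remark assuming $A$ is non-elementary, since the theorem requires it.

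\emph{Nonzero quasitraces, $\TQT(A)\neq\{0\}$.} This is the substantive point. I will argue that a stable rank one simple \CA\ is stably finite. By the Blackadar--Handelman/Haagerup theory, its Cuntz semigroup then admits a nonzero functional. Since $A$ is exact, this functional comes from a nonzero densely defined 2-quasitrace, which is in fact a trace. Algebraic simplicity then allows normalization, so that $\QT(A)\neq\emptyset$.

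As an alternative to citing \cite{Fu2025}, I could invoke the statement in the introduction that stable rank one implies that $l^\infty(A)/J_A$ has real rank zero. That is property (3) directly.

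The only step needing care is the existence of a nonzero quasitrace, together with the exclusion of elementary algebras. Everything else is a direct citation chained with Theorem \ref{elmy11-10}.
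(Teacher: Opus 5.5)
Your proposal is correct and is essentially the paper's proof. The paper quotes \cite[Theorem A]{Fu2025} for the implication stable rank one $\Rightarrow$ tracial approximate oscillation zero, which is the same implication you assemble from \cite[Theorem 4.8, Theorem 5.8]{Fu2025}, and then applies Theorem \ref{elmy11-10}, using exactness for $(7)\Rightarrow(8)\Rightarrow(1)$. Your hypothesis checks go beyond the paper, whose statement tacitly needs $A$ non-elementary and $\QT(A)\neq\emptyset$: you are right that $M_n$ must be excluded, but $\K$ is not algebraically simple, and in the non-unital case Blackadar--Handelman alone does not yet produce a nonzero quasitrace.
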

\begin{proof}
By \cite[Theorem A]{Fu2025},
$A$ has tracial approximate oscillation zero. Then apply Theorem \ref{elmy11-10}. 
\end{proof}

\begin{cor} 
The following \CAs\ have all the properties listed in Theorem \ref{elmy11-10}: 
(1) Simple diagonal AH-algebras. 
(2) $C(X)\rtimes \Z^d,$ where  $(X,\Z^d)$ is  a minimal free topological dynamical system.  
(3) $C(X)\rtimes \Gamma,$ where   $\Gamma$ is  a countable discrete amenable group,
and $(X,\Gamma)$ is a minimal free topological 
dynamical system with (URP) and (COS). 
\end{cor}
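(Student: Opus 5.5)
The plan is to reduce each of the three classes to Corollary \ref{elmy20-05}. That is, for each algebra we check that it is algebraically simple, separable, exact, and has stable rank one; the corollary then gives all the properties in Theorem \ref{elmy11-10}. Separability is clear in all three cases, because $X$ is a compact metrizable space and the groups are countable. Exactness is also clear: AH-algebras are nuclear, and crossed products of commutative \CAs\ by amenable groups are nuclear. Algebraic simplicity holds because every simple unital \CA\ is algebraically simple, and all three classes are simple and unital. For the crossed products, simplicity uses minimality together with freeness, and hence topological freeness.

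It remains to cite stable rank one for each class. For (1), we use that simple diagonal AH-algebras have stable rank one. This is due to Elliott--Ho--Toms for diagonal AH-algebras; if needed, one can instead argue that such algebras are tracially approximately diagonal, so that the Cuntz semigroup route of Thiel applies. For (2), stable rank one of $C(X)\rtimes\Z^d$ for minimal free $\Z^d$-actions is a known result, due to Alboiu--Lutley, building on the $\Z$ case of Putnam. For (3), stable rank one of $C(X)\rtimes\Gamma$ under (URP) and (COS) is due to Niu. One could alternatively deduce tracial approximate oscillation zero, or hereditary surjectivity of $\Gamma$, directly from these papers. Then Theorem \ref{elmy11-10} applies via property (2) instead of via Corollary \ref{elmy20-05}.

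I expect the main obstacle to be accurate attribution rather than any new argument. Each statement rests entirely on external stable rank one theorems, and the step most likely to need care is (1): we must make sure the cited result covers every simple diagonal AH-algebra, including Villadsen type algebras, without dimension growth hypotheses. If only a weaker statement is available there, the fallback is to verify tracial approximate oscillation zero directly. For that we would use the diagonal structure to produce projections or evenly divided elements, and then invoke Theorem \ref{elmy11-10} starting from (2).
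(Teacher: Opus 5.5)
Your proposal follows the same route as the paper: it cites stable rank one for each class and then applies Corollary~\ref{elmy20-05}, using \cite[Theorem 4.1]{EHT} for (1) and \cite[Corollary 7.9]{LN} and \cite[Theorem 7.8]{LN} for (2) and (3). The only correction is attribution. Both the $\Z^d$ case and the (URP)+(COS) case are due to Li--Niu, whereas Alboiu--Lutley treat $\Z$-actions only. Also, Elliott--Ho--Toms already covers every simple unital diagonal AH-algebra, including the Villadsen-type ones, with no dimension-growth hypothesis, so your fallback for (1) is not needed.
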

\begin{proof}
The listed $C^*$-algebras have stable rank one by 
\cite[Theorem 4.1]{EHT}, 
\cite[Corollary 7.9]{LN}, and 
\cite[Theorem 7.8]{LN}. 
To conclude, we apply Corollary \ref{elmy20-05}.
\end{proof}


\section{Applications}

\subsection{
Uniformly McDuff,  uniform property 
\texorpdfstring{$\Gamma$}{Gamma}, and CPoU}

\begin{df}
Let $A$ be a \CA, let $\rT(A)$ be the tracial state space of $A.$
Let $\omega$ be a free ultrafilter on $\N.$ 
Let $\{\tau_n\}_{n\in\N}\subset T(A).$
Define $\tau: l^\infty(A)\to\C$ by $\tau(\{a_n\}):=\lim_{n\to\omega}\tau_n(a_n).$ Such $\tau$ is called a limit trace on $l^\infty(A).$
Let ${\rm T}_\omega(A)$ be the set of all limit traces on $l^\infty(A).$
Let $J_{A,\omega}:=\{\{x_n\}\in l^\infty(A):\lim_\omega \|x_n\|_2=0\}.$
Let $A^\omega:=l^\infty(A)/J_{A,\omega}.$
\end{df}

Note that if $A$ is separable and ${\rm T}(A)$ is non-empty and compact, 
then $A^\omega$ is unital
(\cite[Proposition 1.11]{CETWW}). 

\begin{df}(\cite[Definition 4.2]{CETW-Gamma})
Let $A$ be a  separable \CA\ with ${\rm T}(A)$ non-empty and compact. 
$A$ is said to be uniformly McDuff 
if for all $n\in\N,$ there exists a unital embedding 
$h: M_n\to A^\omega\cap A'.$
\end{df} 
\begin{rem} 
Uniformly McDuff is one of the forms of 
``tracial approximate divisibility''. 
See also \cite[Definition 4.8, Theorem 4.11]{FLL21} for 
several equivalent versions of 
Property (TAD) that use Cuntz comparison, 
where TAD stands for  ``Tracial Approximate Divisibility''. 
Uniformly McDuff is exactly Property (TAD-3) when strict comparison is provided. 
\end{rem}

\begin{df}
(\cite{CETWW})
Let $A$ be a  separable \CA\ with ${\rm T}(A)$ non-empty and compact. 
$A$ is said to have uniform property $\Gamma$ 
if for all $n\in\N,$ there exist projections  
$p_1,...,p_n\in 
A^\omega \cap A'$ 
summing to $1_{A^\omega}$
such that 
for all $a\in A,$ all $i=1,...,n,$ and all $\tau\in {\rm T}_\omega(A),$
$
\tau(p_ia)= \tau(a)/n.
$

\end{df}

\begin{df}
\cite[Definition 3.1]{CETWW}
Let $A$ be a  separable \CA\ with ${\rm T}(A)$ non-empty and compact. 
$A$ is said to have 
complemented partitions of unity (CPoU) if for
every $\|\cdot\|_{2,{\rm T}_\omega(A)}$-separable subset 
$S$ of $A^\omega,$ 
every family $a_1,...,a_k\in (A^\omega)_+,$
and any scalar 
$
\dt>\sup_{\tau\in {\rm T}_\omega(A)}\min\{\tau(a_1),...,\tau(a_k)\},
$
there exist orthogonal projections 
$p_1,...,p_k\in A^\omega\cap S'$ such that 
$\sum_{i=1}^k p_i=1_{A^\omega}$ and 
$
\tau(a_ip_i)\le \dt\tau(p_i)$ 
for all $\tau\in {\rm T}_\omega(A)$ 
$(i=1,...,k).$

\end{df}

The following theorem has different proofs 
(but all need to use Theorem \ref{elmy11-10}); 
see Remark \ref{elmy19-05}. 
The following proof requires no extra tools.

\begin{thm}
Let $A$ be an algebraically simple separable nuclear \CA\ with ${\rm T}(A)$ non-empty and compact. 
Assume that one of the following conditions hold: 

(1) $A$ is uniformly McDuff.

(2) $A$ has uniform property $\Gamma.$

(3) $A$ has CPoU. 
\\
Then all the eight properties stated in Theorem \ref{elmy11-10} hold for $A.$ 

\end{thm}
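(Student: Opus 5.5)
Since $A$ is nuclear (hence exact), Theorem \ref{elmy11-10} makes all eight properties equivalent. So it suffices to verify one convenient property. I will target (6), tracial almost divisibility, or (8), weak tracial diagonal divisibility. The natural route is to show that each of (1)--(3) yields, for every $n$ and every $a\in\Ped(A\otimes\K)_+^1$, c.p.c.\ order zero maps $M_n\to\Her(a)$ that carry almost all of the trace of $a$.

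First I reduce (2) and (3) to (1), or treat all three uniformly. For (1), uniform McDuffness gives a unital embedding $h:M_n\to A^\omega\cap A'$. By projectivity of the cone $CM_n$ (\cite[Corollary 4.1]{WZ09OZ}), lift $h$ to a sequence of c.p.c.\ order zero maps $h_k:M_n\to A$. Fix $a\in A_+^1$ (the case $a\in M_r(A)$ is handled by amplification $h\otimes 1_r$). The map $x\mapsto h(x)a$ is order zero into $A^\omega$, because $h(M_n)$ commutes with $a$, and it satisfies $h(1_n)a=a$ in $A^\omega$. To land in $\Her(a)$, I use instead $x\mapsto a^{1/2}h(x)a^{1/2}$, which coincides with $h(x)a$ in $A^\omega$ and is therefore order zero there. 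Lifting it again gives order zero maps $\phi_k:M_n\to \Her(a)$ with $\|\phi_k(1_n)-a\|_{2}\to 0$ along $\omega$. Here I need that $\mathrm{Her}(a)$-valued lifts exist: the elements $a^{1/2}h_k(x)a^{1/2}$ lie in $\Her(a)$ and give an approximately order zero sequence, and a lift through $l^\infty(\Her(a))/J$ is available by projectivity. Hence $|\tau(\phi_k(1_n))-\tau(a)|\to 0$ uniformly on $\rT(A)$. Compactness of $\rT(A)$, together with the fact that all quasitraces are traces by exactness \cite{Haa}, makes this uniform on $\QTAW$ after the rescaling of Lemma \ref{elmy08-04}. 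This is (8) for elements of $A$. For general $e\in\Ped(A\otimes\K)_+^1$, note that $e$ lies in some $M_r(A)$ up to small norm, and repeat the argument for $M_r(A)$, which is again uniformly McDuff via $h\otimes 1_r$.

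For (2), the projections $p_1,\dots,p_n\in A^\omega\cap A'$ with $\tau(p_ia)=\tau(a)/n$ give an order zero map $M_n\to A^\omega$ only if the $p_i$ are equivalent, which they need not be. So I would instead aim for tracial almost divisibility (6) directly. Take $\phi$ on $\C$ (i.e.\ $n$ replaced by the partition) \emph{no}; this doesn't give matrices. The better course is to appeal to the known results that, for separable nuclear $A$ with compact $\rT(A)$, uniform property $\Gamma$ implies CPoU \cite{CETWW}, and that CPoU plus $\Gamma$ gives uniform McDuffness \cite{CETW-Gamma}. This is the main obstacle. If citing is allowed, (2) and (3) reduce to (1). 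If not, I would use CPoU to patch together local matrix embeddings of the fibres $\pi_\tau(A)''$, which are McDuff II$_1$ factors (or type I, excluded by $\Gamma$/non-elementarity), into a unital $M_n\to A^\omega\cap A'$, following the standard CPoU gluing argument.

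Finally, having obtained (8) in every case, Theorem \ref{elmy11-10} (with $A$ exact) gives all eight properties.
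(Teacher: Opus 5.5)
Your proposal is correct and follows the paper's proof. For (1) you lift $x\mapsto a^{1/2}h(x)a^{1/2}$, which is the paper's $x\mapsto\phi(x)a$ in the ultrapower, to c.p.c.\ order zero maps into $\Her(a)$ with $\|\phi_k(1_n)-a\|_2\to 0$; the paper reads this off directly as tracial diagonal divisibility (4) rather than passing to (8), and aiming at (6), which only involves $M_m(A)$, would let you drop your sketchy ``$e$ lies in some $M_r(A)$ up to small norm'' step, where the maps must land in $\Her(e)$ rather than in $\Her(e')$ for a nearby $e'$. For (3), your stated chain (uniform $\Gamma\Rightarrow$ CPoU, then CPoU plus $\Gamma\Rightarrow$ uniformly McDuff) does not cover CPoU alone, so cite \cite[Theorem 4.6]{CETW-Gamma} directly as the paper does, bearing in mind that this reduction, like Theorem \ref{elmy11-10} itself, tacitly needs $A$ non-elementary, since $M_k$ trivially has CPoU but is not tracially almost divisible.
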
 

\begin{proof} 
(1) Assume that $A$ is uniformly McDuff. 
Let $n,m\in\N,$ $a\in M_m(A)_+^1,$ 
and $\ep>0.$
Let $B:=M_m(A).$ 
Since central sequence algebra is stable under taking matrices, 
$B$ is also uniformly McDuff. 
Then there is a unital embedding 
$\phi: M_n\to B^\omega\cap B'.$ 
Let $\pi: l^\infty(B)\to B^\omega$ be the quotient map. 
Since $\phi(M_n)$ commutes with $a,$ 
the map $\psi: M_n\to \pi(l^\infty(\Her_B(a))),$
$x\mapsto \phi(x)a$ is c.p.c.~order zero
with 
\beq\label{elmy15-05}
\psi(1_n)=a.
\eneq
Let $\bar \psi:M_n\to l^\infty(\Her_B(a))$ 
be a c.p.c.~order zero lifting of $\psi$
and let $\psi_i: M_n\to \Her_B(a)$ $(i\in\N)$ 
be the components of $\bar\psi.$ 
Then \eqref{elmy15-05} shows that 
there is $i_0\in\N$ such that $\|\psi_{i_0}(1_n)-a\|_{2}<\ep.$
Hence $A$ is tracially diagonally divisible,
and thus the eight properties in  Theorem \ref{elmy11-10} 
hold for $A.$

Note that 
(1), (2), and (3) are equivalent for the given $A$
by \cite[Theorem 4.6]{CETW-Gamma}. 
It follows that 
the rest of the theorem also holds.
\end{proof}

\begin{rem}\label{elmy19-05} 
We note that there are different approaches to 
proving the above theorem. 
For instance, \cite[Proposition 7.2]{TraCom} 
can be used to deduce the above theorem. 
Indeed, we have  $A^\omega=(\ol{A}^{T(A)})^\omega,$ where 
$\ol{A}^{T(A)}$ is the uniform tracial completion of $A$
and is factorial tracially complete. 
Applying \cite[Proposition 7.2]{TraCom}, 
we obtain that $A^\omega=(\ol{A}^{T(A)})^\omega$ has real rank zero.
Then by Theorem \ref{elmy11-10}, 
$A$ also satisfies all the eight properties stated therein.

Recently, Elliott-Niu (\cite{EN}) proved that 
for a unital \CA\ $B$, 
uniform property $\Gamma$ implies Property (S),
where Property (S) is defined for unital \CAs, 
and $B$ having Property (S) is equivalent to $l^\infty(B)/J_B$ having  real rank zero. 
Then it follows from Theorem \ref{elmy11-10} that 
$B$ also satisfies all the eight properties listed there.

\end{rem}



\begin{df}
{\rm (\cite[Definition 4.1 (iv)]{W12pure})}
Let $A$ be a \CA. $A$ is said to have locally finite nuclear dimension, 
if for all finite subset $F,$ all $\ep>0,$
there is a $C^*$-subalgebra $B\subset A$ such that
$B$ has finite nuclear dimension and $F\subset_\ep B.$
\end{df}
The following result removes the unital condition in  Vaccaro's result (\cite{Vacc}).  


\begin{thm}\label{elmy14-06}
{\rm (cf.~\cite[Theorem B]{Vacc})}
Let $A$ be an algebraically simple separable 
non-elementary \CA\ with ${\rm T}(A)$ non-empty and compact
and with stable rank one. 
If $A$ has locally finite nuclear dimension, 
then the following hold: 

(1) $A$ is uniformly McDuff. 

(2) $A$ has uniform property $\Gamma.$

(3) $A$ has CPoU. 

\end{thm}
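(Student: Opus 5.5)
The plan is to obtain one of the three properties directly and then get the other two from the equivalence \cite[Theorem 4.6]{CETW-Gamma}, already used in the preceding theorem. That equivalence requires nuclearity, and we only have locally finite nuclear dimension. So I would rather prove uniform property $\Gamma$ (or McDuff) by hand, and get CPoU from the route that only needs local structure, namely Vaccaro's argument \cite{Vacc} adapted to the non-unital setting.

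\textbf{Step 1.} Put $A$ into the framework of Theorem \ref{elmy11-10}. Stable rank one and algebraic simplicity give tracial approximate oscillation zero by \cite[Theorem A]{Fu2025}. Hence, by Theorem \ref{elmy11-10} (directions $(2)\Rightarrow(3)$ and $(3)\Rightarrow(4)$, which need no exactness):
\begin{itemize}
\item $l^\infty(A)/J_A$ has real rank zero;
\item $A$ is tracially diagonally divisible.
\end{itemize}
Since $\rT(A)$ is compact and nonempty and $A$ is simple, $\ol{\QT(A)}^w$ is comparable to $\rT(A)$ up to scaling constants, in the same way as Lemma \ref{elmy08-04}. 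In particular $J_A=J_{A,\omega}$ along ultrafilters, and $A^\omega$ is unital. I also need $\QT=\rT$; locally finite nuclear dimension gives local exactness, which suffices to apply Haagerup's theorem.

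\textbf{Step 2 (uniformly McDuff).} Fix $n$, a finite set $F\subset A$, and $\ep>0$. Choose a subalgebra $B\subset A$ of finite nuclear dimension with $F\subset_\ep B$. Tracial $0$-almost divisibility, which is item (6) of Theorem \ref{elmy11-10} and follows from Step 1, feeds Tikuisis' Theorem \ref{elmy13}. This yields order zero maps $\psi_i:M_n\to A$ that are asymptotically central relative to $B$ and satisfy $\tau(b-\psi_i(1_n)b)\to 0$ uniformly.

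Taking a diagonal sequence over an increasing exhausting family of such $B$'s (using separability), I obtain an order zero map $\Psi:M_n\to A^\omega\cap A'$ with $\tau(\Psi(1_n)a)=\tau(a)$ for all limit traces $\tau$. It remains to upgrade $\Psi$ to a unital embedding:
\begin{itemize}
\item Because $1-\Psi(1_n)\ge 0$ and $\tau(1-\Psi(1_n))\to 0$ for all limit traces (using an approximate unit and compactness of $\rT(A)$), we get $\Psi(1_n)=1$ in $A^\omega$.
\item A c.p.c.\ order zero map whose unit image is a projection is a $*$-homomorphism by \cite{WZ09OZ}, so $\Psi$ is a unital embedding.
\end{itemize}

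\textbf{Step 3 (Property $\Gamma$ and CPoU).} Uniform property $\Gamma$ follows from uniform McDuff: take $p_i=\Psi(e_{i,i})$. Uniform tracial centrality gives $\tau(p_ia)=\tau(a)/n$, because $\tau(\Psi(e_{ii})a)$ is independent of $i$ (the $\Psi(e_{ij})$ commute with $a$ and $\tau$ is a trace), and the $p_i$ sum to $1$.

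For CPoU, I would follow Vaccaro and \cite{CETWW}. The ingredients are:
\begin{itemize}
\item local finite nuclear dimension approximations of $S$;
\item uniform $\Gamma$;
\item real rank zero of $l^\infty(A)/J_A$ from Step 1, which allows taking spectral projections in relative commutants.
\end{itemize}

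\textbf{Main obstacle.} The hardest step is CPoU. Without global nuclearity, the CETWW argument must be run locally on each subalgebra $B$ of finite nuclear dimension and then reindexed via Kirchberg's $\ep$-test. Real rank zero of $A^\omega\cap S'$, which is not directly given, is where I expect the most work. I would first try to deduce it by applying Theorem \ref{elmy11-10} to a suitable relative setting.
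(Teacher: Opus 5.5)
Your proof of (1) follows the paper's route. You get tracial approximate oscillation zero from \cite[Theorem A]{Fu2025}, hence tracial almost divisibility, and feed this into Tikuisis' Theorem \ref{elmy13} on an exhausting sequence of finite-nuclear-dimension subalgebras. Then you check that $\Psi(1_n)$ is the unit. The paper does this by showing $\phi(1_k)$ fixes the unit $\pi(\{f_{1/n}(e)\})$; your positivity-plus-trace argument works equally well. One small correction: $J_A\subseteq J_{A,\omega}$, not equality, but nothing depends on this. Your derivation of (2) from (1) via $p_i=\Psi(e_{i,i})$ is correct, and more elementary than the paper, which obtains (2) by citing \cite[Theorem 4.6]{CETW-Gamma}.

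The gap is (3): you never prove CPoU. You sketch a localized CETWW argument, but you leave its key ingredients open: real rank zero of $A^\omega\cap S'$ and the Kirchberg $\ep$-test reindexing. The reason you give for this detour is also mistaken, because locally finite nuclear dimension implies nuclearity. Each approximating $B$ of finite nuclear dimension is nuclear. If $F\subset_\ep B$, take a completely positive approximation $B\to M_k\to B$ and extend the first map to $A$ by Arveson's theorem; this gives the completely positive approximation property for $A$ on $F$. You already need this kind of local argument to get exactness of $A$, both for Haagerup's theorem and for Theorem \ref{elmy13}, so denying nuclearity is inconsistent with your own Step 1. Hence $A$ is a separable nuclear, simple, non-elementary \CA\ with $\rT(A)$ compact. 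Then \cite[Theorem 4.6]{CETW-Gamma} applies directly and turns uniform McDuffness into CPoU (and uniform property $\Gamma$). This is exactly how the paper finishes once (1) is established, so your Step 3 program for CPoU is unnecessary.
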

\begin{proof} 
(1): Let $\pi: l^\infty(A)\to l^\infty(A)/J_A$ be the quotient map. 
Let $e\in A_+^1$ be a strictly positive element of $A.$ 
Define $h_n: {\rm T}(A)\to [0,1],$  $\tau\mapsto \tau(f_{1/n}(e))$
be an increasing sequence of continuous functions on ${\rm T}(A).$
Since ${\rm T}(A)$ is compact
and $\lim_n\tau(f_{1/n}(e))=1$ for all $\tau\in {\rm T}(A),$
by Dini's theorem, 
$\{h_n\}$ converges to $1$ on ${\rm T}(A)$ uniformly. 
It follows that $\pi(\{f_{1/n}(e)\}_{n\in\N})$ is the unit of 
$(l^\infty(A)/J_A)\cap A'$ (see \cite[Lemma 1.10, Proposition 1.11]{CETWW}). 

Fix $k\in\N.$ 
Let $F_1\subset F_2\subset..$ 
be a an increasing sequence of finite subsets in the unit ball of $A_+^1$ such that $\cup_{m\in\N} F_m$ is dense in $A_+^1.$

Since $A$ has locally finite nuclear dimension, 
for each $n\in \N,$ 
there is a finite nuclear dimension $C^*$-subalgebra $B_n\subset A$ 
and a finite subset $Y_n\subset B^1_{n,+}$ 
such that  $F_n\cup\{f_{1/n}(e)\}\subset_{1/n^2} Y_n.$ 
In particular, let $y_n\in Y_n$ such that 
\beq\label{elmy14-01}
\|f_{1/n}(e)-y_n\|\le 1/n^2. 
\eneq
By  Theorem \ref{elmy13},
for each $n\in\N,$ 
there is a c.p.c.~order zero map 
$\phi_n: M_k\to A$ such that 

\quad (i) $\|\phi_n(x)y-y\phi_n(x)\|<1/n^2$ for all $x\in M_k^1$ and all $y\in Y_n,$ and

\quad (ii) $\sup\{|\tau(y_n-\phi_n(1_k)y_n)|:\tau\in {\rm T}(A)\}<1/n^2.$
\ \\
By (ii), we have 
\beq\label{elmy14-03}
\|y_n-\phi_n(1_k)y_n\|_2\le 
\sup\{|\tau(y_n-\phi_n(1_k)y_n)|:\tau\in {\rm T}(A)\}^{1/2}<1/n. 
\eneq
Let $\phi: M_k\to l^\infty(A)/J_A$ be the map induced by $\{\phi_n\}_{n\in\N}.$ 
By \eqref{elmy14-01} and  \eqref{elmy14-03}, 
\beq
\phi(1_k)\cdot \pi(\{f_{1/n}(e)\}
=\pi(\{\phi_n(1_k) f_{1/n}(e)\})
=\pi(\{\phi_n(1_k)y_n\})=\pi(\{y_n\})=\pi(\{f_{1/n}(e)\}).
\nonumber\eneq
Then since $\pi(\{f_{1/n}(e)\})$ is the unit of $(l^\infty(A)/J_A)\cap A',$
$\phi(1_k)$ is as well.
Moreover, by (i), 
$\phi(M_k)
\subset 
(l^\infty(A)/J_A)\cap A'.
$ 
Hence $A$ is uniformly McDuff. 

(2) and (3): Since $A$ is uniformly McDuff, 
it follows from 
\cite[Theorem 4.6]{CETW-Gamma}
that $A$ has uniform property $\Gamma$ and CPoU. 
\end{proof}

\subsection{Tracial completion} 


Tracially complete \CA\ is a powerful tool that 
has been systematically  investigated in \cite{TraCom}.
We give here an application to tracial completion. 
The next theorem requires a number of concepts and definitions that are beyond the scope of the present paper to list; 
we therefore content ourselves with providing 
the reader with precise references as follows.


1. Tracially complete \CA\ 
(\cite[Definition 3.4]{TraCom}). 

2. Type ${\rm II_1}$ tracially complete \CA\ 
(\cite[Definition 3.8]{TraCom}).

3. Factorial tracially complete \CA\ 
(\cite[Definition 3.13]{TraCom}).

4. Tracial completion (\cite[Definition 3.19]{TraCom}).

5. The hyperfinite model $({\cal R}_{X},X)$ for the classifiable tracially complete $C^*$-algebras corresponding to 
a given metrizable Choquet simplex $X$ 
(\cite[Example 3.35]{TraCom}). 

6. 
Amenable tracially complete \CA\ 
(\cite[Definition 4.1]{TraCom}). 

7. 
McDuff tracially complete \CA\ 
(\cite[Definition 5.12]{TraCom}). 

8.
Property $\Gamma$ for factorial tracially complete \CA\ 
(\cite[Definition 5.19]{TraCom}). 

9.
CPoU for factorial tracially complete \CA\ 
(\cite[Definition 6.1]{TraCom}). 

10. 
Hyperfinite type ${\rm II}_1$ tracially complete \CA\ 
(\cite[Definition 8.1]{TraCom}).

11. Pure $C^*$-algebra (\cite[Definition 3.6]{W12pure}).


\begin{df}
(\cite[3.19]{TraCom})
Let $A$ be a \CA\ with ${\rm T}(A)$ non-empty. 
The uniform tracial
completion of $A$ with respect to $\rT(A)$ is 
the following 
$C^*$-subalgebra of $l^\infty(A)/J_{A}:$ 
\beq
\ol{A}^{\rT(A)}:=
\{\{a_n\}_{n\in\N}\in l^\infty(A): \{a_n\}_{n\in\N}\text{ is $\|\cdot\|_{2}$-Cauchy}\}/J_A.
\eneq 
\end{df} 

\begin{nota}
Let $A$ be a \CA\ with ${\rm T}(A)$ non-empty and compact.
Let $\wtd{X}$ be the set of all traces on $\ol A^{\rT(A)}$
that induced by traces in $\rT(A).$ 
By \cite[Proposition 3.23 (iii), (iv)]{TraCom}, 
$\left(\ol A^{\rT(A)}, \wtd X\right)$ is 
a factorial  tracially complete \CA. 
By \cite[Proposition 3.23 (ii)]{TraCom}, $\rT(A)$ is affinely homeomorphic to $\wtd X.$ 
Hence, following the convention of \cite[Notation 3.24]{TraCom}, 
we will use the notation $\left(\ol A^{\rT(A)}, \rT(A)\right)$ 
hereafter 
to refer to the  tracial completion of $A$ with respect to $\rT(A).$
\end{nota}


\begin{thm} \label{elmy14-08}
Let $A$ be an algebraically simple separable 
non-elementary 
stable rank one \CA\ with ${\rm T}(A)$ non-empty and compact. 
If $A$ has  locally finite nuclear dimension,
then 
the tracial completion 
$\left(\ol A^{\rT(A)}, \rT(A)\right)$ 
is a hyperfinite type ${\rm II_1}$ 
factorial tracially complete $C^*$-algebra
that isomorphic to the model $({\cal R}_{\rT(A)},\rT(A)).$ 
Moreover, 
$\left(\ol A^{\rT(A)}, \rT(A)\right)$ 
is amenable,  McDuff, 
has property $\Gamma$ 
and CPoU. 
Furthermore, 
$\ol{A}^{{\rm T}(A)}$ 
is pure, 
and has real rank zero and stable rank one,
and satisfies $\rT\left(\ol A^{\rT(A)}\right)= \rT(A).$

\end{thm}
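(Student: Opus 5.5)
Since $A$ is algebraically simple, separable, non-elementary, has stable rank one, compact non-empty $\rT(A)$ and locally finite nuclear dimension, I would first apply Theorem \ref{elmy14-06} to get that $A$ is uniformly McDuff, has uniform property $\Gamma$ and CPoU. The rest of the argument transfers these properties to the tracial completion $\ol A^{\rT(A)}$ and then invokes the classification and structure results of \cite{TraCom}.

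By \cite[Proposition 3.23]{TraCom}, $(\ol A^{\rT(A)},\rT(A))$ is a factorial tracially complete \CA. Its ultrapower satisfies $(\ol A^{\rT(A)})^\omega = A^\omega$, and its central sequence algebra contains the image of $A^\omega\cap A'$. This is because $A$ is $\|\cdot\|_2$-dense in $\ol A^{\rT(A)}$, and commuting with $A$ in $2$-norm passes to the closure. The unit of $A^\omega$ is the unit of the completion's ultrapower. Consequently:
\begin{enumerate}
\item uniform McDuffness of $A$ gives McDuffness of $(\ol A^{\rT(A)},\rT(A))$ in the sense of \cite[5.12]{TraCom};
\item uniform property $\Gamma$ of $A$ gives property $\Gamma$ in the sense of \cite[5.19]{TraCom};
\item CPoU of $A$ gives CPoU in the sense of \cite[6.1]{TraCom}.
\end{enumerate}
Amenability of the completion follows because $A$ has locally finite nuclear dimension, so the completion is $2$-norm approximated by nuclear subalgebras. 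One then checks the local approximation condition in \cite[Definition 4.1]{TraCom} directly, or uses the statement there that tracial completions of nuclear (locally nuclear) algebras are amenable. The type ${\rm II_1}$ property follows from McDuffness, since unital matrix embeddings of every size exclude finite-dimensional quotient representations in each trace.

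Next I would apply the classification theorem of \cite{TraCom}: an amenable factorial tracially complete \CA\ with CPoU that is McDuff is hyperfinite, type ${\rm II_1}$, and isomorphic to $({\cal R}_X,X)$. Here $X=\rT(A)$, which is a metrizable Choquet simplex because $A$ is separable and $\rT(A)$ is compact. This gives the isomorphism with $({\cal R}_{\rT(A)},\rT(A))$.

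For the remaining structural claims I would use the corresponding results of \cite{TraCom} for factorial tracially complete algebras with CPoU. These give real rank zero and stable rank one, as the analogues of the von Neumann facts via CPoU. They also give $\rT(\ol A^{\rT(A)})=\rT(A)$, meaning every trace on the completion is $2$-norm continuous, a consequence of CPoU and property $\Gamma$, i.e.\ a uniform Dixmier-type property. Purity would follow because, with real rank zero, stable rank one and trace space $\rT(A)$, Cuntz comparison is determined by traces of projections: strict comparison comes from comparison of projections under CPoU, and almost divisibility comes from McDuffness.

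The main obstacle is the bookkeeping in matching the hypotheses of the results of \cite{TraCom}. The hardest part is ensuring that the tracial identity $\rT(\ol A^{\rT(A)})=\rT(A)$ and purity are actually available there, or follow readily. If they are not, I would derive purity by hand. Strict comparison comes from real rank zero and CPoU comparing spectral projections in the model ${\cal R}_X$. Divisibility comes from unital matrix embeddings produced by McDuffness.
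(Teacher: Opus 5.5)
Your treatment of the first half of the theorem (amenability, McDuffness, type ${\rm II_1}$, hyperfiniteness, property $\Gamma$, CPoU, and the isomorphism with $({\cal R}_{\rT(A)},\rT(A))$) essentially follows the paper's route. The paper gets amenability from \cite[Corollary 4.10]{TraCom}, and McDuffness of the completion from uniform McDuffness (Theorem \ref{elmy14-06}) via \cite[Proposition 5.13]{TraCom}. It derives type ${\rm II_1}$ from McDuffness as you do. It then gets hyperfiniteness, $\Gamma$, CPoU and the identification with the model from \cite[Theorem 9.15]{TraCom}. Transferring $\Gamma$ and CPoU directly from $A$ is harmless but unnecessary. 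You should record that $\ol A^{\rT(A)}$ is $\|\cdot\|_{2}$-separable, which the classification requires.

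The genuine gap is the last sentence of the statement: purity, real rank zero, stable rank one, and $\rT(\ol A^{\rT(A)})=\rT(A)$. The paper does not extract these from \cite{TraCom}. It invokes the separate recent results \cite[Theorems A, B, C, D]{ET26} of Evington--Tikuisis, which address exactly these norm-level questions. Your proposal neither cites such results nor supplies arguments that could replace them, and this is not bookkeeping.

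(i) In a von Neumann algebra, real rank zero and stable rank one come from Borel functional calculus and polar decomposition, which $\ol A^{\rT(A)}$ lacks. CPoU-type arguments yield approximations in $\|\cdot\|_{2,\rT(A)}$, i.e.\ statements about the reduced power $A^\omega=(\ol A^{\rT(A)})^\omega$. This is how real rank zero of $A^\omega$ is obtained in Remark \ref{elmy19-05}. It is not operator-norm density of invertibles in $\ol A^{\rT(A)}$ itself.

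(ii) That every tracial state of $\ol A^{\rT(A)}$ is $\|\cdot\|_2$-continuous is a substantive theorem. Calling it a ``uniform Dixmier-type property'' does not prove it. An averaging argument that only produces $\|\cdot\|_2$-approximations cannot control a trace not already known to be $\|\cdot\|_2$-continuous.

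(iii) Your purity sketch depends on these unproved claims. Strict comparison must be checked against every functional on $\Cu(\ol A^{\rT(A)})$, i.e.\ every lower semicontinuous 2-quasitrace on $\ol A^{\rT(A)}\otimes\K$. So it presupposes (ii) and a quasitrace analogue. Moreover, turning comparison of spectral projections and McDuff matrix embeddings into Cuntz subequivalences in operator norm again needs the norm-level results of (i).

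As written, the final sentence of the theorem is unproved; the missing ingredient is \cite{ET26}.
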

\begin{proof}
{{By \cite[Corollary 4.10]{TraCom}, 
$\left(\ol A^{\rT(A)}, \rT(A)\right)$ 
is an amenable factorial tracially complete \CA.}} 
By Theorem \ref{elmy14-06}, 
$A$ is uniformly McDuff. 
By \cite[Proposition 5.13]{TraCom}, 
{{$\left(\ol A^{\rT(A)}, \rT(A)\right)$}} is  McDuff. 
Then for all $\tau\in\rT(A),$ 
the weak closure $\pi_\tau\left(\ol A^{\rT(A)}\right)''$  of 
$\ol A^{\rT(A)}$ in the GNS representation with respect to $\tau$ 
has no type ${\rm I}$ part. 
Hence by \cite[Definition 3.8]{TraCom},  $\left(\ol A^{\rT(A)}, \rT(A)\right)$ is type ${\rm II_1}.$
%
By \cite[Theorem 9.15 (iv)]{TraCom}, 
$\left(\ol A^{\rT(A)}, \rT(A)\right)$ 
is hyperfinite, 
has property $\Gamma$ 
and CPoU.
{{
Since $A$ is $\|\cdot\|$-separable, 
$\ol A^{\rT(A)}$ is $\|\cdot\|_{2,\rT(A)}$-separable. 
By \cite[Theorem 9.15]{TraCom}, 
$\left(\ol A^{\rT(A)}, \rT(A)\right)$ 
is  isomorphic to the model $({\cal R}_{\rT(A)},\rT(A))$ 
that constructed in \cite[Example 3.35]{TraCom}.}} 
The rest of the theorem is a direct application of 
Evington-Tikuisis' recent results \cite[Theorem A, B, C, D]{ET26}.
\end{proof}

\begin{cor}\label{elmy15-03}
Let $A$ 
be an algebraically simple separable  stable rank one 
\CA\ with ${\rm T}(A)$ non-empty and compact
and with locally finite nuclear dimension. 
Then $A$ has the following form of tracial strict comparison: 
For every $a,b\in A_+,$ if $d_\tau(a)<d_\tau(b)$ for all $\tau\in\rT(A),$
then there is a sequence $\{r_n\}\subset A$ such that 
$\lim_n\|a-r_n^*br_n\|_2=0.$
\end{cor}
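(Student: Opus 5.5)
We transfer the comparison problem to the uniform tracial completion $\ol A^{\rT(A)}$, where Theorem \ref{elmy14-08} gives real rank zero, stable rank one and a hyperfinite type ${\rm II_1}$ structure. We solve it there, where comparison is governed entirely by traces, and then lift the resulting element back to a sequence in $A$.

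\textbf{Step 1: pass to $M:=\ol A^{\rT(A)}$.} Regard $a,b$ as elements of $M$ via the canonical map $A\to M$, given by constant sequences. By Theorem \ref{elmy14-08}, $\rT(M)=\rT(A)$, so $d_\tau(a)<d_\tau(b)$ holds for all $\tau\in\rT(M)$. We also need to know that $d_\tau$ computed in $M$ agrees with $d_\tau$ computed in $A$. Since $\tau(f_{1/n}(a))$ is the same in both algebras and $d_\tau=\lim_n\tau(f_{1/n}(\cdot))$, this is immediate.

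\textbf{Step 2: strict comparison in $M$.} By Theorem \ref{elmy14-08}, $M$ is pure, so $\Cu(M)$ has strict comparison with respect to its (quasi)traces. Here $\rT(M)=\rT(A)$, and $M$ is unital with compact trace space. To apply strict comparison we need a strict inequality, and compactness supplies a uniform gap. Concretely:
\begin{itemize}
\item pick $\ep>0$ so that $d_\tau((a-\ep)_+)\le d_\tau(a)<d_\tau(b)$ for all $\tau$;
\item apply strict comparison to obtain $(a-\ep)_+\lesssim b$ in $M$.
\end{itemize}
The main subtlety is purity, which is stated for $\Cu$ with the functionals on $\Cu(M)$. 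These functionals must be exactly the $d_\tau$ for $\tau\in\rT(M)$, including quasitraces. I will use that $M$ is nuclear or exact, or argue via Haagerup \cite{Haa} applied to the exact tracially complete algebra. I would first try citing \cite{ET26} directly, since it likely states strict comparison for such completions. The conclusion of this step is that there is $x\in M$ with $\|(a-\ep)_+-x^*bx\|<\ep$.

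\textbf{Step 3: lift back to $A$.} Write $x=\pi(\{x_k\})$ for a $\|\cdot\|_2$-Cauchy bounded sequence $\{x_k\}\subset A$. In $M$, norm closeness gives
\[
\|(a-\ep)_+-x^*bx\|_{2}\le\|(a-\ep)_+-x^*bx\|<\ep .
\]
This is a statement about the quotient by $J_A$. Unpacking it, since the $2$-seminorm on $M$ is the limit of $\|\cdot\|_2$ along the representing sequence, we get
\[
\limsup_k\|(a-\ep)_+-x_k^*bx_k\|_2\le\ep .
\]
In addition, $\|a-(a-\ep)_+\|\le\ep$. Choosing $\ep=1/n$ and a suitable index $k_n$, we set $r_n:=x_{k_n}$ and obtain $\|a-r_n^*br_n\|_2<3/n$, which proves the corollary.

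\textbf{Main obstacle.} The key difficulty is Step 2. It requires identifying the strict comparison provided by purity of $M$ with comparison measured by $\rT(A)$. This identification needs two things: every 2-quasitrace and every lower semicontinuous dimension functional on $M$ must come from $\rT(A)$, and the uniform gap argument must hold. Both should follow from $\rT(M)=\rT(A)$ together with compactness and exactness, but this is where care is needed.
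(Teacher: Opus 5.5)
Your Steps 1 and 3 are fine; Step 3 is essentially the paper's final $2$-norm approximation. Step 2, however, has a genuine gap.

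\textbf{The gap in Step 2.} Purity of $M:=\ol A^{\rT(A)}$ gives you almost unperforation of $\Cu(M)$. Turning that into ``$d_\tau(x)<d_\tau(y)$ for all $\tau\in\rT(M)$ implies $x\lesssim y$'' requires control of \emph{every} functional on $\Cu(M)$. That means all lower semicontinuous $2$-quasitraces on $M\otimes\K$, including unbounded and $\infty$-valued ones, on an algebra that is not simple. The identity $\rT(M)=\rT(A)$ only speaks about tracial states.

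Your proposed remedies do not close this:
\begin{itemize}
\item $M$ is in general neither nuclear nor exact. If $A$ has a unique trace, $M$ is the hyperfinite ${\rm II}_1$ factor, which is not exact as a $C^*$-algebra. So Haagerup's theorem is unavailable.
\item Citing \cite{ET26} for a strict comparison statement is a guess.
\item Your ``uniform gap'' bullet is vacuous. The inequality $d_\tau((a-\ep)_+)\le d_\tau(a)<d_\tau(b)$ holds for every $\ep$ and produces no gap at all.
\end{itemize}

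\textbf{The missing idea (the paper's route).} Never look at functionals on $\Cu(M)$. Do the trace analysis in $\Cu(A)$ instead. There $A$ is simple and nuclear (locally finite nuclear dimension), and algebraically simple with compact $\rT(A)$, so the functionals are just the $d_\tau$ with $\tau\in\R_+\rT(A)$, plus the trivial infinite one. Derive an algebraic inequality $(k+1)[\,\cdot\,]\le k[b]$ in $\Cu(A)$. Push it into $\Cu(M)$ along the canonical map $A\to M$. Then cancel using almost unperforation.

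\textbf{Why your $(a-\ep)_+$ reduction is actually needed.} The paper applies this to $a$ itself. But $(k+1)[a]\le k[b]$ forces $d_\tau(a)\le\tfrac{k}{k+1}d_\tau(b)$, and this can fail. For example, take $b=1$ and $a$ with $d_\tau(a)<1$ for all $\tau$ but $\sup_\tau d_\tau(a)=1$; such $a$ exist for suitable $\rT(A)$ by surjectivity of $\Gamma$. Passing to $(a-\ep)_+$ repairs this, provided the gap comes from compactness rather than from the bullet you wrote:
\begin{itemize}
\item First, $d_\tau((a-\ep)_+)\le\tau(f_{\ep/2}(a))\le d_\tau(a)<d_\tau(b)=\sup_n\tau(f_{1/n}(b))$.
\item The middle term is continuous on the compact set $\rT(A)$. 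A Dini argument therefore gives $n$ and $\gamma<1$ with $\tau(f_{\ep/2}(a))\le\gamma\,\tau(f_{1/n}(b))$ for all $\tau\in\rT(A)$.
\item This multiplicative gap yields $(k+1)[(a-2\ep)_+]\le k[b]$ in $\Cu(A)$ for some $k$.
\item Hence $(a-2\ep)_+\lesssim b$ in $M$ by almost unperforation, and your Step 3 then finishes the proof.
\end{itemize}
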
 
\begin{proof}

let $a,b\in A_+$ such that 
$d_\tau(a)<d_\tau(b)$ for all $\tau\in\rT(A).$
Then there is $k\in\N$ such that $(k+1)[a]\le k[b]$ in $\Cu(A).$
Hence $(k+1)[a]\le k[b]$ in $\Cu(\ol A^{\rT(A)}).$
Since $\ol A^{\rT(A)}$ is pure (Theorem \ref{elmy14-08}), 
we have $a\lesssim b$ in $\ol A^{\rT(A)}.$ 
Then there is a sequence $\{s_n\}\subset \ol A^{\rT(A)}$ such that 
$\lim_n\|a-s_n^*bs_n\|=0.$
Since $A$ is $\|\cdot\|_{2}$-dense in $\ol A^{\rT(A)},$ 
we can approximate each $s_n$ by some $r_n\in A$ (in 2-norm)
such that $\|s_n^*bs_n-r_n^*br_n\|_2<1/n.$ 
Then we have 
$\lim_n\|a-r_n^*br_n\|_2=0.$ 
\end{proof}

In \cite{V97} 
J.~Villadsen constructed  simple stable rank one AH-algebras that 
fail to have strict comparison. 
We refer to such constructions as Villadsen algebras of the first type. 
Above corollary asserts that Villadsen algebras of the first type
have tracial strict comparison. 


\appendix 
\renewcommand{\theequation}{e\ \thesection.\arabic{equation}}  
\setcounter{equation}{0}

\section{Appendix} 

This Appendix is basically independent of the other parts of this paper. 

Firstly, we record here two known facts from \cite{FLosc} with a bit of explanation. 

The following lemma, which  is essentially a combination of 
\cite[Lemma 7.4]{FLosc} and \cite[Lemma 7.6]{FLosc}, 
is reminiscent of S.~Zhang's result on approximately halving projections in simple real rank zero algebras. 

\begin{lem}\label{elmy05-lem1}
Let $A$ be a simple non-elementary \CA\ with $\QT(A)\neq \emptyset.$
Assume that $A$ has tracial approximate oscillation zero. 
Let $p\in l^\infty(A)/J_A$ be a projection. 
Let $n\in\N$ and let $\dt>0.$ 

(1) There is a full projection $e=\pi(\{e_i\})\in l^\infty(A)/J_A$
with $\{e_i\}\subset l^\infty(A)_+$ and $\sup\{d_\tau(e_i):\tau\in\QT(A)\}<\dt.$

(2) There are mutually orthogonal 
projections $p_1,...,p_{2^n}, p_{2^n+1}\in l^\infty(A)/J_A$ 
such that 
$p=\sum_{i=1}^{2^n+1}p_i$
with $p_1,...,p_{2^n}$ being mutually equivalent, 
and $p_{2^n+1}\lesssim e.$ 
\end{lem}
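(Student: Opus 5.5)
For (1), I will work inside $C:=l^\infty(A)/J_A$, which has real rank zero because $A$ has tracial approximate oscillation zero (\cite[Theorem 6.4]{FLosc}). First I pick a nonzero $a\in\Ped(A)_+^1$ with small tracial size: since $A$ is simple and non-elementary, there are nonzero $a$ with $\sup\{d_\tau(a):\tau\in\QTAW\}<\dt/2$. For example, take $k$ mutually orthogonal, mutually Cuntz-equivalent nonzero elements in a hereditary subalgebra with $k\dt/2>\sup_\tau d_\tau(x)$ for a fixed $x$; this is standard in simple non-elementary algebras.

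Next I apply Lemma \ref{eler09-1} to $a$ and use real rank zero of $\Her_C(a)$ to obtain a nonzero projection $e\in\Her_C(a)$ with $\|ae-e\|$ small and tracially large inside $a$. Concretely, $a$ is approximated by $\sum r_ip_i$ with projections $p_i\in\Her_C(a)$, and I take $e$ to be the sum of those $p_i$ with $r_i$ not small. I then lift $e$ to $\{e_i\}$ with $e_i\in\Her(a)_+^1$. This gives $d_\tau(e_i)\le d_\tau(a)<\dt$.

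It remains to see that $e$ is full in $C$. Since $A$ is algebraically simple and $a\in\Ped(A)$, there are finitely many $x_j\in A$ with $\sum x_j^*ax_j$ approximating a strictly positive element of $\Her(1)$ in a uniform tracial sense. Fullness of $e$ in $C$ then follows from the uniform positivity $\inf_{\tau}\tau(f(a))>0$ over the compact set $\QTAW$ (Proposition \ref{elmy11-11}). I would verify that $\pi(\{e_i\})$ generates an ideal containing every projection of $C$. The cleanest route is to show that every element $x\in C_+$ satisfies $x\lesssim e\otimes 1_N$ for some $N$, using the uniform lower bound $\tau(e_i)\ge c>0$ together with comparison inside real rank zero $C$, namely tracial comparison of projections in $C$ supplied by \cite[Lemma 7.4]{FLosc}. \textbf{I expect this fullness step to be the main obstacle.}

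For (2), I induct on $n$, so it suffices to halve: write $p=q_1+q_2+r$ with $q_1\sim q_2$ and $r\lesssim e$, then iterate, using a projection $e'$ with size $\dt/2^{n}$ from (1) at each stage and adding up the remainders. The remainders are orthogonal projections, each subequivalent to the smaller $e'$, so their sum is $\lesssim e$ once $2^n$ copies of the small remainders fit in $e$; this works because the sizes are chosen geometrically.

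To halve $p$, I represent $p$ by $\{p_i\}$ with $p_i\in A_+^1$. Using Theorem \ref{eler03-1}(ii) together with Lin's orthogonality lemma \ref{yi04-1} applied with $h=\frac12\wh{p_i}$, I obtain orthogonal positive elements of small oscillation whose traces split $p_i$ into two nearly equal halves. Since $C$ has real rank zero, I pass to spectral projections of their images in $C$ and get $q_1,q_2\le p$ with $\tau(q_1)\approx\tau(q_2)\approx\tau(p)/2$ uniformly over limit traces. Then \cite[Lemma 7.6]{FLosc} (comparison of projections in $C$ by limit traces) gives $q_1\lesssim q_2$, and I replace $q_2$ by a subprojection equivalent to $q_1$. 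The leftover $r=p-q_1-q_2$ has trace uniformly below the trace of $e$, so $r\lesssim e$ by the same comparison.
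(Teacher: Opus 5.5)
\textbf{There is a genuine gap: your argument relies on a comparison principle that nothing here provides.} Every step where you turn tracial information into a Cuntz or Murray--von Neumann relation in $C=l^\infty(A)/J_A$ invokes comparison of projections in $C$ by (limit) traces. This covers fullness of $e$ in (1) (``$x\lesssim e\otimes 1_N$''), and $q_1\lesssim q_2$, $r\lesssim e$, and ``the remainders fit in $e$'' in (2).

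Nothing in the hypotheses or in the results you cite provides this. No comparison property of $A$ is assumed. Such a principle in $l^\infty(A)/J_A$ would essentially give the tracial strict comparison that the paper obtains only in Corollary \ref{elmy15-03}, under extra hypotheses and via tracially complete algebras.

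You are also misreading \cite[Lemmas 7.4, 7.6]{FLosc}; neither is a comparison result. Lemma 7.4 provides:
\begin{itemize}
\item a full projection $\bar e=\pi(\{\bar e_i\})$ with $\sup_\tau d_\tau(\bar e_i)<\dt/4$;
\item mutually orthogonal full projections $r_k\perp\bar e$ with $2^{2k}[r_k]\le[\bar e]$;
\item in each $r_k$, mutually orthogonal, mutually equivalent full subprojections $r_{k,1},\dots,r_{k,2^{k+1}}$.
\end{itemize}
Lemma 7.6 takes exactly these data and performs a Zhang-type halving in $C$.

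The paper's proof then runs as follows. If $p\lesssim\bar e$, the decomposition is trivial. Otherwise Lemma 7.6 gives equivalent $p_1,\dots,p_{2^n}$ with $p_{2^n+1}\lesssim\bar e+e'$, where $e'$ is a finite sum of $r_{k,j}$'s and $e'\lesssim\bar e$. One sets $e:=\bar e+e'$, lifted as $\bar e_i+v_iv_i^*$ with $v_i^*v_i\in\Her(\bar e_i)$, so $d_\tau(e_i)\le 2d_\tau(\bar e_i)<\dt/2$.

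All subequivalences there come from the built-in inequalities $2^{2k}[r_k]\le[\bar e]$, never from traces. Moreover, $e$ is allowed to depend on $p$ and $n$. Proving (2) for an $e$ fixed in advance, as you try to, is a stronger statement that again needs comparison.

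\textbf{Your trace-level halving is also unjustified, and the attempted route is circular.} Lemma \ref{yi04-1} takes as input an element $a\lesssim p_i$ of small oscillation and returns pieces with $d_\tau$ close to $d_\tau(a)$. It does not accept a target function such as $\frac12\widehat{p_i}$. Producing $a\lesssim p_i$ with $d_\tau(a)\approx\frac12\tau(p_i)$ uniformly is itself a divisibility (hereditary-denseness) statement.

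In the paper, divisibility is derived from tracial approximate oscillation zero only through this very lemma, via Corollary \ref{elmy05-cor1}, Theorem \ref{elmy11-04} and Propositions \ref{elmy11-07} and \ref{elmy06-07}. Hereditary denseness additionally needs exactness. So using it here is circular.

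Theorem \ref{eler03-1}(ii) does not help either. Its pieces $b_n$ need not be tracially small; in the construction the first one can carry almost all of $d_\tau(e)$. So they cannot be grouped into two families with nearly equal traces uniformly over the trace simplex.
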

\begin{proof}
By \cite[Lemma 7.4]{FLosc}, the following (i)-(iii) hold: 

(i) There is a full projection $\bar e=\pi(\{\bar e_i\})\in l^\infty(A)/J_A$
with $\{e_i\}\subset l^\infty(A)_+$ and $\sup\{d_\tau(e_i):\tau\in\QT(A)\}<\dt/4.$ 


(ii) There is a sequence of mutually orthogonal full projections 
$r_1,r_2,...\in l^\infty(A)/J_A$ 
such that $er_i=0$ for all $i\in\N,$ and $2^{2k}[r_k]\le[\bar e].$

(iii) For each $k,$  there are mutually orthogonal and mutually equivalent full projections $r_{k,1},...r_{k,2^{k+1}}\in \Her(r_k).$

Note that, if $p\lesssim \bar e,$ then we can take $p_1=...=p_{2^n}=0,$ $p_{2^n+1}:=p,$ 
$e:=\bar e,$ $e_i:=\bar e_i$ for all $i\in\N.$ Then both (1) and (2) hold. 
Hence in the following we may assume that 
$p\not\lesssim \bar e.$

Note that (i), (ii), and (iii) fit into the assumptions of \cite[Lemma 7.6]{FLosc}. 
Then by \cite[Lemma 7.6]{FLosc}, 
there are mutually orthogonal  
projections $p_1,...,p_{2^n}, p_{2^n+1}\in l^\infty(A)/J_A$ 
such that $p_1,...,p_{2^n}$ are mutually equivalent, 
and $p_{2^n+1}\lesssim \bar e+ e',$ where $e'$ is a projection, which can be written as a finite sum of $r_{k,j},$ and moreover, $e'\lesssim \bar e.$

Now define $e:=\bar e+e'.$ Then (2) of the lemma holds. 

Since $e'\lesssim \bar e,$ there is a sequence $\{v_i\}\in l^\infty(A)^1$ such that $e'=\pi(\{v_iv_i^*\})$
and $v_i^*v_i \in \Her(\bar e_i)$ for all $i\in\N.$
Define $e_i=\bar e_i+v_iv_i^*.$ Then $e=\pi(\{e_i\}).$
For all $i\in\N$ and all $\tau\in\QT(A),$ 
$d_\tau(e_i)\le d_\tau(\bar e_i)+d_\tau(v_iv_i^*)\le 2 d_\tau(\bar e_i)<\dt/2.$ Hence (1) holds. 
\end{proof}

\begin{cor}\label{elmy05-cor1}
Let $A$ be a simple non-elementary \CA\ with $\QT(A)\neq \emptyset.$
Assume that $A$ has tracial approximate oscillation zero. 
Let $p\in l^\infty(A)/J_A$ be a projection. 
Let $n\in\N$ and let $\dt>0.$ 
Then there are mutually orthogonal 
projections $p_1,...,p_{n}, p_{n+1}\in l^\infty(A)/J_A$ 
satisfy the following:  
(1) $p=\sum_{i=1}^{n+1}p_i;$
(2) $p_1,...,p_{n}$ are mutually equivalent; 
(3)
$p_{n+1}=\pi(\{e_i\})\in l^\infty(A)/J_A$
with $\{e_i\}\in l^\infty(A)_+^1$ 
such that $\sup\{d_\tau(e_i):\tau\in\QT(A)\}<\dt$ 
and $\|e_i\|_2<\dt$
for all $i\in\N.$ 
\end{cor}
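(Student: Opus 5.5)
The plan is to apply Lemma \ref{elmy05-lem1} with $2^N\ge n$ and then regroup the $2^N$ equivalent pieces into $n$ equal blocks plus a remainder. The remainder will be absorbed into a projection Cuntz-below the small full projection $e$, and we then realize it by a sequence of elements of small dimension function.

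\emph{Step 1: the small projection.} Fix $N\in\N$ with $2^N\ge n$, and put $q:=2^N-n\lfloor 2^N/n\rfloor<n$. Apply Lemma \ref{elmy05-lem1} with this $N$ and a parameter $\dt'>0$, to be chosen at the end (something like $\dt'\le \dt^2/(4(n+1))$). This produces mutually orthogonal projections $r_1,\dots,r_{2^N},r_{2^N+1}$ summing to $p$, with $r_1,\dots,r_{2^N}$ mutually equivalent and $r_{2^N+1}\lesssim e=\pi(\{e_i'\})$, where $\sup_\tau d_\tau(e_i')<\dt'$.

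\emph{Step 2: regrouping.} Set $k:=\lfloor 2^N/n\rfloor$ and $p_j:=\sum_{l=(j-1)k+1}^{jk} r_l$ for $j=1,\dots,n$; these are mutually equivalent and orthogonal. Let $p_{n+1}$ be the sum of the leftover $r_l$'s (there are $q<n$ of them) together with $r_{2^N+1}$. Then (1) and (2) hold. To bound the size of $p_{n+1}$, note that each leftover $r_l$ has "size" at most $1/2^N$ of $p$. To make this small I would instead choose $N$ large, rather than relying on $e$. Concretely, I would first use Lemma \ref{elmy05-lem1}(1),(2) repeatedly: since $r_l\lesssim$ any one of the $2^N$ equal pieces, one has $2^N[r_l]\le[p]$. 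Combined with $\|p\|\le 1$ and traces on $l^\infty(A)/J_A$ being limit traces, this gives tracial size of $r_l$ at most $\sup_\tau\|\tau|_A\|\cdot M/2^N$.

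\emph{Step 3: lifting with control on $d_\tau$ and $\|\cdot\|_2$.} This step I expect to be the main obstacle, because (3) demands a lift $\{e_i\}$ of $p_{n+1}$ with uniformly small $d_\tau(e_i)$, not merely small traces of the projection. For the part $r_{2^N+1}\lesssim e$, I will do as in the proof of Lemma \ref{elmy05-lem1}: write $r_{2^N+1}=\pi(\{v_iv_i^*\})$ with $v_i^*v_i\in\Her(e_i')$, so $d_\tau(v_iv_i^*)\le d_\tau(e_i')<\dt'$. For the leftover $r_l$, I would rather avoid them altogether by a better regrouping. Apply Lemma \ref{elmy05-lem1}(2) again to each leftover $r_l$ with a fresh small $e$, splitting it into $2^{N'}$ equal pieces plus a piece below a small full projection. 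Iterating finitely many times and distributing equal pieces evenly among $p_1,\dots,p_n$, the non-distributable remainder becomes a sum of at most $n$ pieces each $\lesssim$ some small $e^{(s)}$, plus pieces whose trace is geometrically small. I would make the remainder consist only of pieces under small projections $e^{(s)}$, with $\dt$-budget $\dt'/2^s$, by noting that after $s$ rounds the undistributed equal pieces have $d$ below the budget. Such a piece can then be dominated by a projection $\lesssim$ a new small full projection via comparison inside the real rank zero algebra (Theorem \ref{elmy11-10}(3) gives real rank zero, and the projections of $d$-size small can be compared using fullness/strict-type comparison from \cite{FLosc}). This comparison step is where I am least sure.

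\emph{Step 4: finishing.} Once $p_{n+1}=\pi(\{x_i\})$ with $d_\tau(x_i)<\dt$, I replace $x_i$ by $e_i:=f_{\eta}(x_i)$-type positive contractions. I choose them via functional calculus on a self-adjoint lift of the projection, e.g.\ $e_i=g(x_i)$ with $g$ vanishing near $0$ and equal to $1$ near $1$. This keeps $\pi(\{e_i\})=p_{n+1}$ (since $p_{n+1}$ is a projection, $g$ fixes it) and $d_\tau(e_i)\le d_\tau(x_i)<\dt$. Since $0\le e_i\le 1$, we get $\|e_i\|_{2,\tau}^2\le d_\tau(e_i)$, so $\|e_i\|_2\le\dt^{1/2}$. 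Replacing $\dt$ by $\min\{\dt,\dt^2\}$ at the outset then gives (3) for all $i$.
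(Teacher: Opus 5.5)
You apply Lemma \ref{elmy05-lem1} with $2^N\ge n$, form $n$ blocks of $k=\lfloor 2^N/n\rfloor$ equivalent pieces, and put the $q<n$ leftovers $r_l$ ($l\in L$) together with $r_{2^N+1}$ into $p_{n+1}$. This regrouping is the natural way to read the paper's one-line ``direct corollary'' argument. Your lift $v_iv_i^*$ of $r_{2^N+1}$, and the truncation and $2$-norm estimate in Step 4, are fine.

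The gap is Step 3: you never produce a lift of $\sum_{l\in L}r_l$ with uniformly small $d_\tau$. Your iteration cannot remove the remainder when $n$ is not a power of $2$, because the leftover count stays congruent to a power of $2$ modulo $n$. So after finitely many rounds you still have undistributed equal pieces. The claim that they ``have $d$ below the budget'' is exactly what has to be proved; if you had it after the first round you would already be done. Your proposed escape is to dominate such pieces by a projection below a small full projection, via comparison in $l^\infty(A)/J_A$. That is not available: real rank zero gives no comparison of projections by traces, and $A$ is not assumed to have any strict comparison. Also, not every trace on $l^\infty(A)/J_A$ is a limit trace, although you only need limit traces.

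The missing observation is that the leftover pieces are already small at the level of lifts.
\begin{itemize}
\item The projections $r_1,\dots,r_{2^N}$, together with the partial isometries implementing their equivalence, give a $*$-homomorphism $M_{2^N}\to l^\infty(A)/J_A$. By projectivity of the cone over $M_{2^N}$, it lifts to a c.p.c.\ order zero map with components $\phi_i\colon M_{2^N}\to A$.
\item The elements $\phi_i(e_{l,l})$ are mutually orthogonal and mutually Cuntz equivalent, since $\phi_i(e_{l,1})^*\phi_i(e_{l,1})=\phi_i(e_{1,1})^2$ and $\phi_i(e_{l,1})\phi_i(e_{l,1})^*=\phi_i(e_{l,l})^2$.
\item Hence for $\tau\in\QT(A)$ you get $2^N d_\tau(\phi_i(e_{1,1}))=d_\tau(\phi_i(1_{2^N}))\le\|\tau|_A\|=1$.
\item So $x_i:=\sum_{l\in L}\phi_i(e_{l,l})$ lifts $\sum_{l\in L}r_l$ with $d_\tau(x_i)\le (n-1)/2^N$.
\end{itemize}
Choose $N$ with $(n-1)/2^N<\dt'/2$ and apply the lemma with $\dt'/2$. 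Then $x_i+v_iv_i^*$ lifts $p_{n+1}$ with $d_\tau<\dt'$ by subadditivity of $d_\tau$, and your Step 4 completes the proof.

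Alternatively, your own trace bound from Step 2 would suffice. For any lift $y_i\in A_+^1$ of a projection, $f_{1/4}(y_i)$ is again a lift, and $d_\tau(f_{1/4}(y_i))\le\tau(f_{1/8}(y_i))\le 8\tau(y_i)$. An ultrafilter argument turns the bound on limit traces into $\sup_{\tau\in\QT(A)}\tau(y_i)\le 2^{-N}+\eta$ for all large $i$. Finitely many indices can then be replaced by $0$.
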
 
\begin{proof}
This is a direct corollary of Lemma \ref{elmy05-lem1}.
\end{proof}


Secondly, we give a direct proof of the following proposition 
using Corollary \ref{elmy05-cor1}
(see Definition \ref{elmy13-04} for tracial approximate oscillation zero and Definition \ref{elmy13-05} for Property (TM)): 

\begin{prop}\label{elmy05-01}
Let $A$ be a simple non-elementary \CA\ with $\QT(A)\neq \emptyset.$
Assume that $A$ has tracial approximate oscillation zero, 
then $A$ has Property (TM). 
\end{prop}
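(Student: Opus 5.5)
We argue directly, mirroring the proof of Theorem \ref{elmy11-04} but aiming at $\|\phi(1_n)a-a\|_2$ rather than $\|\phi(1_n)-a\|_2$. Fix $a\in\Ped(A\otimes\K)_+^1$, $n\in\N$ and $\ep>0$, and set $B:=\Her(a)$. By Lemma \ref{elmy08-04} the norms $\|\cdot\|_2$ and $\|\cdot\|_{2,B}$ are equivalent on $B$, and $J_B$ coincides with the $\|\cdot\|_2$-null sequences. Also $B$ inherits tracial approximate oscillation zero (\cite[Proposition 5.4]{FLosc}), so we may work inside $C:=l^\infty(B)/J_B$ with quotient map $\pi$. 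By \cite[Theorem 6.4]{FLosc}, $C$ has real rank zero. Hence $\Her_C(a)$ has an approximate unit of projections, and there is a projection $p\in\Her_C(a)$ with $\|pa-a\|<\dt$ for a prescribed small $\dt$.

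\textbf{Step 1.} Apply Corollary \ref{elmy05-cor1} to $p$ (with $A$ replaced by $B$, or directly in $A$ with $\Her(a)$). This gives $p=\sum_{j=1}^{n+1}q_j$, where $q_1,\dots,q_n$ are mutually orthogonal and mutually equivalent, and $q_{n+1}=\pi(\{e_i\})$ with $\|e_i\|_2<\dt$. The partial isometries implementing $q_1\sim q_j$ produce a unital $*$-homomorphism $h:M_n\to(p-q_{n+1})C(p-q_{n+1})$, so that $h(1_n)=p-q_{n+1}$. Since $h$ is an order zero map from $M_n$, projectivity of the cone $CM_n$ (\cite[Corollary 4.1]{WZ09OZ}) lets us lift $h$ to a c.p.c.~order zero map $\Phi:M_n\to l^\infty(B)$ with components $\phi_k:M_n\to\Her(a)$.

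\textbf{Step 2 (estimate).} In $C$ we have
\[
h(1_n)a-a=(p-q_{n+1})a-a=(pa-a)-q_{n+1}a.
\]
The first term has norm less than $\dt$. For the second term, $\|q_{n+1}a\|_{2}\le\|q_{n+1}\|_2\|a\|$, and $\|q_{n+1}\|_2\le\sup_i\|e_i\|_2\le\dt$, where the $2$-seminorm on $C$ is the limit of the sequence seminorms. Lifting back, there is $\{c_k\}\in J_B$ with $\phi_k(1_n)a-a-(\text{lift of }pa-a)+e_ka=c_k$. Hence for large $k$,
\[
\|\phi_k(1_n)a-a\|_2\le\dt+\dt+o(1).
\]
The norm term contributes at most $\dt$, because $\|x\|_2$ is dominated by $\sup_\tau\|\tau|_B\|^{1/2}\|x\|$. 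With $\dt$ chosen small relative to $\ep$ and the constants of Lemma \ref{elmy08-04}, this gives $\|\phi_k(1_n)a-a\|_2<\ep$. Therefore $A$ has Property (TM).

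\textbf{Main obstacle.} The delicate point is the bookkeeping between the $2$-norm on $A$ and that on $B=\Her(a)$, and making sure that Corollary \ref{elmy05-cor1} can be applied to a projection of $l^\infty(B)/J_B$ rather than of $l^\infty(A)/J_A$. We intend to handle this exactly as in Theorem \ref{elmy11-04}, using the identification $J_B=\{\{b_n\}:\|b_n\|_2\to0\}$ from Lemma \ref{elmy08-04}(3). A subsidiary point is the approximation $\|pa-a\|<\dt$ by a projection. This follows because real rank zero passes to the hereditary subalgebra $\Her_C(a)$, which therefore has an approximate unit of projections.
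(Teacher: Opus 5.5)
Your argument is correct, but it follows a different route from the paper's appendix proof. In effect it is the proof of Theorem \ref{elmy11-04} with the target changed from $\|\phi(1_n)-a\|_2$ to $\|\phi(1_n)a-a\|_2$, i.e.\ a one-step merger of Theorem \ref{elmy11-04} and Proposition \ref{elmy11-07}.

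The paper deliberately avoids the real rank zero theorem \cite[Theorem 6.4]{FLosc}. It takes $a_i\in\Her(a)_+^1$ with $\|a-a_i\|_2\to 0$ and $\omega(a_i)\to 0$. It then chooses $\dt_i\to 0$ with $\sup_{\tau}\tau(f_{\dt_i}(a_i)-f_{2\dt_i}(a_i))\to 0$. From this, $p=\pi(\{f_{\dt_i}(a_i)\})$ is already a projection in $\pi(l^\infty(\Her(a)))\subset l^\infty(A)/J_A$, and $pa=a$ holds exactly.

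That construction buys three things:
\begin{enumerate}
\item[(i)] no appeal to \cite[Theorem 6.4]{FLosc} or \cite[Proposition 5.4]{FLosc};
\item[(ii)] no error term $\|pa-a\|$;
\item[(iii)] no passage to $l^\infty(B)/J_B$, since Corollary \ref{elmy05-cor1} is applied directly in $l^\infty(A)/J_A$ with the original $2$-norm.
\end{enumerate}
Point (iii) matters because Lemma \ref{elmy08-04} assumes $A$ is $\sigma$-unital and algebraically simple, which is more than the proposition assumes. Your route buys uniformity with Section 7 at the cost of heavier input. If you keep it, note that $B=\Her(a)$ is itself $\sigma$-unital and algebraically simple. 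The only comparison you actually use is $\|x\|_2\le M^{1/2}\|x\|_{2,B}$ with $M=\sup_{\tau\in\QT(A)}\|\tau|_B\|$, and you can bound $M$ directly from $a\in\Ped(A\otimes\K)$, without invoking Lemma \ref{elmy08-04}.

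One correction. The elements of $\TQT(A)$ are only $2$-quasitraces, so $\|\cdot\|_2$ is not known to satisfy the triangle inequality. Your estimate $\|\phi_k(1_n)a-a\|_2\le\dt+\dt+o(1)$ should instead combine the three terms (the lift of $pa-a$, $e_ka$, and $c_k$) via Haagerup's inequality $\|x+y\|_2^{2/3}\le\|x\|_2^{2/3}+\|y\|_2^{2/3}$, as the paper does. This only changes the constants in the choice of $\dt$.
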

\begin{proof}
Let $B:=l^\infty(A)/J_A$ and let $\pi: l^\infty(A)\to B$ be the quotient map. 
Let $a\in \Ped(A\otimes\K)_+^1\setminus\{0\},$
$\ep>0,$ and $n\in\N.$ 
Let $S:=\ol{\QT(A)}^w.$ 
Let $\dt<\ep/8.$
Since $A$ has tracial approximate oscillation zero, 
there is a sequence $\{a_i\}_i\subset\Her(a)_+^1$ such that 
\beq\label{elmy06-05}
\lim_{i\to\infty}\|a-a_i\|_2=0 
\mbox{\quad and \quad}
\lim_{i\to\infty} \omega(a_i)=0.
\eneq
Since $\lim_{n\to\infty} \omega(a_i)=0,$ there is a sequence $\{\dt_i\}\subset\R_+$ such that $\lim_i\dt_i=0$ and 
\beq
\lim_i \sup\{\tau(f_{\dt_i}(a_i)-f_{2\dt_i}(a_i)):\tau\in S\}=0. 
\eneq
Then 
$
0\le \lim_i \sup_{\tau\in S}\{\tau(f_{\dt_i}(a_i)-f_{\dt_i}(a_i)^2)\}
\le \lim_i \sup_{\tau\in S}\{\tau(f_{\dt_i}(a_i)-f_{2\dt_i}(a_i))\}=0. 
$
Hence 
$
\lim_i \|f_{\dt_i}(a_i)-f_{\dt_i}(a_i)^2\|_2=0.
$
Thus
$
\pi(\{f_{\dt_i}(a_i)\})-\pi(\{f_{\dt_i}(a_i)\})^2=0.
$
Therefore  $p:=\pi(\{f_{\dt_i}(a_i)\})\in\pi(l^\infty(\Her(a)))$ is a projection. 
Since $\lim_i \|f_{\dt_i}(a_i) a_i-a_i\|=0,$ by \eqref{elmy06-05}, 
we have 
\beq\label{elmy06-06}
pa=a.
\eneq

By Corollary \ref{elmy05-cor1}, 
there are mutually orthogonal projections $p_1,...p_{n}, p_{n+1}$ satisfy the following: 

(1) $p=\sum_{i=1}^{n+1}p_i;$

(2) $p_1,...,p_{n}$ are mutually equivalent; 

(3)
$p_{n+1}=\pi(\{e_i\})\in l^\infty(A)/J_A$
with $\{e_i\}\in l^\infty(A)_+^1$ and 
$\|e_i\|_2<\dt$
for all $i\in\N.$

Since $p_1,...p_{n}\in \pi(l^\infty(\Her(a)))$ are mutually orthogonal and mutually equivalent projections,
there is an 
injective *-homomorphism 
$\bar \phi: M_{n}\to 
\pi(l^\infty(\Her(a)))$ such that 
$\bar\phi(1_n)=\sum_{i=1}^np_i=p.$
By the projectivity of the cone over $M_{n}$ (see \cite[Theorem 10.2.1]{Lor}), 
there is a c.p.c.~order zero map $\phi:M_{n}\to l^\infty(\Her(a))$ such that $\bar\phi=\pi\circ\phi.$ Let $\phi_i:M_{n}\to \Her(a)$ be the components of $\phi$ ($i\in\N$). 
By (3), $p=\pi(\{\phi_i(1_n)\})+\pi(\{e_i\}).$ 
By \eqref{elmy06-06}, 
we have 
$pa=a,$ and hence 
\beq
\lim_i\|(\phi_i(1_n)+e_i)a-a\|_2=0.
\eneq
Then there is $m\in\N$ such that 
$\|(\phi_m(1_{n})+e_m)a-a\|_2<\dt.$
By \cite[Lemma 3.5]{Haa} and (3),  
\beq
\|\phi_m(1_{n})a-a\|_2^{2/3}
&\le &
\|(\phi_m(1_{n})+e_m)a-a\|_2^{2/3}+
\|e_ma\|_2^{2/3}
\\ &\le & 
\dt^{2/3}+
\|e_m\|_2^{2/3}
\le  2\dt^{2/3}<\ep^{2/3}.
\eneq
Then the proposition holds. 
\end{proof}


Thirdly, we note a 
fact that follows from the definitions.

\begin{prop}\label{elmy06-07}
Let $A$ be a simple $\sigma$-unital 
non-elementary \CA\ with $\QT(A)\neq \emptyset.$
Assume that $A$  has Property (TM), then $A$ is tracially almost divisible. 
\end{prop}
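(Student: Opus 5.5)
We need to show: given $a\in M_m(A)_+^1$ (equivalently $a\in\Ped(A\otimes\K)_+^1$; since $A$ is algebraically simple, $M_m(A)\subset\Ped(A\otimes\K)$), $n\in\N$ and $\ep>0$, there is a c.p.c.~order zero map $\phi:M_n\to\Her(a)$ with $\tau(\phi(1_n))\ge\tau(a)-\ep$ for all $\tau\in\QT(A)$. The plan is to start from Property (TM) and then convert the $2$-norm estimate $\|\phi(1_n)a-a\|_2$ into a trace estimate on $\phi(1_n)$ by compressing with $a$.

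\emph{Step 1: apply (TM).} Fix $\dt>0$, to be chosen below. By Property (TM) there is a c.p.c.~order zero map $\psi:M_n\to\Her(a)$ with $\|\psi(1_n)a-a\|_2<\dt$. Because $\psi$ need not be comparable to $a$ in order, I do not use $\psi$ directly. Instead I consider the compression
$$\phi(x):=a^{1/2}\psi(x)a^{1/2}.$$
This map is c.p.c. and lands in $\Her(a)$, but in general it is \emph{not} order zero, since the image of $\psi$ need not commute with $a$. So a different route is needed, and Step 2 replaces the compression by a trace estimate on $\psi(1_n)$ itself.

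\emph{Step 2: estimate the trace of $\psi(1_n)$ directly.} Since $\psi(1_n)\le 1$ in $\wtd{\Her(a)}$, we have $a^{1/2}\psi(1_n)a^{1/2}\le a$. Write $c=\psi(1_n)$. First,
$$\tau(a)-\tau(a^{1/2}ca^{1/2})=\tau(a^{1/2}(1-c)a^{1/2}).$$
I bound this quantity by $\|(1-c)a\|_{2,\tau}$ via a Cauchy--Schwarz-type argument. For quasitraces, Lemma \ref{yi15-1gg}(1) (and, when needed, Haagerup's inequality \cite[Lemma 3.5]{Haa}) handles the element $|(1-c)a|$ in $\Her(a)$, and the factor involving $\|\tau|_{\Her(a)}\|$ is absorbed through Lemma \ref{elmy07-01}. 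The result is an estimate of the form
$$\tau(a)-\tau(a^{1/2}ca^{1/2})\le C\dt^{1/2}\quad\text{for all }\tau\in\QT(A).$$

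Next I compare $\tau(a^{1/2}ca^{1/2})$ with $\tau(c)$. Trace-type invariance gives $\tau(a^{1/2}ca^{1/2})=\tau(c^{1/2}ac^{1/2})\le\|a\|\,\tau(c)\le\tau(c)$, using $x^*x$ versus $xx^*$ with $x=c^{1/2}a^{1/2}$ (property (1) of quasitraces). Combining the two estimates yields $\tau(\psi(1_n))\ge\tau(a)-C\dt^{1/2}$. Choosing $\dt$ with $C\dt^{1/2}<\ep$ and setting $\phi=\psi$ finishes the proof.

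\emph{Main obstacle.} The delicate point is the first inequality in Step 2 for $2$-quasitraces, which are not linear. I would handle it by working inside the commutative-enough setting Lemma \ref{yi15-1gg} already provides: write $a-a^{1/2}ca^{1/2}$ as a positive element $y$ of $\Her(a)$ and note that $y$ and $a^{1/2}ca^{1/2}$ sum to $a$. Quasitraces are additive on such sums up to the Haagerup estimate, as in the proof of Lemma \ref{yi15-1gg}(2). Then $\tau(y)\le\|y\|_{2,\tau}$ by Lemma \ref{yi15-1gg}(1), and $\|y\|_{2,\tau}=\|a^{1/2}(1-c)a^{1/2}\|_{2,\tau}\le\|(1-c)a\|_{2,\tau}^{\theta}$ for some power $\theta$, again by Haagerup-type inequalities. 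The constants depend only on $M$ from Lemma \ref{elmy07-01}, which makes the choice of $\dt$ legitimate.
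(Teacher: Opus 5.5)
Your outline follows the paper's: take $\psi$ from Property (TM), compare $a$ with a compression of $a$ that lies below $c=\psi(1_n)$, and use the continuity estimate of Lemma \ref{elmy07-01}. The final map is $\psi$ itself, and your last comparison $\tau(a^{1/2}ca^{1/2})=\tau(c^{1/2}ac^{1/2})\le\tau(c)$ is fine.

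\textbf{The gap.} It is the step you flag yourself: bounding $\tau(a)-\tau(a^{1/2}ca^{1/2})$ by a power of $\|(1-c)a\|_{2}$. The identity $\tau(a)-\tau(a^{1/2}ca^{1/2})=\tau(a^{1/2}(1-c)a^{1/2})$ and the Cauchy--Schwarz step are arguments for traces only. For $2$-quasitraces you correctly reduce to showing that $y=a^{1/2}(1-c)a^{1/2}$ is small in $\|\cdot\|_{2,\tau}$. You then simply assert $\|y\|_{2,\tau}\le\|(1-c)a\|_{2,\tau}^{\theta}$ ``by Haagerup-type inequalities.'' That does not follow from \cite[Lemma 3.5]{Haa}.

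The natural reduction runs as follows. From $y^2\le a^{1/2}(1-c)^2a^{1/2}$ you get $\|y\|_{2,\tau}^2\le\tau((1-c)a(1-c))$. What (TM) controls, however, is $\|(1-c)a\|_{2,\tau}^2=\tau((1-c)a^2(1-c))$. Passing from $a^2$ back to $a$ needs a cut-off near $0$ in the spectrum of $a$, together with the bound $M$ on $\|\tau|_{\Her(a)}\|$. Your sketch does not contain this ingredient.

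It can be supplied. Put $e_\eta:=\min(1,a/\eta)\in\Her(a)_+^1$; then $a\le\eta e_\eta+a^2/(4\eta)$. Monotonicity and $\tau(x+y)\le 2(\tau(x)+\tau(y))$ then give $\tau((1-c)a(1-c))\le 2\eta M+\|(1-c)a\|_{2,\tau}^2/(2\eta)$. Taking $\eta=\dt$ yields $\theta=1/2$, with a constant depending on $M$. As written, though, the key estimate of the proof is missing.

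\textbf{How the paper avoids this.} It compresses by $c$ on both sides rather than by $a^{1/2}$, writing $a-cac=(a-ca)+c(a-ac)$. Two facts follow from monotonicity and $\tau(z^*z)=\tau(zz^*)$: $\|cz\|_{2,\tau}\le\|z\|_{2,\tau}$ and $\|z^*\|_{2,\tau}=\|z\|_{2,\tau}$. Hence the $2/3$-quasi-triangle inequality gives $\|a-cac\|_2^{2/3}\le 2\|a-ca\|_2^{2/3}$. Lemma \ref{elmy07-01} then gives $|\tau(a)-\tau(cac)|<\ep$, and $cac\le c^2\le c$ finishes. No cross term with mismatched powers of $a$ ever appears, so no cut-off argument is needed.
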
 
\begin{proof}
Let $a\in\Ped(A\otimes\K)_+^1,$ $n\in\N,$ and $\ep>0.$
Note that there is 
$\dt>0$ such that if $x,y\in\Her(a)_+^1$ satisfy $\|x-y\|_2\le \dt,$ then $|\tau(x)-\tau(y)|<\ep/4$ (see, for example, Lemma \ref{elmy07-01}). 
By Property (TM), there is a c.p.c.~order zero map $\phi: M_n\to \Her(a)$ such that $\|a-\phi(1_n)a\|_2<\dt/4.$
Then 
$$
\|a-\phi(1_n)a\phi(1_n)\|_2^{2/3}\le 
\|a-\phi(1_n)a\|_2^{2/3}+\|\phi(1_n)(a- a\phi(1_n))\|_2^{2/3}
\le 2(\dt/4)^{2/3}. 
$$
Hence $\|a-\phi(1_n)a\phi(1_n)\|_2\le \dt.$
By the choice of $\dt,$ 
we have $|\tau(a)-\tau(\phi(1_n)a\phi(1_n))|<\ep.$
Then
$
\tau(\phi(1_n))\ge \tau(\phi(1_n)a\phi(1_n))
\ge \tau(a)-\ep. 
$
Thus $A$ is tracially almost divisible. 
\end{proof}

Finally, a corollary. 
The unital case of the following proposition also appears in Vaccaro's recent work \cite[Theorem B]{Vacc}, which builds on our previous result \cite[Theorem A]{Fu2025}. Here we prove the full proposition in the general case.

\begin{prop} 
{\rm (cf.~\cite[Theorem B]{Vacc})}
Let $A$ be an {{algebraically}} simple separable \CA\ with stable rank one. 
Then $A$ is tracially almost divisible. 
\end{prop}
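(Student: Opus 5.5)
The plan is to pass through tracial approximate oscillation zero and then run the chain of implications in Theorem \ref{elmy11-10}. None of the steps from (2) to (6) uses exactness.

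First, \cite[Theorem A]{Fu2025} shows that an algebraically simple separable \CA\ of stable rank one has tracial approximate oscillation zero. Strictly speaking, it shows that $\Gamma$ is hereditary surjective, which by \cite[Theorem 5.8]{Fu2025} gives tracial approximate oscillation zero. This step needs no exactness, and it is the only place stable rank one is used. A stable rank one algebra that is non-elementary is automatically of the kind considered. If $A$ is elementary, then $A\cong\K$ or $M_n$, which has stable rank one but is not divisible. So I would first check that non-elementary is implicit: simple separable with $\QT(A)\neq\emptyset$ and algebraically simple. If it is not implicit, I would add the hypothesis, or assume it tacitly as the surrounding theorem does.

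Second, with tracial approximate oscillation zero in hand, I apply the implications $(2)\Rightarrow(3)\Rightarrow(4)\Rightarrow(5)\Rightarrow(6)$ of Theorem \ref{elmy11-10}. These run as follows.
\begin{itemize}
\item Tracial approximate oscillation zero implies that $l^\infty(A)/J_A$ has real rank zero, by \cite[Theorem 6.4]{FLosc}.
\item Tracial approximate oscillation zero implies tracial diagonal divisibility, by Theorem \ref{elmy11-04}. This uses Corollary \ref{elmy05-cor1} to split projections of $l^\infty(\Her(a))/J$ evenly, and it only needs $\sigma$-unitality, which separability provides.
\item Tracial diagonal divisibility implies Property (TM), by Proposition \ref{elmy11-07}. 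Alternatively, one can go straight from tracial approximate oscillation zero to Property (TM) via Proposition \ref{elmy05-01}.
\item Property (TM) implies tracial almost divisibility, by Proposition \ref{elmy06-07}.
\end{itemize}
The cleanest route is tracial approximate oscillation zero, then Proposition \ref{elmy05-01}, then Proposition \ref{elmy06-07}. It avoids real rank zero entirely apart from what is inside Corollary \ref{elmy05-cor1}.

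Third, I reconcile the definition of tracial almost divisibility. The definition quantifies over $a\in M_m(A)_+^1$ with $\tau$ ranging over $\QT(A)$. Property (TM), however, is phrased for $a\in\Ped(A\otimes\K)_+^1$. Since $A$ is algebraically simple, $M_m(A)=\Ped(M_m(A))$ sits inside $\Ped(A\otimes\K)$, so every such $a$ is covered. The estimate $\tau(\phi(1_n))\ge\tau(a)-\ep$ from Proposition \ref{elmy06-07} is exactly the required inequality with $m=0$.

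I expect the only real obstacle to be the first step: checking that the cited hereditary surjectivity result of \cite{Fu2025} applies in the non-unital, algebraically simple generality. The remaining steps are assembled from results already established in the paper.
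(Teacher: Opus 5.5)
Your proposal is correct and is essentially the paper's proof. The paper also obtains tracial approximate oscillation zero from \cite[Theorem A]{Fu2025}, then passes through Property (TM) via the appendix (Corollary \ref{elmy05-cor1}, the input to Proposition \ref{elmy05-01}) and Proposition \ref{elmy06-07}, which is exactly your ``cleanest route''.

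Your caveat about non-elementarity is also well founded. $M_n$ is algebraically simple, separable and of stable rank one, but it is not tracially almost divisible, and Propositions \ref{elmy05-01} and \ref{elmy06-07} both assume $A$ non-elementary, so that hypothesis should be added to the statement.
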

\begin{proof}
By our previous result (\cite[Theorem A]{Fu2025}), 
$A$ has tracial approximate oscillation zero.
By Proposition \ref{elmy05-cor1} and Proposition \ref{elmy06-07},
$A$ is tracially almost divisible.
\end{proof}

\textsc{Xuanlong Fu}

Key Laboratory of Intelligent Computing and Applications 
(Tongji University), 
Ministry of Education,
School of Mathematical Sciences, 
Tongji University, 
1239 Siping Road, 
Yangpu District, 
Shanghai, China, 200092. 

E-mail: xuanlongfu@tongji.edu.cn.


\begin{thebibliography}{99}
\addcontentsline{toc}{section}{Reference}

\bibitem{AGJP}
M. Amini, N. Golestani, S. Jamali, and N. C. Phillips, 
{\itshape Non-unital tracially ${\cal Z}$-absorbing $C^*$-algebras.}

\bibitem 
{APRT}
R.~Antoine,  F.~Perera,  L.~Robert and H.~Thiel, 
{\itshape $C^*$-algebras of stable rank one and their Cuntz semigroups.} 
Duke Math. J. {\bf 171} (2022), no.~1, 33--99. 

\bibitem{APRT-JFA}
R.~Antoine,  F.~Perera,  L.~Robert and H.~Thiel, 
{\itshape Traces on ultrapowers of $C^*$-algebras.}
J. Funct. Anal. {\bf 286} (2024), no.~8, Article 110341, 65~pp.




\bibitem 
{BH}B.~Blackadar and D. Handelman,
{\itshape Dimension functions and traces on $C^*$-algebra.}
J. Funct. Anal. {\bf 45} (1982), 297--340.

\bibitem{BKR}
B.~Blackadar, A.~Kumjian, and M.~R{\o}rdam, 
{\itshape Approximately central matrix units and the structure of noncommutative tori.} 
K-Theory {\bf 6} (1992) 267--284.



\bibitem{Br77}
L.~G.~Brown,  
{\itshape Stable isomorphism of hereditary subalgebras of  
$C^*$-algebras.}
Pacific J. Math. {\bf 71} (1977), no. 2, 335--348. 

\bibitem{BP91}
L.~G.~Brown, G.~K.~Pedersen, 
{\itshape $C^*$-algebras of real rank zero.} 
J. Funct. Anal. {\bf 99}   (1991), 131--149.

\bibitem{BPT}
N.~P.~Brown, F.~Perera, and A.~Toms,
{\itshape The Cuntz semigroup, the Elliott conjecture, and dimension
functions on $C^*$-algebras.} 
J. Reine Angew. Math. {\bf 621} (2008), 191--211. 


\bibitem{TraCom}
J.~Carrión, J.~Castillejos, S.~Evington, J.~Gabe, 
C.~Schafhauser, A.~Tikuisis, S.~White, 
{\itshape Tracially complete $C^*$-algebras.}
preprint, arXiv:2310.20594v4. 


\bibitem{CETW-Gamma}
J.~Castillejos1, S.~Evington, A.~Tikuisis,  and
S.~White,
{\itshape Uniform Property $\Gamma.$}
Int. Math. Res. Not., Vol. 2022, No. {\bf 13}, pp. 9864--9908. 


\bibitem{CETWW}
J.~Castillejos1, S.~Evington, A.~Tikuisis,  
S.~White, and W.~Winter,
{\itshape Nuclear dimension of simple $C^*$-algebras.}
Invent. Math. {\bf 224} (2021), no. 1, 245--290. 

\bibitem{CLS}
J.~Castillejos, K.~Li, G.~Szab{\'o},
{\itshape On tracial $\mathcal{Z}$-stability of simple non-unital $\mathrm{C}^*$-algebras.}
Canad. J. Math. \textbf{76} (2024), no.~4, 1285--1303.


\bibitem{CP79}
J.~Cuntz and G.~K.~Pedersen, 
{\itshape Equivalence and traces on $C^*$-algebras.}
J.~Funct.~Anal. {\bf 33} (1979), no. 2, 135--164.

 
\bibitem{DT}
M.~Dadarlat and A.~Toms,
{\itshape Ranks of operators in simple $C^*$-algebras.} 
J. Funct. Anal. {\bf 259} (2010), 1209--1229.
 


\bibitem{eglnkk0} 
G.~Elliott, G.~Gong, H.~Lin and Z.~Niu, 
{\itshape The classification of simple separable KK-contractible \CA s with finite nuclear dimension.}  
J. Geom. Phys. {\bf 158} (2020), 103861, 51 pp.

\bibitem{EHT}
G.~Elliott, T.~Ho, and  A.~Toms,  
{\itshape A class of simple  $C^*$-algebras with stable rank one.} 
J.~Funct.~Anal. {\bf 256} (2009), no. 2, 307--322. 

\bibitem{EN} G.~Elliott and Z.~Niu,    
{\itshape On the small boundary property and Z-absorption, II.}   preprint, arXiv: 2504.03611v1. 


\bibitem 
{ERS}
G.~Elliott, L.~Robert, and L.~Santiago, 
{\itshape The cone of lower  semicontinuous traces on a  $C^*$-algebra.} Amer. J. Math \textbf{133} (2011),   969--1005.

\bibitem{ET26}
S.~Evington, A.~Tikuisis,
{\itshape The real and stable rank of tracially complete $C^*$-algebras,} 
preprint,  arXiv:2604.24206. 

\bibitem{Fu2025}
X.~Fu, 
{\itshape From stable rank one to real rank zero: a note on tracial approximate oscillation zero.}
preprint, arXiv: 2512.23911. 

\bibitem{FLL21}
X.~Fu, K.~Li, and H.~Lin, 
{\itshape Tracial approximate divisibility and stable rank one.}  
J. London Math. Soc. {\bf 106} (2022), 3008--3042.


\bibitem{FL2020}
X.~Fu and H.~Lin,  
{\itshape  Tracial approximation in simple $C^*$-algebras.}
 Canadian Journal of Mathematics, Volume {\bf 74} , Issue 4 , August 2022 , pp. 942--1004. 


\bibitem{FL2022}
X.~Fu and H.~Lin,   
{\itshape Nonamenable simple $C^*$-algebras with tracial approximation.} (English summary) Forum Math. Sigma {\bf 10} (2022), Paper No. e14, 50 pp. 

\bibitem 
{FLosc}
X.~Fu and H.~Lin,   
{\itshape Tracial oscillation zero and stable rank one.}
Canad. J. Math. {\bf 77} (2025), no. 2, 563--630.

\bibitem{Haa} 
U.~Haagerup, 
{\itshape Quasitraces on exact $C^*$-algebras are traces.} 
C. R. Math. Rep. Acad. Sci. Canada Vol. {\bf 36} (2-3) 2014, pp. 67--92. 

\bibitem{HO}
I. Hirshberg and J. Orovitz, 
{\itshape Tracially ${\cal Z}$-absorbing $C^*$-algebras.} 
J. Funct. Anal. {\bf 265} (2013), 765--785. 


\bibitem{LN} 
C.~Li and Z.~Niu. 
{\itshape Stable rank of $C(X)\rtimes \Gamma$.}
{{preprint, arXiv:2008.03361v2. 2020.}} 


\bibitem{Lin07}
H.~Lin,
{\itshape Simple nuclear $C^*$-algebras of tracial topological rank one.} 
J. Funct. Anal. {\bf 251} (2007), 601--679. 

\bibitem 
{LinJFA}
H.~Lin,  
{\itshape Strict comparison and stable rank one.} 
J. Funct. Anal. {\bf 289} (2025), no. 9, Paper No. 111065, 25 pp. 

\bibitem
{LinAdv}
H.~Lin,  
{\itshape Tracial oscillation zero and ${\cal Z}$-stability.} 
Adv. Math. {\bf 439} (2024), Paper No. 109462, 51 pp. 

\bibitem{Lor}
T.~A.~Loring, 
{\itshape Lifting solutions to perturbing problems in  $C^*$-algebras.} 
Fields Inst. Monogr., {\bf 8}.
American Mathematical Society, Providence, RI, 1997, x+165 pp.
ISBN: 0-8218-0602-5. 

\bibitem{Pedbk}  G. K. Pedersen, 
{\itshape  $C^*$-algebras and their automorphism groups.}
 London Mathematical Society Monographs, 14. Academic Press, Inc. London/New York/San Francisco, 1979.




\bibitem{RobRor}
L.~Robert and M.~R{\o}rdam,
{\itshape Divisibility properties for $C^*$-algebras.}
Proc. Lond. Math. Soc., 
vol.~{\bf 106}, no.~6 (2013), 1330--1370. 

\bibitem 
{RorUHF1}  
M.~R{\o}rdam, 
{\itshape On the structure of simple $C^*$-algebras tensored with a UHF-algebra.} 
J. Funct. Anal. {\bf 100} (1991), 1--17. 


\bibitem 
{RorUHF2}  
M.~R{\o}rdam, 
{\itshape On the structure of
simple $C^*$-algebras tensored with a UHF-algebra, II.} 
J.~Funct.~Anal. {\bf 107} (1992), 255--269.



\bibitem{T20}
H.~Thiel, 
{\itshape Ranks of operators in simple $C^*$-algebras with stable rank one.} 
Comm. Math. Phys. {\bf 377} (2020), no. 1, 37--76. 


\bibitem{T14} A.~Tikuisis, 
{\itshape Nuclear dimension, $\mathcal{Z}$-stability, and algebraic simplicity for stably projectionless $C^*$-algebras.}
Math.~Ann. (2014) {\bf 358}: 729--778. 


\bibitem{Vacc} A.~Vaccaro,
{\itshape Stable rank one, tracial local homogeneity and uniform property $\Gamma.$}
preprint, arXiv:2604.24682v2

\bibitem{V97}
J.~Villadsen, 
{\itshape Simple $C^*$-algebras with perforation.} 
J. Funct. Anal. {\bf 154} (1998), no. 1, 110--116.

\bibitem{W12pure} W.~Winter,
{\itshape Nuclear dimension and $\mathcal{Z}$-stability of pure $C^*$-algebras.}
Invent. Math. {\bf 187} (2012), no.~{2}, 259--342. 

\bibitem{WZ09OZ} W.~Winter and J.~Zacharias,
{\itshape Completely positive maps of order zero.} M{\"u}nster J. Math. {\bf 2} (2009), 311--324. 


\bibitem{Z91}
S.~Zhang,
{\itshape Matricial structure and homotopy type of simple 
$C^*$-algebras with real rank zero.} 
J.~Operator Theory {\bf 26} (1991), no.~2, 283--312. 

\end{thebibliography}
\end{document}